\documentclass[11pt]{article}
\usepackage{graphicx}
\usepackage{amsmath}
\usepackage{amsfonts}
\usepackage{subcaption}
\usepackage{footmisc}
\usepackage{algorithm}
\usepackage[noend]{algpseudocode}
\algrenewcommand\algorithmicdo{}
\algrenewcommand\algorithmicthen{}
\usepackage{booktabs}
\usepackage{todonotes}
\usepackage{bm}
\usepackage{xspace}
\usepackage{xcolor}
\usepackage[normalem]{ulem}
\usepackage{stmaryrd}
\usepackage{thm-restate}
\usepackage{pifont}
\usepackage{wrapfig}
\usepackage{oubraces}
\usepackage{nicefrac}
\allowdisplaybreaks

\renewcommand{\phi}{\varphi}

\usepackage{qtree}

\usepackage[normalem]{ulem}

\usepackage{microtype}

\usepackage[margin=1in]{geometry}
\usepackage{amsmath,amssymb,amsthm,mathtools,microtype}
\usepackage{needspace}
\usepackage[hidelinks]{hyperref}

\newtheorem{theorem}{Theorem}

\newtheorem{lemma}[theorem]{Lemma}

\DeclareMathOperator{\Span}{span}

\newcommand{\defeq}{\stackrel{\text{def}}{=}}
\newcommand{\R}{\mathbb R} %
\newcommand{\Rp}{{\mathbb R}_{+}} %

\newcommand{\calP}{\mathcal P}

\newcommand{\eat}[1]{}

\renewcommand{\epsilon}{\varepsilon}

\newcommand{\set}[1]{\{#1\}}                    %
\newcommand{\linw}{\mathit{linw}  }     %
\newcommand{\entw}{\mathit{entw}}       %
\newcommand{\adw}{\mathit{adw}}         %

\newcommand{\ceil}[1]{\lceil#1\rceil}

\usetikzlibrary{fit}
\usetikzlibrary{calc}
\usetikzlibrary{trees}

\newcommand{\subw}{\operatorname{subw}}
\newcommand{\normw}{\operatorname{normw}}

\newcommand{\TD}{\mathrm{TD}}
 \usepackage{authblk}

\renewcommand{\adw}{\operatorname{adw}}
\renewcommand{\linw}{\operatorname{linw}}
\renewcommand{\entw}{\operatorname{entw}}
\newcommand{\hwd}{\operatorname{w}}
\newcommand{\ZY}{\operatorname{ZY}}
\newcommand{\Ing}{\operatorname{Ing}}
\newcommand{\xs}[1]{X_{#1}}   %

\usepackage{tikz}
\usetikzlibrary{positioning}
\definecolor{grpI}{HTML}{28618D}
\definecolor{grpII}{HTML}{B3532B}
\definecolor{grpIII}{HTML}{7453A3}
\definecolor{grpIV}{HTML}{287F71}
\definecolor{grpV}{HTML}{A63766}
\definecolor{grpN}{HTML}{5A5A5A}   %
\tikzset{
  mainbag/.style={draw=black!55,rounded corners=3pt,align=left,
                  inner xsep=6pt,inner ysep=4pt,font=\small,
                  fill=black!3,text width=62mm},
  maxbag/.style={mainbag,draw=black!80,line width=.9pt,fill=black!8},
  leafbag/.style={draw=#1!85!black,rounded corners=2pt,align=center,
                  inner sep=2pt,font=\scriptsize,fill=#1!8,
                  text=#1!75!black,minimum width=18mm},
  leafbag/.default=grpN,
  tlink/.style={draw=black!55,line width=.7pt},
  bnote/.style={font=\scriptsize,color=black!70,align=left}
}

\title{Separating Notions of Graph Width: the Adaptive, Normal, Linear,
  Entropic, and Submodular Width}
\author[1]{Matthias Lanzinger}
\author[2,1]{Timo Camillo Merkl}
\author[3]{Dan Suciu}
\affil[1]{TU Wien, Austria}
\affil[2]{Max Planck Institute for Software Systems, Germany}
\affil[3]{University of Washington, United States}

\date{}

\begin{document}
\maketitle
\vspace{-1.2em}

\begin{abstract}
  We describe one explicit simple graph $G$ on $32$ vertices whose
  adaptive, normal, linear, entropic, and submodular widths are pairwise
  distinct.  We compute all these widths exactly, except for the
  entropic width, where we only give a lower and upper bound.  We use
  Ingleton's inequality~\cite{ingleton1971representation} and the
  Zhang--Yeung inequality~\cite{zhang1998characterization} for upper
  bounds, and give explicit constructions of modular, normal, linear,
  entropic, and non-entropic polymatroids for lower bounds.
\end{abstract}

\section{Introduction}

This work is motivated by the decision problem for graph pattern
matching: given a pattern graph $G$ and database graph $D$, check if
$D$ contains a homomorphic image of $G$.  Since the introduction of
the submodular width of a graph by
Marx~\cite{DBLP:journals/jacm/Marx13}, variations of this notion have
been used to prove both
upper~\cite{DBLP:journals/theoretics/KhamisNS25} and lower
bounds~\cite{DBLP:journals/pacmmod/FanKZ24,DBLP:conf/icalp/FanKZ23} on
the complexity of graph pattern matching.  For example, a pattern $G$
with submodular width $\subw$ can be checked in time
$\tilde O(N^{\subw})$, where $N$ is the size of the database graph.

However, there are several variations of the ``width'' notion that one
could consider.  The original definition of $\subw$ by Marx used a
\emph{polymatroid}, which is a monotone and submodular set-function on
the vertices of the pattern graph.  Bounds obtained from polymatroids
can be converted into an algorithm.  On the other hand, circuit lower
bounds require the more stringent notion of \emph{entropic
  width}~\cite{DBLP:journals/pacmmod/FanKZ24}, while fine-grained
complexity lower
bounds~\cite{DBLP:conf/icalp/FanKZ23,DBLP:conf/stoc/BringmannG25}
require the much stricter notion of \emph{clique embedding}, which in
turn is a restriction of the \emph{normal width}.

The question we address here is whether these notions of width
coincide. A separation of the ``polymatroid bound'' and the
``entropic bound'' was proved in~\cite{DBLP:conf/pods/Khamis0S17}
for bounds on the number of matches of a hypergraph pattern under
cardinality constraints and functional dependencies. These bounds
optimize the value of a polymatroid on the full set of variables.
The widths studied here additionally involve minimizing the maximum
bag value over tree decompositions, so a separation of those bounds
does not directly imply a separation of widths. Moreover, we seek
a separation for graph patterns under edge domination alone,
without functional dependencies and cardinality constraints.

Thus, it was an open question whether these variations of width
coincide for graphs.  When $G$ is a cycle of length $k$, then its
submodular width is $2-1/\ceil{k/2}$, and this width is attained by
using a \emph{normal polymatroid}.  Thus, for graphs $G$ that are a
simple cycle, the \emph{normal width} coincides with the
\emph{entropic width}, and with the \emph{submodular width}.  This
naturally raises the question whether these notions are equal for all
graphs $G$.

However, the ``boat query''~\cite{DBLP:conf/icalp/FanKZ23}, which is a
graph pattern with 8 nodes originally studied
in~\cite{DBLP:conf/icdt/JoglekarR16}, has a submodular width of 2,
which is strictly larger than the normal width.  Thus, equality of
normal- and submodular width fails for the boat query.  However, for
this query, the polymatroid witnessing $\subw=2$ is a \emph{linear
  polymatroid}, meaning that it can be realized by the rank function
in a vector space (definitions in Section~\ref{sec:basic}).  This, in
turn, raised the natural question whether the submodular width
coincides with the linear width, for every graph $G$.

We answer this question negatively.  We describe a single graph $G$
with 32 vertices whose adaptive, normal, linear, entropic, and submodular
widths are pairwise distinct.  We compute all these widths exactly,
except for the entropic width, for which we only provide a lower and
upper bound (which are sufficient to separate it from the other widths).

Our results were obtained with significant assistance from AI Agents:
we used both OpenAI's ChatGPT and Anthropic's Claude (different
authors used different agents).  We did not obtain these results in a
single shot, but instead went through a long journey until we arrived
at the current results.  We have entirely rewritten the AI-generated
output, making it human-readable, and highlighted the important
lessons carried by this construction.  In particular, we identify
clearly the role of Ingleton's
inequality~\cite{ingleton1971representation}, and of the Zhang--Yeung
inequality~\cite{zhang1998characterization} in these separation
results, and describe a general class of non-entropic polymatroids
(denoted $f_G$ in Section~\ref{sec:three:special}), all of which are
of independent interest.  By necessity, some tedious calculations
remain, but they are clearly isolated in the paper, are verified by
the \texttt{verify.py} script accompanying this paper, and their role
in the big picture is clearly explained.

\paragraph{Structure of the Paper.}
Section~\ref{sec:basic} introduces the classes of polymatroids.
Section~\ref{sec:widths} defines the corresponding graph widths and
states our main result.  Section~\ref{sec:inequalities} presents the
inequalities used for the upper bounds, and
Section~\ref{sec:three:special} gives examples that prepare the
lower-bound constructions.  Section~\ref{sec:graph} introduces the
32-vertex graph that witnesses the separation of the width notions.
It consists of a central graph on five vertices and scaffolding that
connects the bounds on the central graph to the widths of the full
graph.  Section~\ref{sec:core-bounds} establishes these central bounds,
and Section~\ref{sec:big-graph} uses them to prove the width separations.
Section~\ref{sec:conclusions} concludes with open problems.

\section{Classes of Polymatroids}
\label{sec:basic}

Fix a set of $n$ variables $\bm X = \set{X_1, \ldots, X_n}$.  A
\emph{polymatroid} is a function $h : 2^{\bm X} \rightarrow \Rp$ that
satisfies for all $U,V\subseteq \bm X$:
\begin{align}
  h(\emptyset) = & 0 \label{eq:normalized} \\
  h(U \cup V) \geq & h(U) \label{eq:monotone} \\
  h(U) + h(V) \geq & h(U\cup V) + h(U \cap V) \label{eq:submodular}
\end{align}
The second and third conditions are called \emph{monotonicity} and
\emph{submodularity}.  We write $\Gamma_n$ for the set of polymatroids
over $n$ variables, or, alternatively, $\Gamma_{\bm X}$ to emphasize
that the set of variables is $\bm X$.  This set is a cone: if $h$ is a
polymatroid and $c\geq 0$, then $c\cdot h$ is also a polymatroid.  We
consider several subsets of $\Gamma_n$:

\begin{description}
\item[Modular Polymatroids.] A polymatroid $h$ is \emph{modular} if
  equality holds in \eqref{eq:submodular} for all $U,V\subseteq\bm X$.
  Equivalently, there exist weights $w_i\geq0$ such that
  \[
    h(U)=\sum_{X_i\in U}w_i
    \qquad\text{for every }U\subseteq\bm X.
  \]
  The weights are the singleton values $w_i=h(X_i)$.  We write $M_n$
  for the set of modular polymatroids.

\item[Normal Polymatroids.] If $K \subseteq \bm X$, then we call the
  indicator function $h^K(U) := \bm 1_{U\cap K \neq \emptyset}$ a
  \emph{step function}.  Every step function is a polymatroid.  A
  \emph{normal polymatroid} is a nonnegative linear combination of step
  functions.  We write $N_n$ for the set of normal polymatroids.

  Normal polymatroids have the following equivalent description, up to
  closure.  Fix a set $\Omega$.  A \emph{coverage function} is
  $\varphi : \bm X \rightarrow \calP_{\text{fin}}(\Omega)$.  If
  $U \subseteq \bm X$ then we write $\varphi(U)$ for the union
  $\bigcup_{X_i \in U}\varphi(X_i)$.  Every function of the form
  $h(U) := c\cdot |\varphi(U)|$ with $c \geq 0$ is a normal
  polymatroid: it equals
  $\sum_{\emptyset \ne K \subseteq \bm X}c\,m_K\,h^K$, where $m_K$ is
  the number of $\omega \in \Omega$ with
  $\set{X_i \mid \omega \in \varphi(X_i)}=K$.  Conversely, a normal
  polymatroid $\sum_K m_K h^K$ is of this form exactly when the
  coefficients $m_K$ are commensurable (i.e.\ their ratio is a rational
  number), and in general it is a limit of such functions.  Since
  $\hwd_h(G)$ is continuous and positively homogeneous in $h$, the two
  descriptions give the same value of $\normw(G)$, and we use them
  interchangeably.  Every normal witness in this paper has rational
  coefficients, so it is literally of the form $c\cdot|\varphi(\cdot)|$.

\item[Linear Polymatroids.] Consider a vector space $\R^d$ and let
  $\varphi : \bm X \rightarrow \calP_{\text{fin}}(\R^d)$ be a coverage
  function.  The function $r : 2^{\bm X} \rightarrow \Rp$ defined by:
  \begin{align*}
    r(U) := & \dim(\Span(\varphi(U)))
  \end{align*}
  is called a \emph{rank function}.  One can check that $r$ is a
  polymatroid.  A polymatroid is called \emph{linear} if it is a limit
  of nonnegative scalar multiples of rank functions.  We write
  $\text{Lin}_n$ for the set of linear polymatroids.

\item[Entropic and Almost Entropic Polymatroids.] Let $(X_i)_{i=1}^{n}$
  be a set of joint random variables, and set $h(U) := H(U)$ the
  entropy of the subset $U \subseteq \bm X$.  Then $h$ is a
  polymatroid, called an \emph{entropic polymatroid}.  The set of
  entropic polymatroids is denoted $\Gamma_n^*$.  Its topological
  closure is denoted $\bar \Gamma_n^*$, and its elements are called
  \emph{almost entropic functions}.
\end{description}

It is known that
$
M_n \subseteq N_n \subseteq \text{Lin}_n \subseteq \bar\Gamma^*_n
\subseteq \Gamma_n,
$
with all inclusions strict for $n\ge4$.

\section{Graph Widths and the Separation Theorem}
\label{sec:widths}

All graphs are finite, simple, and have no isolated vertices.
A \emph{tree decomposition} of a
graph $G=(V,E)$ is a tree $T$ together with bags $B_t\subseteq V$
($t\in V(T)$) such that every edge of $G$ is contained in some bag and,
for every $x\in V$, the set $\{t \mid x\in B_t\}$ is nonempty and
connected in $T$.  We write $\TD(G)$ for the set of tree
decompositions of $G$, and for $h:2^V\to\Rp$ we put
\[
\hwd_h(G):=\min_{(T,B)\in\TD(G)}\ \max_{t\in V(T)}h(B_t).
\]
A function $h:2^V\to\Rp$ is \emph{edge dominated} on $G$ if $h(e)\le1$
for every $e\in E$.  We write $ED(G)$ for the edge dominated functions
on $G$, and define
\begin{align*}
\adw(G):=&\sup_{h\in M_n \cap ED(G)} \hwd_h(G) &&\text{adaptive width}\\
\normw(G):=&\sup_{h\in N_n \cap ED(G)} \hwd_h(G) &&\text{normal width}\\
\linw(G):=&\sup_{h\in\text{Lin}_n \cap ED(G)} \hwd_h(G) &&\text{linear width}\\
\entw(G):=&\sup_{h\in\bar\Gamma_n^* \cap ED(G)} \hwd_h(G) &&\text{entropic width}\\
\subw(G):=&\sup_{h\in\Gamma_n \cap ED(G)} \hwd_h(G) &&\text{submodular width}
\end{align*}
Since
$M_n\subseteq N_n\subseteq\text{Lin}_n\subseteq\bar\Gamma^*_n\subseteq\Gamma_n$
we always have
\[
\adw(G)\le\normw(G)\le\linw(G)\le\entw(G)\le\subw(G).
\]
Edge domination can always be enforced by normalization.
For any nonzero polymatroid $h$, let
$c_h:=\max_{e\in E}h(e)$. Since $G$ has no isolated vertices,
$c_h>0$: otherwise monotonicity forces every singleton value to
vanish, and subadditivity gives $h=0$. The polymatroid $h/c_h$
is edge dominated. Since $\hwd_h(G)$ is positively homogeneous
in $h$, for every nontrivial class $\mathcal C$ of polymatroids
closed under multiplication by positive scalars,
\[
\sup_{h\in\mathcal C\cap ED(G)}\hwd_h(G)
=
\sup_{h\in\mathcal C\setminus\{0\}}
\frac{\hwd_h(G)}{\max_{e\in E}h(e)}.
\]
Thus we may construct a polymatroid first and normalize it afterwards.

By continuity, taking the closures in the definitions of linear
and entropic polymatroids does not change the normalized suprema.
Furthermore, since $G$ has no isolated vertices, edge domination
implies $h(U)\le|U|$ for every $U\subseteq V(G)$.
The five classes used in the width definitions are closed, so their
edge dominated subsets are compact. Continuity of $\hwd_h(G)$
therefore shows that all five suprema are attained.

\begin{theorem}\label{thm:main}
  There is a graph $G$ on $32$ vertices such that
\[
\boxed{\begin{aligned}
\adw(G)=6<\normw(G)=\frac{69}{11}<\linw(G)=\frac{44}{7}
<\frac{635}{101}\le\entw(G)
\le\frac{183}{29}<\subw(G)=\frac{19}{3}.
\end{aligned}}
\]
\end{theorem}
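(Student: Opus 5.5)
The plan follows the structure announced in the introduction. First I will fix a small central graph $C$ on five vertices. For $C$ I will identify the relevant separating quantities: specific linear combinations of values $h(U)$, or bag values of a few canonical tree decompositions of $C$. The key requirement is that each quantity behaves differently on the five polymatroid classes. The modular, normal and linear optima are determined by the Shannon inequalities plus Ingleton's inequality $\Ing\ge 0$, which holds on $\text{Lin}_n$. The almost-entropic optimum is constrained by the Zhang--Yeung inequality $\ZY\ge0$, which holds on $\bar\Gamma^*_n$ but not on $\Gamma_n$. I will then build the 32-vertex graph $G$ by attaching to $C$ scaffolding gadgets: cliques or complete bipartite pieces glued to subsets of the five central vertices. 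Their role is to force every tree decomposition of $G$ into one of a few shapes. It also ensures that the maximal bag value equals a fixed affine expression in the values of $h$ on subsets of the central vertices, plus constants contributed by the scaffolding. This reduces each width of $G$ to an optimization over polymatroids on $C$ (after marginalising the scaffolding), namely a small linear program restricted to the respective cone.

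For the upper bounds, I will take each candidate tree decomposition of $G$ and show that for every $h$ in the class, some decomposition has all bags of value at most the claimed bound. Concretely, I will exhibit a nonnegative combination of Shannon inequalities, edge-domination constraints $h(e)\le1$, and (for $\linw$) Ingleton or (for $\entw$) Zhang--Yeung instances. The combination must show that the weighted sum of the maximal bag values of the decompositions is at most the bound times the sum of the weights; this is the standard LP-duality certificate. The bound $\subw\le 19/3$ uses Shannon only. $\linw\le 44/7$ adds Ingleton, $\entw\le183/29$ adds Zhang--Yeung, and $\normw\le 69/11$ and $\adw\le6$ use the stronger structure of normal and modular functions. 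For normal functions it suffices to reason about step functions $h^K$ weighted by $m_K$, via a fractional covering argument. For modular functions it reduces to fractional vertex weights, i.e.\ a fractional hypertree-type argument. For the lower bounds, I will construct explicit witnesses and normalise them as allowed above:
\begin{itemize}
\item a modular witness with weights chosen by fractional edge packing;
\item a normal witness given by a coverage function with rational multiplicities;
\item a linear witness given by vectors in some $\R^d$, with Ingleton tight on the central five;
\item an entropic witness built from the special almost-entropic (or actually entropic) polymatroids of Section~\ref{sec:three:special}, achieving $635/101$;
\item a non-entropic polymatroid $f_G$ violating Zhang--Yeung, achieving $19/3$.
\end{itemize}
Each witness is extended to the scaffolding vertices, for instance by independent uniform variables or fresh basis vectors, so that the class membership is preserved. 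For each one I will verify that every tree decomposition has a bag of value at least the claimed width.

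The lower bound requires a lower bound on $\min_{T}\max_t h(B_t)$ over all tree decompositions. I will handle this by showing that any tree decomposition of $G$ must contain a bag containing one of finitely many \emph{unavoidable} vertex sets. These sets are forced by the clique gadgets, via the Helly property of subtrees, and it then suffices to evaluate $h$ on them. Finally, the chain of strict inequalities is arithmetic: $6<69/11<44/7<635/101<183/29<19/3$.

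The main obstacle I expect is the gap between linear and entropic, together with the matching upper bound $183/29$. Finding an almost-entropic polymatroid that beats every Ingleton-satisfying one by the right margin, while remaining edge dominated, requires a carefully tuned construction, for example a mixture of the Vámos-like or four-atom distributions with a normal part. The Zhang--Yeung certificate likewise needs the right choice of which variables play which roles. I would first solve the LPs numerically with the relevant inequalities added, read off the optimal duals, and then rationalise them into the exact certificates.
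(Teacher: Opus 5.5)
Your overall architecture matches the paper's: a five-vertex core, scaffolding, a Helly-type argument for the lower bounds, Shannon/Ingleton/Zhang--Yeung certificates for the upper bounds, and one witness per class. But the statement is existential with five exact numbers, so the proof consists of the concrete content, and none of it is here. You never fix the graph. ``Solve the LPs numerically and rationalise the duals'' is a search procedure, and for the entropic lower bound it is not even an LP: $\bar\Gamma^*_X$ is not polyhedral, so $635/101$ has to come from an explicit distribution. The missing ingredients are these.
\begin{itemize}
\item The core $G_0=K_5-\set{15,25,34}$. Its minimal vertex sets contained in no edge are $U_1,\dots,U_5=\xs{123},\xs{124},\xs{34},\xs{15},\xs{25}$, so the quantity to optimise is $M(h)=\min_i h(U_i)$. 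This is a minimum of five values, not an affine expression.
\item The class-specific bounds on $M$, giving $1$, $14/11$, $9/7$, $38/29$, $4/3$. They come from the modular identity $h(U_3)+h(U_4)+h(U_5)=h(\xs{12})+h(\xs{35})+h(\xs{45})$, the step-function inequality \eqref{eq:normal-core}, and the identity \eqref{eq:shannon-core} closed off in one of three ways. Those are one Ingleton instance at $(X_1,X_2;X_3,X_4)$, Zhang--Yeung in both orders $(X_1,X_2;X_3,X_4)$ and $(X_1,X_2;X_4,X_3)$, or nothing, each plus Shannon terms for $U_4,U_5$.
\item The witnesses, reaching $1$, $14/11$, $9/7$, $130/101$, $4/3$. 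They are $|B|/2$, a combination of ten step functions, a subspace arrangement in $\R^{18}$, a symmetrised mixture of the AND/OR distribution $h_0$ with four linear polymatroids, and $f_{G_0}/3$.
\end{itemize}
Your sentence that the modular, normal and linear optima are determined by Shannon plus Ingleton is also wrong. A common constraint set gives a common optimum, contradicting $6<69/11<44/7$. Indeed \eqref{eq:normal-core} fails for the linear witness ($198>196$), so the normal bound needs the step-function structure you only invoke later.

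The scaffolding is where your sketch is not merely incomplete but points the wrong way. For the scaffolding to add a fixed amount on top of $g_0(U_i)$, any scaffolding vertex $q$ adjacent to a central vertex $x$ must be light: $h(q)\le 1-g_0(x)$, with $g_0(x)$ up to $2/3$ or $3/4$ in the witnesses. If the critical bag is forced to contain heavy scaffolding vertices adjacent to $X$, the uniform modular $|B|/2$ profits most. It gives such a vertex $1/2$, where a witness with central singletons near $2/3$ can give only $1/3$, and the widths tend to collapse onto $\adw$. Cliques glued to central vertices sit squarely in this tension, and you give no way out.

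The paper's way out is a two-layer design.
\begin{itemize}
\item Anchors forming $K_{2,2,2,2,2}$, \emph{not} adjacent to $X$. Each is worth $1/2$ in the witness, and they total at most $5$ for every edge-dominated $h$, by a fractional edge cover.
\item Connectors, $(r_1,\dots,r_5)=(3,3,3,4,4)$ of them, joined to $A_i\cup U_i$ and worth $1/r_i$ each. The singleton budgets $2/3$ and $3/4$ are then met exactly, and a bag pays for connectors only when it omits an anchor.
\end{itemize}
Two lemmas then do the transfer you leave unspecified. First, take $i$ minimising $h(U_i)$ and the path $C_1=A\cup U_i$, $C_2=(A\setminus A_i)\cup U_i\cup U_j$, $C_3=(A\setminus(A_i\cup A_j))\cup X$ with connector leaves. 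This gives $\hwd_h(G)\le\max(6,5+M(h))$, since the full anchor set shares a bag only with the minimising $U_i$. Second, the bramble $\set{X\setminus e:e\in E_0}\cup\set{K_{a,k}}$ has members that pairwise touch precisely because connectors of group $i$ are adjacent to $U_i$. By Helly it yields a bag containing some $U_i$ and meeting every $K_{a,k}$, of value $\ge5+M(g_0)$ under the extended witness.

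This also fixes your certificate formulation. A bound on a weighted sum of \emph{maximal} bag values is not linear in $h$; in general one needs a linear certificate for each choice of one bag per decomposition. Here it collapses to a single inequality in the $h(U_i)$ only because every bag other than $C_1$ is at most $6$ for all edge-dominated $h$.
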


The graph $G$ mentioned in the theorem is defined in Section~\ref{sec:graph} and shown in
Figure~\ref{fig:graph32}.
In particular, for this graph
\[
\adw(G)<\normw(G)<\linw(G)<\entw(G)<\subw(G).
\]
Notice that the theorem gives exact values for four out of the five
widths.  We leave open the exact value of $\entw(G)$, but notice that
it belongs to the interval $[635/101,\,183/29]$.

\section{Linear Inequalities}
\label{sec:inequalities}

We consider linear inequalities of the form
$\sum_{U \subseteq \bm X}c_U h(U) \geq 0$.  Often we express these
inequalities in terms of the following quantities:
\begin{align*}
H_h(A\mid C)=& h(AC)-h(C) & I_h(A;B\mid C)=&h(AC)+h(BC)-h(ABC)-h(C)
\end{align*}

Throughout this section $A,B,C,X,Y$ denote arbitrary subsets of
$\bm X$.
From Section~\ref{sec:core-bounds} on, those inequalities are only
ever instantiated at single central variables.

\begin{description}
\item[Inequalities in $\Gamma_n$.] An inequality satisfied by all
  polymatroids is called a \emph{Shannon inequality}.
  Inequalities~\eqref{eq:normalized}, \eqref{eq:monotone}, and
  \eqref{eq:submodular} are the \emph{Basic Shannon Inequalities}.
  Consider a graph $G = (V(G), E(G))$, and a set of weights $w_e\ge0$
  for $e\in E$.  Fix a set $S \subseteq V(G)$ such that
  $\sum_{e\ni v}w_e\ge 1$ for every $v\in S$.  Then we say that $w$ is
  a \emph{fractional edge cover} of $S$, and one can prove that the
  following is a Shannon inequality:
\begin{align}\label{eq:cover}
  h(S) \le {} & \sum_e w_e\,h(e)
\end{align}
\item[Inequalities in $\bar \Gamma_n^*$.] If an inequality holds for
  all entropic functions $h \in\Gamma_n^*$, then it also holds for all
  almost entropic functions $h \in \bar\Gamma_n^*$.  For that reason
  we consider only the latter in our discussion. The following is
  called the Zhang--Yeung inequality~\cite{zhang1998characterization}
  and holds for all $h \in \bar \Gamma_n^*$:
\begin{align}
I_h(X;Y)  \le {} &I_h(X;Y\mid A)+I_h(X;Y\mid B)+I_h(A;B)\notag\\
               &{}+I_h(X;Y\mid A)+I_h(A;Y\mid X)+I_h(A;X\mid Y).
\label{eq:zy}
\end{align}
The term $I_h(X;Y\mid A)$ occurs twice, and we have deliberately not
collected the two copies into a single $2I_h(X;Y\mid A)$: the first
line of \eqref{eq:zy} is precisely Ingleton's inequality
\eqref{eq:ing} below, and the second line is the surplus that
\eqref{eq:zy} carries over it, as recorded in \eqref{eq:zy-vs-ing}.
Note also that \eqref{eq:zy} is not symmetric in $A$ and $B$; this is
why Lemma~\ref{lem:zy-core} has to apply it twice, once in each order.
We write $\ZY_h(X,Y;A,B)$ for the slack of \eqref{eq:zy}, i.e.\ the right-hand side
minus the left-hand side of \eqref{eq:zy}, so that
\eqref{eq:zy} reads $\ZY_h(X,Y;A,B)\ge0$.  We note that no general
characterization of all entropic inequalities is known.

\item[Inequalities in $\text{Lin}_n$.] The following inequality holds
  for all linear polymatroids $h \in \text{Lin}_n$, and is called
  \emph{Ingleton's inequality}~\cite{ingleton1971representation}:
\begin{align}\label{eq:ing}
  I_h(X;Y) \le {} & I_h(X;Y\mid A)+I_h(X;Y\mid B)+I_h(A;B).
\end{align}
The inequality holds for rank functions, and hence for their
nonnegative scalar multiples and limits by continuity.
We write $\Ing_h(X,Y;A,B)$ for the slack, i.e.\ the right-hand side
minus the left-hand side of \eqref{eq:ing}, so that \eqref{eq:ing}
reads $\Ing_h(X,Y;A,B)\ge0$.  We note that no general
characterization of all linear rank inequalities is known; see
\cite{DBLP:journals/corr/abs-0910-0284} for the state of the art on
five or more variables.

\item[Inequalities in $N_n$.] Write
  $\check h(A) := h(\bm X)-h(\bm X\setminus A)$.  For any nonempty
  $K \subseteq \bm X$, a normal polymatroid $h \in N_n$ satisfies:
  \begin{align}
    \sum_{A \subseteq K}(-1)^{|K \setminus A|}\check h(A) \geq {} & 0
    \label{eq:normal:polymatroids}
  \end{align}
  Indeed, for $h=c\cdot|\varphi(\cdot)|$ the left-hand side is $c$
  times the number of $\omega\in\Omega$ lying in $\varphi(X_i)$ for
  exactly the $X_i\in K$, so \eqref{eq:normal:polymatroids} just says
  that these counts are nonnegative.  One can also show that these
  inequalities completely characterize the normal polymatroids: $h \in
  \Gamma_n$ is normal iff it satisfies all inequalities of the
  form~\eqref{eq:normal:polymatroids}.

\item[Inequalities in $M_n$.] Every modular polymatroid $h\in M_n$
  satisfies:
  \begin{align}
    h(A)+h(B) \le {} & h(A\cup B)+h(A\cap B).
    \label{eq:modular:polymatroids}
  \end{align}
  Together with submodularity, this inequality holds with equality.
  Conversely, a polymatroid $h\in\Gamma_n$ is modular iff it satisfies
  \eqref{eq:modular:polymatroids} for all $A,B\subseteq\bm X$.

\end{description}

It is useful to compare Equations~\eqref{eq:zy} and \eqref{eq:ing}.
$\ZY_h(X,Y;A,B) \geq \Ing_h(X,Y;A,B)$ for any polymatroid $h$, more precisely:
\begin{equation}\label{eq:zy-vs-ing}
\ZY_h(X,Y;A,B)-\Ing_h(X,Y;A,B) = I_h(X;Y\mid A)+I_h(A;Y\mid X)+I_h(A;X\mid Y).
\end{equation}

\section{Three Special Polymatroids}

\label{sec:three:special}

To separate two width notions, it is not enough to exhibit a polymatroid
belonging to one class but not the other. Instead we need a graph on which this additional freedom yields a strictly larger width. The following examples introduce the ingredients for our lower-bound constructions. The linear example illustrates a dependence pattern that normal polymatroids cannot represent. The entropic example violates Ingleton's inequality and will serve as a building block for the entropic lower bound. Finally, the
graph-induced polymatroid will supply the submodular lower bound. In the
subsequent sections, we use these ideas to construct witnesses on the
central graph, then extend those witnesses to the separating graph and
prove that every tree decomposition contains a bag of sufficiently large
value.

The following notation will be convenient. Given a set $\bm X = \set{X_1, X_2, \ldots}$ we abbreviate a subset of
them by the string of its indices, for example $\xs{123}$ stands for
$\set{X_1,X_2,X_3}$.

\paragraph{A Linear Polymatroid.} Consider the following coverage function
from $\set{X_1,X_2,X_3}$ to vectors in the vector space $\R^2$:
\begin{align*}
  \varphi(X_1)= & \set{(1,0)} & \varphi(X_2) = & \set{(0,1)} &
  \varphi(X_3)= & \set{(1,1)}
\end{align*}
Let $h$ be its rank function.  More precisely:
\begin{align*}
  h(\emptyset)=&0 & h(X_1)=h(X_2)=h(X_3)=&1 & h(\xs{12})=h(\xs{13})=h(\xs{23})=h(\xs{123})=2
\end{align*}
This is a linear polymatroid which is not normal, because it violates
inequality~\eqref{eq:normal:polymatroids} for $K=\xs{123}$:
\begin{align*}
  h(X_1)+h(X_2)+h(X_3)-h(\xs{12})-h(\xs{13})-h(\xs{23})+h(\xs{123})=&-1
\end{align*}
We will use a linear polymatroid in Section~\ref{subsec:shannon-core},
where the same argument shows that it is not a normal polymatroid.

\paragraph{An Entropic Polymatroid.} The following entropic polymatroid is a
building block for Section~\ref{subsec:zy-core} below.  Consider the
following probability distribution on 4 variables
$X_1, X_2, X_3, X_4$, computed from 2 independent fair Bernoulli random
variables $R, S$:

\begin{align*}
  &
    \begin{array}{|cc||c|c|c|c|c} \cline{1-6}
R & S &  X_1 := R\wedge S&X_2:=R\vee S&X_3:=R&X_4:=S&p \\ \cline{1-6}
0 & 0 & 0 & 0 & 0 & 0 & 1/4 \\
0 & 1 & 0 & 1 & 0 & 1 & 1/4 \\
1 & 0 & 0 & 1 & 1 & 0 & 1/4 \\
1 & 1 & 1 & 1 & 1 & 1 & 1/4 \\ \cline{1-6}
    \end{array}
\end{align*}

Let $h_0$ be the entropy function of these four variables.  For
example,
$h_0(X_1)=h_0(X_2) = \frac{1}{4}\log_2 4 + \frac{3}{4}\log_2 (4/3) =
2-\frac{3}{4}\log_2 3$, $h_0(X_3)=h_0(X_4)=1$, etc.  Notice that $h_0$
is not a linear polymatroid, because it fails Ingleton's inequality.  To
see this, instantiate \eqref{eq:ing} at $(X,Y,A,B)=(X_1,X_2,X_3,X_4)$,
which reads
\[
I_{h_0}(X_1;X_2)\ \le\
I_{h_0}(X_1;X_2\mid X_3)+I_{h_0}(X_1;X_2\mid X_4)+I_{h_0}(X_3;X_4).
\]
All three terms on the right vanish.  Indeed $I_{h_0}(X_3;X_4)=0$
because $R$ and $S$ are independent; and conditioned on $X_3=R$ one of
$X_1,X_2$ is constant, since $R=0$ forces $X_1=0$ while $R=1$ forces
$X_2=1$, so $I_{h_0}(X_1;X_2\mid X_3)=0$, and $I_{h_0}(X_1;X_2\mid
X_4)=0$ by the same argument with $S$ in place of $R$.  The left-hand
side, however, is
\[
I_{h_0}(X_1;X_2)=h_0(X_1)+h_0(X_2)-h_0(\xs{12})
 =2\Bigl(2-\frac34\log_2 3\Bigr)-\frac32
 =\frac52-\frac32\log_2 3\approx0.1226>0 .
\]
Hence $\Ing_{h_0}(X_1,X_2;X_3,X_4)\approx-0.1226<0$, so $h_0$ is not a
linear polymatroid.

\paragraph{A Polymatroid Induced by a Graph.} Fix any graph $G = (V,E)$ and
define the following function $f_G: 2^V \rightarrow \Rp$:
\begin{align}
  f_G(U) := &
            \begin{cases}
              0 & \text{when } U=\emptyset\\
              2 & \text{when } |U|=1 \\
              3 & \text{when } U \in E\\
              4 & \text{otherwise}
            \end{cases} \label{eq:function:f:g}
\end{align}
In words, $f_G$ is 2 on singleton sets, is 3 on edges, and 4
everywhere else.  One can check that $f_G$ is monotone and submodular,
thus $f_G$ is a polymatroid.  Zhang and Yeung showed that $f_G$, where
$G = K_4 - \set{34}$ (the complete graph on 4 vertices, without one
edge), does not satisfy inequality~\eqref{eq:zy}, thus $f_G$ is not
entropic: see Fig.5 in~\cite{DBLP:conf/lics/Suciu23} for a
representation of $f_G$.  The polymatroid we use in
Section~\ref{subsec:subw-core} is $f_G$ for $K_5-\set{15,25,34}$.

\section{The Separating Graph}
\label{sec:graph}

The Central Graph $G_0$ is $K_5-\set{15,25,34}$.  On this graph
we prove several upper and lower bounds for the different classes of
polymatroids.  However, $G_0$ is insufficient to separate the
corresponding notions of widths.  The Big Graph, $G$, extends $G_0$
with scaffolding that ensures that the separations on $G_0$ extend to
separations of corresponding notions of widths on $G$.

{\bf The Central Graph.} Let $G_0 = (X, E_0)$, where
\begin{align*}
  X = {}& \set{X_1, X_2, X_3, X_4, X_5} \\
  E_0={}& \set{\xs{12},\xs{13},\xs{14},\xs{23},\xs{24},\xs{35},\xs{45}}.
\end{align*}

We will often refer to the five nodes $X_1,\ldots,X_5$ as variables.
A key role will be played by the following five sets of variables:
\begin{align}
  (U_1,U_2,U_3,U_4,U_5)= {}& (\xs{123},\xs{124},\xs{34},\xs{15},\xs{25})\label{eq:the:sets:u}
\end{align}
The sets $U_i$ are exactly the minimal subsets of $X$ that are not
contained in any edge of $E_0$: there are three non-edges
$\xs{34},\xs{15},\xs{25}$, and two triangles $\xs{123}$ and
$\xs{124}$.

{\bf The Big Graph $G$.} The big graph $G$ has $G_0$ as an induced
subgraph, and is defined as follows.  Let $A_i=\set{a_{i,1},a_{i,2}}$,
$i=1,\ldots,5$, be five disjoint pairs of \emph{anchor nodes}, all
disjoint from $X$; we call each pair $A_i$ an \emph{anchor group}.  We
set $A=\bigcup_i A_i$.  Join every pair of anchors from distinct
groups, so that $G[A]$ is the complete multipartite graph
$K_{2,2,2,2,2}$ whose five parts are the anchor groups: the two
anchors of one group are \emph{non}-adjacent, while every anchor is
adjacent to all eight anchors outside its own group.  Set
\[
(r_1,r_2,r_3,r_4,r_5)=(3,3,3,4,4).
\]
For each $i=1,\ldots,5$, let $p_{i,1},\ldots,p_{i,r_i}$ be $r_i$
\emph{connector nodes}, each adjacent to every vertex of
$A_i\cup U_i$.

The graph $G$ consists of the central graph $G_0$, the anchor graph
$G[A]$, and the connector nodes and their edges.
Figure~\ref{fig:graph32} shows the complete graph.  The number of
nodes and edges are:
\begin{align*}
  |V(G)|=&5+10+17=32 \\
  |E(G)|=&7+40+74=121
\end{align*}

\begin{figure}[p]
\centering
\resizebox{\textwidth}{!}{%
  \begin{tikzpicture}[x=1cm,y=1cm,
      cnode/.style={circle,draw=black,line width=.9pt,fill=white,
                    inner sep=0pt,minimum size=7.4mm,font=\small},
      anode/.style={circle,draw=#1!80!black,fill=#1,inner sep=0pt,
                    minimum size=2.6mm},
      pnode/.style={circle,draw=#1!85!black,line width=.6pt,fill=white,
                    inner sep=0pt,minimum size=2.4mm},
      vlab/.style={font=\tiny,text=black,inner sep=.6pt,
                   fill=white,fill opacity=.5,text opacity=1},
      glab/.style={font=\small,text=#1!72!black,align=center},
    ]
    \coordinate (x1) at (54.00:1.750);
    \coordinate (x2) at (126.00:1.750);
    \coordinate (x3) at (198.00:1.750);
    \coordinate (x5) at (270.00:1.750);
    \coordinate (x4) at (342.00:1.750);
    \coordinate (a21) at (43.00:4.950);
    \coordinate (a22) at (65.00:4.950);
    \coordinate (p21) at (39.00:3.550);
    \coordinate (p22) at (54.00:3.550);
    \coordinate (p23) at (69.00:3.550);
    \coordinate (a11) at (115.00:4.950);
    \coordinate (a12) at (137.00:4.950);
    \coordinate (p11) at (111.00:3.550);
    \coordinate (p12) at (126.00:3.550);
    \coordinate (p13) at (141.00:3.550);
    \coordinate (a51) at (187.00:4.950);
    \coordinate (a52) at (209.00:4.950);
    \coordinate (p51) at (179.00:3.550);
    \coordinate (p52) at (191.67:3.550);
    \coordinate (p53) at (204.33:3.550);
    \coordinate (p54) at (217.00:3.550);
    \coordinate (a31) at (259.00:4.950);
    \coordinate (a32) at (281.00:4.950);
    \coordinate (p31) at (255.00:3.550);
    \coordinate (p32) at (270.00:3.550);
    \coordinate (p33) at (285.00:3.550);
    \coordinate (a41) at (331.00:4.950);
    \coordinate (a42) at (353.00:4.950);
    \coordinate (p41) at (323.00:3.550);
    \coordinate (p42) at (335.67:3.550);
    \coordinate (p43) at (348.33:3.550);
    \coordinate (p44) at (1.00:3.550);
    \begin{scope}[black!22,line width=.32pt]
      \draw (a21) -- (43.00:5.450) arc[start angle=43.00, end angle=115.00, radius=5.450] -- (a11); %
      \draw (a21) -- (43.00:5.472) arc[start angle=43.00, end angle=137.00, radius=5.472] -- (a12); %
      \draw (a21) -- (43.00:5.494) arc[start angle=43.00, end angle=187.00, radius=5.494] -- (a51); %
      \draw (a21) -- (43.00:5.516) arc[start angle=43.00, end angle=209.00, radius=5.516] -- (a52); %
      \draw (a31) -- (259.00:5.538) arc[start angle=259.00, end angle=403.00, radius=5.538] -- (a21); %
      \draw (a32) -- (281.00:5.560) arc[start angle=281.00, end angle=403.00, radius=5.560] -- (a21); %
      \draw (a41) -- (331.00:5.582) arc[start angle=331.00, end angle=403.00, radius=5.582] -- (a21); %
      \draw (a42) -- (353.00:5.604) arc[start angle=353.00, end angle=403.00, radius=5.604] -- (a21); %
      \draw (a22) -- (65.00:5.626) arc[start angle=65.00, end angle=115.00, radius=5.626] -- (a11); %
      \draw (a22) -- (65.00:5.648) arc[start angle=65.00, end angle=137.00, radius=5.648] -- (a12); %
      \draw (a22) -- (65.00:5.670) arc[start angle=65.00, end angle=187.00, radius=5.670] -- (a51); %
      \draw (a22) -- (65.00:5.692) arc[start angle=65.00, end angle=209.00, radius=5.692] -- (a52); %
      \draw (a31) -- (259.00:5.714) arc[start angle=259.00, end angle=425.00, radius=5.714] -- (a22); %
      \draw (a32) -- (281.00:5.736) arc[start angle=281.00, end angle=425.00, radius=5.736] -- (a22); %
      \draw (a41) -- (331.00:5.758) arc[start angle=331.00, end angle=425.00, radius=5.758] -- (a22); %
      \draw (a42) -- (353.00:5.780) arc[start angle=353.00, end angle=425.00, radius=5.780] -- (a22); %
      \draw (a11) -- (115.00:5.802) arc[start angle=115.00, end angle=187.00, radius=5.802] -- (a51); %
      \draw (a11) -- (115.00:5.824) arc[start angle=115.00, end angle=209.00, radius=5.824] -- (a52); %
      \draw (a11) -- (115.00:5.846) arc[start angle=115.00, end angle=259.00, radius=5.846] -- (a31); %
      \draw (a11) -- (115.00:5.868) arc[start angle=115.00, end angle=281.00, radius=5.868] -- (a32); %
      \draw (a41) -- (331.00:5.890) arc[start angle=331.00, end angle=475.00, radius=5.890] -- (a11); %
      \draw (a42) -- (353.00:5.912) arc[start angle=353.00, end angle=475.00, radius=5.912] -- (a11); %
      \draw (a12) -- (137.00:5.934) arc[start angle=137.00, end angle=187.00, radius=5.934] -- (a51); %
      \draw (a12) -- (137.00:5.956) arc[start angle=137.00, end angle=209.00, radius=5.956] -- (a52); %
      \draw (a12) -- (137.00:5.978) arc[start angle=137.00, end angle=259.00, radius=5.978] -- (a31); %
      \draw (a12) -- (137.00:6.000) arc[start angle=137.00, end angle=281.00, radius=6.000] -- (a32); %
      \draw (a41) -- (331.00:6.022) arc[start angle=331.00, end angle=497.00, radius=6.022] -- (a12); %
      \draw (a42) -- (353.00:6.044) arc[start angle=353.00, end angle=497.00, radius=6.044] -- (a12); %
      \draw (a51) -- (187.00:6.066) arc[start angle=187.00, end angle=259.00, radius=6.066] -- (a31); %
      \draw (a51) -- (187.00:6.088) arc[start angle=187.00, end angle=281.00, radius=6.088] -- (a32); %
      \draw (a51) -- (187.00:6.110) arc[start angle=187.00, end angle=331.00, radius=6.110] -- (a41); %
      \draw (a51) -- (187.00:6.132) arc[start angle=187.00, end angle=353.00, radius=6.132] -- (a42); %
      \draw (a52) -- (209.00:6.154) arc[start angle=209.00, end angle=259.00, radius=6.154] -- (a31); %
      \draw (a52) -- (209.00:6.176) arc[start angle=209.00, end angle=281.00, radius=6.176] -- (a32); %
      \draw (a52) -- (209.00:6.198) arc[start angle=209.00, end angle=331.00, radius=6.198] -- (a41); %
      \draw (a52) -- (209.00:6.220) arc[start angle=209.00, end angle=353.00, radius=6.220] -- (a42); %
      \draw (a31) -- (259.00:6.242) arc[start angle=259.00, end angle=331.00, radius=6.242] -- (a41); %
      \draw (a31) -- (259.00:6.264) arc[start angle=259.00, end angle=353.00, radius=6.264] -- (a42); %
      \draw (a32) -- (281.00:6.286) arc[start angle=281.00, end angle=331.00, radius=6.286] -- (a41); %
      \draw (a32) -- (281.00:6.308) arc[start angle=281.00, end angle=353.00, radius=6.308] -- (a42); %
    \end{scope}
    \begin{scope}[grpII,line width=.42pt,opacity=.8]
      \draw (p21) -- (a21);
      \draw (p21) -- (a22);
      \draw (p21) -- (x1);
      \draw (p21) -- (x2);
      \draw (p21) -- (x4);
      \draw (p22) -- (a21);
      \draw (p22) -- (a22);
      \draw (p22) -- (x1);
      \draw (p22) -- (x2);
      \draw (p22) -- (x4);
      \draw (p23) -- (a21);
      \draw (p23) -- (a22);
      \draw (p23) -- (x1);
      \draw (p23) -- (x2);
      \draw (p23) -- (x4);
    \end{scope}
    \begin{scope}[grpI,line width=.42pt,opacity=.8]
      \draw (p11) -- (a11);
      \draw (p11) -- (a12);
      \draw (p11) -- (x1);
      \draw (p11) -- (x2);
      \draw (p11) -- (x3);
      \draw (p12) -- (a11);
      \draw (p12) -- (a12);
      \draw (p12) -- (x1);
      \draw (p12) -- (x2);
      \draw (p12) -- (x3);
      \draw (p13) -- (a11);
      \draw (p13) -- (a12);
      \draw (p13) -- (x1);
      \draw (p13) -- (x2);
      \draw (p13) -- (x3);
    \end{scope}
    \begin{scope}[grpV,line width=.42pt,opacity=.8]
      \draw (p51) -- (a51);
      \draw (p51) -- (a52);
      \draw (p51) -- (x2);
      \draw (p51) -- (x5);
      \draw (p52) -- (a51);
      \draw (p52) -- (a52);
      \draw (p52) -- (x2);
      \draw (p52) -- (x5);
      \draw (p53) -- (a51);
      \draw (p53) -- (a52);
      \draw (p53) -- (x2);
      \draw (p53) -- (x5);
      \draw (p54) -- (a51);
      \draw (p54) -- (a52);
      \draw (p54) -- (x2);
      \draw (p54) -- (x5);
    \end{scope}
    \begin{scope}[grpIII,line width=.42pt,opacity=.8]
      \draw (p31) -- (a31);
      \draw (p31) -- (a32);
      \draw (p31) -- (x3);
      \draw (p31) -- (x4);
      \draw (p32) -- (a31);
      \draw (p32) -- (a32);
      \draw (p32) -- (x3);
      \draw (p32) -- (x4);
      \draw (p33) -- (a31);
      \draw (p33) -- (a32);
      \draw (p33) -- (x3);
      \draw (p33) -- (x4);
    \end{scope}
    \begin{scope}[grpIV,line width=.42pt,opacity=.8]
      \draw (p41) -- (a41);
      \draw (p41) -- (a42);
      \draw (p41) -- (x1);
      \draw (p41) -- (x5);
      \draw (p42) -- (a41);
      \draw (p42) -- (a42);
      \draw (p42) -- (x1);
      \draw (p42) -- (x5);
      \draw (p43) -- (a41);
      \draw (p43) -- (a42);
      \draw (p43) -- (x1);
      \draw (p43) -- (x5);
      \draw (p44) -- (a41);
      \draw (p44) -- (a42);
      \draw (p44) -- (x1);
      \draw (p44) -- (x5);
    \end{scope}
    \begin{scope}[black,line width=1.35pt]
      \draw (x1) -- (x2);
      \draw (x1) -- (x3);
      \draw (x1) -- (x4);
      \draw (x2) -- (x3);
      \draw (x2) -- (x4);
      \draw (x3) -- (x5);
      \draw (x4) -- (x5);
    \end{scope}
    \node[anode=grpII] (A21) at (a21) {};
    \node[anode=grpII] (A22) at (a22) {};
    \node[pnode=grpII] (P21) at (p21) {};
    \node[pnode=grpII] (P22) at (p22) {};
    \node[pnode=grpII] (P23) at (p23) {};
    \node[anode=grpI] (A11) at (a11) {};
    \node[anode=grpI] (A12) at (a12) {};
    \node[pnode=grpI] (P11) at (p11) {};
    \node[pnode=grpI] (P12) at (p12) {};
    \node[pnode=grpI] (P13) at (p13) {};
    \node[anode=grpV] (A51) at (a51) {};
    \node[anode=grpV] (A52) at (a52) {};
    \node[pnode=grpV] (P51) at (p51) {};
    \node[pnode=grpV] (P52) at (p52) {};
    \node[pnode=grpV] (P53) at (p53) {};
    \node[pnode=grpV] (P54) at (p54) {};
    \node[anode=grpIII] (A31) at (a31) {};
    \node[anode=grpIII] (A32) at (a32) {};
    \node[pnode=grpIII] (P31) at (p31) {};
    \node[pnode=grpIII] (P32) at (p32) {};
    \node[pnode=grpIII] (P33) at (p33) {};
    \node[anode=grpIV] (A41) at (a41) {};
    \node[anode=grpIV] (A42) at (a42) {};
    \node[pnode=grpIV] (P41) at (p41) {};
    \node[pnode=grpIV] (P42) at (p42) {};
    \node[pnode=grpIV] (P43) at (p43) {};
    \node[pnode=grpIV] (P44) at (p44) {};
    \node[cnode] at (x1) {$X_1$};
    \node[cnode] at (x2) {$X_2$};
    \node[cnode] at (x3) {$X_3$};
    \node[cnode] at (x5) {$X_5$};
    \node[cnode] at (x4) {$X_4$};
    \node[vlab=grpII] at (43.00:5.280) {$a_{2,1}$};
    \node[vlab=grpII] at (65.00:5.280) {$a_{2,2}$};
    \node[vlab=grpII] at (39.00:3.930) {$p_{2,1}$};
    \node[vlab=grpII] at (54.00:3.930) {$p_{2,2}$};
    \node[vlab=grpII] at (69.00:3.930) {$p_{2,3}$};
    \node[glab=grpII] at (54.00:7.150) {$A_2$\\[-1pt]$U_2=\xs{124}$};
    \node[vlab=grpI] at (115.00:5.280) {$a_{1,1}$};
    \node[vlab=grpI] at (137.00:5.280) {$a_{1,2}$};
    \node[vlab=grpI] at (111.00:3.930) {$p_{1,1}$};
    \node[vlab=grpI] at (126.00:3.930) {$p_{1,2}$};
    \node[vlab=grpI] at (141.00:3.930) {$p_{1,3}$};
    \node[glab=grpI] at (126.00:7.150) {$A_1$\\[-1pt]$U_1=\xs{123}$};
    \node[vlab=grpV] at (187.00:5.280) {$a_{5,1}$};
    \node[vlab=grpV] at (209.00:5.280) {$a_{5,2}$};
    \node[vlab=grpV] at (179.00:3.930) {$p_{5,1}$};
    \node[vlab=grpV] at (191.67:3.930) {$p_{5,2}$};
    \node[vlab=grpV] at (204.33:3.930) {$p_{5,3}$};
    \node[vlab=grpV] at (217.00:3.930) {$p_{5,4}$};
    \node[glab=grpV] at (198.00:7.150) {$A_5$\\[-1pt]$U_5=\xs{25}$};
    \node[vlab=grpIII] at (259.00:5.280) {$a_{3,1}$};
    \node[vlab=grpIII] at (281.00:5.280) {$a_{3,2}$};
    \node[vlab=grpIII] at (255.00:3.930) {$p_{3,1}$};
    \node[vlab=grpIII] at (270.00:3.930) {$p_{3,2}$};
    \node[vlab=grpIII] at (285.00:3.930) {$p_{3,3}$};
    \node[glab=grpIII] at (270.00:7.150) {$A_3$\\[-1pt]$U_3=\xs{34}$};
    \node[vlab=grpIV] at (331.00:5.280) {$a_{4,1}$};
    \node[vlab=grpIV] at (353.00:5.280) {$a_{4,2}$};
    \node[vlab=grpIV] at (323.00:3.930) {$p_{4,1}$};
    \node[vlab=grpIV] at (335.67:3.930) {$p_{4,2}$};
    \node[vlab=grpIV] at (348.33:3.930) {$p_{4,3}$};
    \node[vlab=grpIV] at (1.00:3.930) {$p_{4,4}$};
    \node[glab=grpIV] at (342.00:7.150) {$A_4$\\[-1pt]$U_4=\xs{15}$};
    \begin{scope}[shift={(0,-8.05)},font=\scriptsize]
      \node[cnode,minimum size=3.4mm,font=\tiny] at (-4.55,0) {};
      \node[anchor=west] at (-4.32,0) {central vertex};
      \node[anode=grpN] at (-1.75,0) {};
      \node[anchor=west] at (-1.58,0) {anchor};
      \node[pnode=grpN] at (0.25,0) {};
      \node[anchor=west] at (0.42,0) {connector};
      \draw[black!30,line width=.32pt] (2.62,0) -- (3.32,0);
      \node[anchor=west] at (3.42,0) {anchor edge};
    \end{scope}
  \end{tikzpicture}}
\caption{The 32-vertex graph $G$, drawn with all $121$ edges.
The five central variables $X_1,\ldots,X_5$ sit on the inner pentagon.  Its
five sides, together with two of its five diagonals, are exactly the
seven central edges $E_0$.  The other three diagonals, $\xs{34}$,
$\xs{15}$ and $\xs{25}$, are absent from $G$.  They are therefore
minimal subsets of $X$ contained in no central edge, and together with
the only two triangles of $G_0$, namely $\xs{123}$ and $\xs{124}$, they
make up the five sets $U_1,\ldots,U_5$ of Section~\ref{sec:graph}.
Filled colored vertices are the ten anchors $a_{i,k}$ and hollow ones
the $17$ connectors $p_{i,k}$; each connector of group $i$ is joined to
both anchors of $A_i$ and to every central variable in $U_i$.
The $40$ light arcs join every pair of anchors from distinct groups,
forming $K_{2,2,2,2,2}$; they are routed outside the anchor circle, so
every arc that appears inside it is a connector edge.
Edge crossings are not vertices.}
\label{fig:graph32}
\end{figure}

\section{The Central Graph}
\label{sec:core-bounds}

We start by considering only the central graph $G_0=(X, E_0)$ and the
sets $U_1,\ldots,U_5$ defined in~\eqref{eq:the:sets:u}.  A
``polymatroid on $G_0$'' means a polymatroid with variables $X$.  We
establish here several inequalities for polymatroids on $G_0$: they do
not refer to tree decompositions of $G_0$, but instead are bounds on
the following quantity:
\begin{align}
 M(h) := & \min(h(U_1), h(U_2), h(U_3), h(U_4), h(U_5)) \label{eq:mh}
\end{align}
In the next section we will use these bounds to compute bounds on the
widths of the large graph $G$. 

In this section we prove the following:

\begin{theorem}\label{thm:central}
  With $M$ as above, the following hold:
\begin{align}
\max_{h \in M_X\cap ED(G_0)} M(h) &=1 &
\max_{h \in N_X\cap ED(G_0)} M(h) &=\frac{14}{11}  \notag\\
\max_{h \in \text{Lin}_X\cap ED(G_0)} M(h)&=\frac{9}{7} &
\frac{130}{101}\leq\max_{h \in \bar \Gamma^*_X\cap ED(G_0)} M(h)&\leq\frac{38}{29} \notag\\
\max_{h \in \Gamma_X\cap ED(G_0)} M(h)&=\frac{4}{3} &&
\label{eq:central-bounds}
\end{align}
In particular:
\begin{align}
\max_{h \in M_X\cap ED(G_0)} M(h)
&<\max_{h \in N_X\cap ED(G_0)} M(h)
<\max_{h \in \text{Lin}_X\cap ED(G_0)} M(h)\notag\\
&<\max_{h \in \bar \Gamma^*_X\cap ED(G_0)} M(h)
<\max_{h \in \Gamma_X\cap ED(G_0)} M(h)
\label{eq:central-bounds:2}
\end{align}
\end{theorem}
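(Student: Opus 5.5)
The proof splits into a lower bound (an explicit witness) and an upper bound (a certificate inequality) for each of the five classes. For the lower bounds we exhibit, in each class, an edge-dominated polymatroid $h$ on $X$ with $M(h)$ equal to the claimed value.

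\emph{Modular.} Take all weights $1/2$. Then every $U_i$ has value at least $1$, since $|U_i|\ge2$.

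\emph{Submodular.} Take $f_{G_0}/3$. Each $U_i$ is neither a singleton nor an edge, so $f_{G_0}(U_i)=4$ and $M=4/3$.

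\emph{Normal.} Use a rational coverage function $c\cdot|\varphi(\cdot)|$, with atoms placed on step functions $h^K$. The weights $m_K$ come from solving the linear program ``maximize $t$ subject to $\sum_K m_K\bm 1_{K\cap e\ne\emptyset}\le1$ for all $e\in E_0$ and $\sum_K m_K\bm 1_{K\cap U_i\ne\emptyset}\ge t$.'' The optimum should be $14/11$; its optimal basis gives the witness, and its dual gives the certificate for the upper bound.

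\emph{Linear.} Build vectors in the spirit of the linear example of Section~\ref{sec:three:special}. For instance, put three vectors in general position in a $2$-dimensional subspace, assigned to $X_3,X_4$ and a component of $X_1$ or $X_2$, and combine this with a normal part. Tune the multiplicities so that $M=9/7$.

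\emph{Entropic.} Take a convex combination (a mixture of scaled copies) of $h_0$, placed on suitable variables, with linear and normal pieces, so that the Ingleton violation of $h_0$ is exploited, and scale to reach $130/101$. Since $\bar\Gamma^*$ is a convex cone, sums of scaled entropic functions stay almost entropic.

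For the upper bounds, each one is a single linear inequality of the form
\[
\lambda\sum_i \alpha_i h(U_i)\le \sum_{e}w_e h(e) + (\text{nonnegative slack terms}),
\]
with $\sum_i\alpha_i=1$. It is assembled from submodularity instances and, depending on the class, the appropriate extra inequalities:
\begin{itemize}
\item for $\Gamma$, Shannon inequalities only (for instance combinations of \eqref{eq:cover} and submodularity), with bound $4/3$;
\item for $\text{Lin}$, Shannon inequalities plus instances of Ingleton \eqref{eq:ing} at single variables, with bound $9/7$;
\item for $\bar\Gamma^*$, Shannon inequalities plus \eqref{eq:zy} applied in both orders of $A,B$, with bound $38/29$;
\item for $N$, Shannon inequalities plus the inequalities \eqref{eq:normal:polymatroids}, with bound $14/11$;
\item for $M$, modularity, with bound $1$. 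Here the argument is simple: $h(X_3)+h(X_4)\le$ something, more directly $\min(h(U_3),h(U_4),h(U_5))\le\frac13(h(X_{34})+h(X_{15})+h(X_{25}))\le$ a combination of edge values giving at most $1$. For example $h(X_3)+h(X_5)\le1$, and similarly for the other singleton sums.
\end{itemize}
Given the certificate, the bound follows as $M(h)\le\sum_i\alpha_ih(U_i)\le\lambda^{-1}\sum_e w_e=$ the claimed value, using $h(e)\le1$.

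Each certificate is found as an optimal dual of the LP that maximizes $t$ over the respective outer cone (Shannon, plus the Ingleton instances, plus the ZY instances) subject to $h(U_i)\ge t$ and edge domination. The certificate is then verified by expanding it as an identity in the $2^5$ coordinates. This verification is the routine part and would be delegated to \texttt{verify.py}.

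The strict inequalities \eqref{eq:central-bounds:2} follow by comparing the numbers: $1<14/11<9/7<130/101<38/29<4/3$, i.e.\ $1<1.2727<1.2857<1.2871<1.3103<1.3333$.

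The main obstacle is the entropic lower bound $130/101$. It requires finding a genuinely (almost) entropic witness that beats the linear optimum $9/7$, i.e.\ one that violates Ingleton at the right place while still being edge-dominated on all seven edges. Entropy values involve $\log_2 3$, yet the target is rational, so I would first try mixtures of $h_0$ with other distributions (or convolutions/limits, e.g.\ Matúš-type sequences) and optimize the mixing weights. I expect a rational bound like $130/101$ arises from a family of distributions with purely rational entropy profile, such as uniform distributions on suitable groups or quasi-uniform constructions, rather than from $h_0$ directly. Identifying this construction is the hardest step.
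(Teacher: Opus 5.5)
Your architecture (an explicit witness and a certificate inequality for each class) is the paper's. Your modular and submodular cases are complete and agree with it; the modular bound is exactly the identity $h(\xs{34})+h(\xs{15})+h(\xs{25})=h(\xs{12})+h(\xs{35})+h(\xs{45})$.

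\textbf{The gap: no lower bounds for $\text{Lin}_X$ and $\bar\Gamma^*_X$.} These cannot come out of your LPs. Shannon plus Ingleton, or Shannon plus Zhang--Yeung, is only an outer relaxation of the linear (resp.\ almost entropic) cone on five variables, so an optimal LP vertex need not be representable. You must exhibit an actual object.

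Your linear sketch (``three vectors in general position \dots\ tune the multiplicities'') specifies nothing checkable, and attaining $9/7$ is rigid. For edge dominated linear $h$ the Ingleton certificate gives $h(U_1)+h(U_2)+3h(U_3)+h(U_4)+h(U_5)\le 9$. So $M(h)=9/7$ forces, simultaneously:
\begin{itemize}
\item all five $h(U_i)=9/7$;
\item all seven edges at value $1$;
\item $\Ing_h(X_1,X_2;X_3,X_4)=I_h(X_1;X_5)=I_h(X_2;X_5)=I_h(X_3;X_4\mid X_5)=H_h(X_5\mid\xs{34})=0$.
\end{itemize}
The paper achieves this with an explicit arrangement in $\R^{18}$ scaled by $1/14$. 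It takes complementary $9$-dimensional $V_3,V_4$, and $V_5$ spanned by five basis vectors from each of them. Then $V_1,V_2$ are $8$-dimensional, meet $V_5$ trivially, meet each of $V_3,V_4$ in dimension $3$ and each other in dimension $2$, and satisfy $V_1+V_2+V_3=V_1+V_2+V_4=\R^{18}$. Without a linear witness, the strict normal--linear inequality in \eqref{eq:central-bounds:2} is also unproved.

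The normal case is less serious, since that cone is generated by step functions and your LP is exact there. Still, you only assert that its value ``should be'' $14/11$. Likewise, you never write down an upper-bound certificate. The paper's certificates are short and hand-checkable:
\begin{itemize}
\item \eqref{eq:shannon-core} plus three basic inequalities gives $4/3$;
\item one Ingleton instance plus four Shannon terms gives $9/7$;
\item two Zhang--Yeung instances give $38/29$;
\item the normal inequality is checked directly on each step function $h^K$, not via \eqref{eq:normal:polymatroids}.
\end{itemize}

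\textbf{The entropic witness.} You leave it open, so the strict linear--entropic inequality in \eqref{eq:central-bounds:2} is unproved. Your heuristic also points the wrong way: a rational value of $M$ does not require a rational entropy profile. The paper uses $h_0$ directly:
\[
g_E=\frac{10h_0+10s_1+24s_2+14s_3+24s_4}{101}.
\]
Here $h_0$ is the profile of $(R\wedge S,R\vee S,R,S,0)$. Each $s_j$ averages a small linear profile $q_j$ over the relabelings $\mathrm{id},(12),(34),(12)(34)$, which preserve $E_0$ and permute the $U_i$.

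On the seven edges and on $U_1,U_2,U_3$, $h_0$ takes only the rational values $3/2$, $1$, $2$. Among singletons, edges and the $U_i$, the irrational $a=2-\frac34\log_2 3$ appears only at $X_1,X_2,U_4,U_5$. At $U_4,U_5$ one gets $g_E=(122+10a)/101>130/101$, because $a>4/5$.

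What makes $h_0$ useful is that $I_{h_0}(X_1;X_2\mid X_3)=I_{h_0}(X_1;X_2\mid X_4)=I_{h_0}(X_3;X_4)=0$. By \eqref{eq:shannon-core} it therefore attains $h(U_1)+h(U_2)+h(U_3)=h(\xs{13})+h(\xs{23})+h(\xs{14})+h(\xs{24})$ with equality. For linear $h$, by contrast, Ingleton forces the bracket in \eqref{eq:shannon-core} to be at least $I_h(X_1;X_2)$. The linear pieces $s_j$ put mass on $X_5$ to lift $U_4,U_5$ and bring every edge to value exactly $1$. No quasi-uniform or group-based construction is needed.
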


We prove the theorem in the rest of the section. The upper bounds
$1$, $14/11$, $9/7$, $38/29$, and $4/3$ are
Lemmas~\ref{lem:modular-core}, \ref{lem:normal-core},
\ref{lem:ingleton}, \ref{lem:zy-core}, and~\ref{lem:subw-core}.
The modular bound is attained by $h(U)=|U|/2$, as shown in
Lemma~\ref{lem:modular-core}.  The
matching lower bounds $14/11$, $9/7$, and $4/3$ are
Lemmas~\ref{lem:normal-tight}, \ref{lem:ingleton-tight}
and~\ref{lem:subw-tight}, and the lower bound $130/101$ is
Lemma~\ref{lem:zy-tight}.  The
inequalities~\eqref{eq:central-bounds:2} follow by $1<14/11$, $14/11<9/7$
(because $14\cdot7=98<99=11\cdot9$), $9/7<130/101$ (because
$9\cdot101=909<910=7\cdot130$), and $38/29<4/3$ (because
$38\cdot3=114<116=29\cdot4$).

{\bf Discussion.} The theorem immediately implies that the entropic
bound differs from the polymatroid bound for Disjunctive Datalog Rules
(DDR)~\cite{DBLP:journals/theoretics/KhamisNS25}.  More precisely,
consider the following DDR:
\begin{align*}
  U_1(X_1,X_2,X_3)& \vee U_2(X_1,X_2,X_4)\vee U_3(X_3,X_4) \vee
                    U_4(X_1,X_5) \vee U_5(X_2,X_5) \\
\leftarrow\ & A(X_1,X_2) \wedge B(X_1,X_3) \wedge C(X_1,X_4) \wedge D(X_2,X_3)\\
             & {}\wedge E(X_2,X_4) \wedge F(X_3,X_5) \wedge G(X_4,X_5)
\end{align*}
The entropic- or polymatroid-bound of this DDR is the quantity
$\max_h M(h)$, where $h$ is an almost entropic polymatroid, or an
arbitrary polymatroid respectively ($h \in \bar\Gamma_X^*$ or
$h \in \Gamma_X$).  Denote these two bounds by $e$ and $p$
respectively: thus $e \leq 38/29$ and $p=4/3$.  Then, the results
in~\cite{DBLP:journals/theoretics/KhamisNS25} prove that, on one hand,
for any input relations $A, B, \ldots, G$ of size $N$, there exists an
output of the DDR rule where $\max(|U_1|,|U_2|,|U_3|,|U_4|,|U_5|)$ is
$\leq N^e$, and, on the other hand, there exists an algorithm that can
compute an output of size $O(N^p)$ in time $\tilde O(N^p)$.
Theorem~\ref{thm:central} proves that there is a gap between these two
bounds.

\subsection{Modular Polymatroids}

\begin{lemma}\label{lem:modular-core}
Every modular polymatroid $h\in M_X$ satisfies:
\begin{equation}\label{eq:modular-core}
  h(U_3)+h(U_4)+h(U_5)
  =h(\xs{12})+h(\xs{35})+h(\xs{45}).
\end{equation}
In particular, if $h$ is an edge dominated modular polymatroid on the
central graph $G_0$, then $\min_i h(U_i)\le1$.
This bound is attained by $h(U)=|U|/2$.
\end{lemma}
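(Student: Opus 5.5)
The plan is to use the weight description of modular polymatroids from Section~\ref{sec:basic}, $h(U)=\sum_{X_i\in U}w_i$ with $w_i=h(X_i)\ge0$, and to reduce everything to counting how often each variable occurs.

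For the identity \eqref{eq:modular-core}, I expand both sides as sums of weights. The left-hand side is $h(\xs{34})+h(\xs{15})+h(\xs{25})$, which equals $w_1+w_2+w_3+w_4+2w_5$. The right-hand side is $h(\xs{12})+h(\xs{35})+h(\xs{45})$, which gives the same expression $w_1+w_2+w_3+w_4+2w_5$. So the identity amounts to checking that each variable occurs equally often in the two families $\{U_3,U_4,U_5\}$ and $\{\xs{12},\xs{35},\xs{45}\}$: $X_5$ occurs twice on each side and every other variable once.

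For the bound, suppose $h$ is edge dominated. The sets $\xs{12}$, $\xs{35}$ and $\xs{45}$ all lie in $E_0$, so the right-hand side of \eqref{eq:modular-core} is at most $3$. Then
\[
3\min_i h(U_i)\le h(U_3)+h(U_4)+h(U_5)\le 3,
\]
which gives $\min_i h(U_i)\le1$.

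For attainment, I take $h(U)=|U|/2$, which is modular with all $w_i=1/2$. Every edge has value $1$, so $h$ is edge dominated. The sets $U_3,U_4,U_5$ have size $2$ and so have value $1$, while $U_1,U_2$ have size $3$ and value $3/2$. Hence $M(h)=1$.

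There is no real obstacle here. The only step that needs care is the variable-count check behind \eqref{eq:modular-core}, in particular that $X_5$ appears exactly twice on each side.
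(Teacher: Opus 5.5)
Your proof is correct and follows the paper's argument essentially verbatim. You expand both sides by modularity to $h(X_1)+h(X_2)+h(X_3)+h(X_4)+2h(X_5)$, bound $3\min_i h(U_i)$ by the values of the three central edges $\xs{12},\xs{35},\xs{45}$, and check that $h(U)=|U|/2$ attains the bound.
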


\begin{proof}
By modularity, both sides of the identity equal
\[
  h(X_1)+h(X_2)+h(X_3)+h(X_4)+2h(X_5).
\]
If $h$ is edge dominated, then
\[
  3\min_i h(U_i)
  \le h(U_3)+h(U_4)+h(U_5)
  =h(\xs{12})+h(\xs{35})+h(\xs{45})
  \le3,
\]
since $\xs{12},\xs{35},\xs{45}$ are central edges.
For $h(U)=|U|/2$, every central edge has value $1$, and
$\min_i h(U_i)=1$.  This proves the lemma.
\end{proof}

\subsection{Normal Polymatroids}

\begin{lemma}\label{lem:normal-core}
Every normal polymatroid $h \in N_X$ satisfies:
\begin{equation}\label{eq:normal-core}
\begin{aligned}
&h(\xs{123})+h(\xs{124})+5h(\xs{34})+2h(\xs{15})+2h(\xs{25})\\
&\qquad\le2h(\xs{13})+2h(\xs{14})+2h(\xs{23})+2h(\xs{24})
        +3h(\xs{35})+3h(\xs{45}).
\end{aligned}
\end{equation}
In particular, if $h$ is an edge dominated normal polymatroid on the
central graph $G_0$ then $\min_i h(U_i)\le14/11$.
\end{lemma}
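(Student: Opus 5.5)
By linearity, it suffices to verify \eqref{eq:normal-core} on step functions. Both sides of \eqref{eq:normal-core} are nonnegative linear combinations of values $h(U)$, so the inequality is preserved under nonnegative linear combinations of $h$. Since every normal polymatroid is a nonnegative combination $\sum_K m_K h^K$ of step functions, it is enough to check \eqref{eq:normal-core} for each step function $h^K$ with $\emptyset\neq K\subseteq X$. For $h^K$, each term $h^K(U)$ is $1$ if $U\cap K\neq\emptyset$ and $0$ otherwise. The inequality therefore becomes a purely combinatorial statement. The total coefficient of the sets among $\xs{123},\xs{124},\xs{34},\xs{15},\xs{25}$ that meet $K$ (weights $1,1,5,2,2$) must be at most the total coefficient of the edges among $\xs{13},\xs{14},\xs{23},\xs{24},\xs{35},\xs{45}$ that meet $K$ (weights $2,2,2,2,3,3$).

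This leaves $31$ cases. I would cut them down using the symmetries $X_1\leftrightarrow X_2$ and $X_3\leftrightarrow X_4$. Both are automorphisms of $G_0$, and both preserve the two weighted families: the first swaps $\xs{15}\leftrightarrow\xs{25}$ and $\xs{13}\leftrightarrow\xs{23}$, $\xs{14}\leftrightarrow\xs{24}$; the second swaps $\xs{123}\leftrightarrow\xs{124}$ and $\xs{35}\leftrightarrow\xs{45}$. I would then organize the cases by $K$.

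For singletons the values are as follows:
\begin{itemize}
\item $K=\{X_1\}$ gives $1+1+2=4\le 2+2$, which is tight.
\item $K=\{X_3\}$ gives $1+5=6\le 2+2+3=7$.
\item $K=\{X_5\}$ gives $2+2=4\le 3+3$.
\end{itemize}

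For larger $K$, note that the left side is at most the sum of the left sides of the singletons in $K$, and likewise for the right side. This gives no immediate domination, because unions can lose more on the right than on the left. I would therefore simply tabulate the remaining classes up to symmetry, or defer to \texttt{verify.py}. The most delicate cases are those where the weight-$5$ set $\xs{34}$ is hit while few edges are. Spot checks are reassuring: $K=\{X_3,X_4\}$ gives $7\le 14$, $K=\{X_1,X_2\}$ gives $6\le 8$, $K=\{X_1,X_3\}$ gives $8\le 2\cdot 4+3=11$, and $K=X$ gives $11\le 14$.

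For the consequence, let $h$ be edge dominated and normal. The left side of \eqref{eq:normal-core} is at least $(1+1+5+2+2)\min_i h(U_i)=11\min_i h(U_i)$. The right side is at most $2\cdot 4+3\cdot 2=14$, because $\xs{13},\xs{14},\xs{23},\xs{24},\xs{35},\xs{45}$ are all central edges. Hence $\min_i h(U_i)\le 14/11$.

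I expect the only real obstacle to be the finite case check. That check is routine, but it has to be done without gaps, and the tight case $K=\{X_1\}$ shows that there is no slack to spare.
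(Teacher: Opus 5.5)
Your proposal is the paper's proof. Both sides of the inequality have nonnegative coefficients, so it suffices to check each step function $h^K$ over all $K\subseteq X$, and the consequence $11\min_i h(U_i)\le 14$ follows exactly as in the paper. One of your spot checks is miscomputed, however. For $K=\{X_1,X_3\}$ the left side also picks up $\xs{124}$, which contains $X_1$, and the right side does not pick up $\xs{24}$, so the values are $9\le 9$, not $8\le 11$. This is one of ten equality cases ($X_1$, $X_2$, $\xs{13}$, $\xs{14}$, $\xs{23}$, $\xs{24}$, $\xs{35}$, $\xs{45}$, $\xs{123}$, $\xs{124}$), so the table must be completed in full rather than sampled.
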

\begin{proof}
To prove
inequality~\eqref{eq:normal-core} we use the fact that every normal
polymatroid is a nonnegative linear combination of step functions,
$h = \sum_{K \subseteq X}m(K)\,h^K$ with all $m(K)\ge0$, where
$h^K(U)=\bm 1_{U\cap K\neq\emptyset}$ (see Section~\ref{sec:basic}).
Therefore, it suffices to check inequality~\eqref{eq:normal-core} for
a step function $h^K$:
\begin{align*}
  & \mathbf1_{\{K\cap\xs{123}\ne\varnothing\}}
    +\mathbf1_{\{K\cap\xs{124}\ne\varnothing\}}
    +5\cdot\mathbf1_{\{K\cap\xs{34}\ne\varnothing\}}
    +2\cdot\mathbf1_{\{K\cap\xs{15}\ne\varnothing\}}
    +2\cdot\mathbf1_{\{K\cap\xs{25}\ne\varnothing\}}\leq\\
  & \quad 2\cdot\mathbf1_{\{K\cap\xs{13}\ne\varnothing\}}
    +2\cdot\mathbf1_{\{K\cap\xs{14}\ne\varnothing\}}
    +2\cdot\mathbf1_{\{K\cap\xs{23}\ne\varnothing\}}
    +2\cdot\mathbf1_{\{K\cap\xs{24}\ne\varnothing\}}\\
  & \quad {}+3\cdot\mathbf1_{\{K\cap\xs{35}\ne\varnothing\}}
    +3\cdot\mathbf1_{\{K\cap\xs{45}\ne\varnothing\}}
\end{align*}
This inequality can be checked exhaustively for all 32 possible sets
$K \subseteq \set{X_1, \ldots, X_5}$.
The same exhaustive check shows that equality holds iff $K$ is one of
the following sets:
\begin{equation}\label{eq:tight-atoms}
X_1,\quad X_2,\quad \xs{13},\quad \xs{23},\quad \xs{123},\quad \xs{14},\quad \xs{24},\quad \xs{124},\quad \xs{35},\quad \xs{45}.
\end{equation}

The inequality implies for edge dominated normal polymatroids:
$11\min_i h(U_i) \leq
h(\xs{123})+h(\xs{124})+5h(\xs{34})+2h(\xs{15})+2h(\xs{25})
\le2h(\xs{13})+2h(\xs{14})+2h(\xs{23})+2h(\xs{24})
+3h(\xs{35})+3h(\xs{45}) \leq 2+2+2+2+3+3=14$.
\end{proof}

Inequality \eqref{eq:normal-core} is genuinely a normal inequality: it
fails for the polymatroid $g_P$ of Lemma~\ref{lem:subw-tight} below,
whose left-hand side is $11\cdot\frac43>14$, and, by
Lemma~\ref{lem:ingleton-tight} below, it already fails for a linear
polymatroid, so it is not a linear rank inequality.

\begin{lemma}\label{lem:normal-tight}
  There exists an edge dominated, normal polymatroid $g_N$ on the
  central graph $G_0$ such that $g_N(U_i)=14/11$ for $i=1,\ldots,5$.
  Furthermore,
  \begin{equation}\label{eq:normal-singletons}
    g_N(X_1)=g_N(X_2)=\frac6{11}<\frac23,\qquad
    g_N(X_3)=g_N(X_4)=\frac7{11}<\frac23,\qquad
    g_N(X_5)=\frac8{11}<\frac34.
  \end{equation}
\end{lemma}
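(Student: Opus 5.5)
The plan is to build $g_N$ explicitly as a nonnegative combination of step functions, $g_N=\sum_K m(K)\,h^K$. By complementary slackness with Lemma~\ref{lem:normal-core}, all of its mass should sit on the atoms \eqref{eq:tight-atoms}, which are exactly the sets $K$ where \eqref{eq:normal-core} is tight. The upper bound $14/11$ is attained only if \eqref{eq:normal-core} and every step of its consequence chain hold with equality. This forces three things: every $U_i$ has value exactly $14/11$, the six edges $\xs{13},\xs{14},\xs{23},\xs{24},\xs{35},\xs{45}$ have value exactly $1$, and the support lies in \eqref{eq:tight-atoms}. To cut down the unknowns I impose the symmetry $X_1\leftrightarrow X_2$, $X_3\leftrightarrow X_4$ of $G_0$. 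This leaves four weights: $m(X_1)=m(X_2)=a$, $m(\xs{13})=m(\xs{23})=m(\xs{14})=m(\xs{24})=b$, $m(\xs{123})=m(\xs{124})=c$, and $m(\xs{35})=m(\xs{45})=d$.

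Next I write the value of $h^K$-combinations on a set $U$ as the total weight of atoms meeting $U$, and solve the resulting linear equations. The prescribed singleton value $g_N(X_5)=2d=8/11$ gives $d=4/11$. The edge $\xs{13}$ then gives $a+3b+2c+d=1$, while the singleton values give $a+2b+2c=6/11$ and $2b+c+d=7/11$. Solving yields $b=c=1/11$ and $a=2/11$, all nonnegative and rational, so $g_N$ is literally of the form $c\cdot|\varphi(\cdot)|$. The total mass is $T=2a+4b+2c+2d=18/11$.

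Finally I verify the claimed values, each by a one-line count. For the sets $U_i$:
\begin{itemize}
\item $g_N(\xs{123})=T-m(\xs{45})=14/11$, and symmetrically for $\xs{124}$;
\item $g_N(\xs{34})=T-2a=14/11$, since only the atoms $X_1,X_2$ miss $\xs{34}$;
\item $g_N(\xs{15})=T-(a+2b)=14/11$, since the atoms missing $\xs{15}$ are $X_2$, $\xs{23}$ and $\xs{24}$; symmetrically for $\xs{25}$.
\end{itemize}
For edge domination:
\begin{itemize}
\item $g_N(\xs{35})=2b+c+2d=1$, and symmetrically for $\xs{45}$;
\item $g_N(\xs{13})=1$ by construction, and symmetrically for $\xs{14},\xs{23},\xs{24}$;
\item $g_N(\xs{12})=2a+4b+2c=10/11\le1$.
\end{itemize}
The singleton values \eqref{eq:normal-singletons} hold by construction, and the strict comparisons with $2/3$ and $3/4$ are immediate.

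The only real obstacle is choosing the ansatz, namely guessing that the support is the tight atoms with the symmetric weights above. Once that choice is made, everything is a finite check, which could also be delegated to \texttt{verify.py}.
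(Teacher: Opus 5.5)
Your proposal is correct and is essentially the paper's proof. Your weights $a=2/11$, $b=c=1/11$, $d=4/11$ reproduce exactly the witness \eqref{eq:normal-witness}, and your checks of the values on the $U_i$, the central edges and the singletons match the paper's; the complementary-slackness and symmetry derivation of the coefficients is helpful motivation that the paper omits, but it is not needed for the verification.
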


\begin{proof}
  We define $g_N$ to be the following nonnegative combination of the
  ten step functions $h^K$ given by the sets $K$
  in~\eqref{eq:tight-atoms}; the ten coefficients sum to $18/11$:
\begin{equation}\label{eq:normal-witness}
\begin{aligned}
  g_N = \frac{1}{11}\bigl(
  &2h^{\xs{1}}+2h^{\xs{2}}
   +h^{\xs{13}}+h^{\xs{23}}+h^{\xs{123}}\\
  &{}+h^{\xs{14}}+h^{\xs{24}}+h^{\xs{124}}
   +4h^{\xs{35}}+4h^{\xs{45}}\bigr)
\end{aligned}
\end{equation}
Observe that $g_N(\xs{12})=10/11$ and $g_N(e)=1$ for the other six
central edges $e\in E_0$, so $g_N$ is edge dominated. We also observe
that $g_N(U_i)=14/11$ for $i=1,\ldots,5$. The singleton values follow
directly from \eqref{eq:normal-witness}. This proves the lemma.
\end{proof}

\subsection{Linear Polymatroids}

\label{subsec:shannon-core}

The remaining three bounds, for linear, almost entropic and general
polymatroids, all come from the same source.  We start from the identity:
\begin{equation}\label{eq:shannon-core}
\begin{aligned}
h(\xs{123})+h(\xs{124})+h(\xs{34})
&+\bigl[I_h(X_1;X_2\mid X_3)+I_h(X_1;X_2\mid X_4)+I_h(X_3;X_4)\bigr]\\
&= h(\xs{13})+h(\xs{23})+h(\xs{14})+h(\xs{24}),
\end{aligned}
\end{equation}
which can be checked immediately by expanding the three information
terms.  Dropping the bracket gives the polymatroid bound of
Lemma~\ref{lem:subw-core}; replacing it by the sharper lower bound
$I_h(X_1;X_2)$ supplied by \eqref{eq:ing} gives
Lemma~\ref{lem:ingleton}; and \eqref{eq:zy} gives the intermediate
Lemma~\ref{lem:zy-core}.

\begin{lemma}\label{lem:ingleton}
  Every linear polymatroid $h \in \text{Lin}_X$ satisfies
\begin{align}
h(\xs{123})+h(\xs{124})+3h(\xs{34})+h(\xs{15})+h(\xs{25})
&\le h(\xs{12})+h(\xs{13})+h(\xs{14})+h(\xs{23})+h(\xs{24})\notag\\
&\quad+2h(\xs{35})+2h(\xs{45}).
\label{eq:ing-core}
\end{align}
In particular, if $h$ is an edge dominated linear polymatroid on the
central graph $G_0$, then $\min_i h(U_i)\le9/7$.
\end{lemma}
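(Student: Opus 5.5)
The plan is to combine the identity \eqref{eq:shannon-core} with Ingleton's inequality \eqref{eq:ing} at $(X,Y,A,B)=(X_1,X_2,X_3,X_4)$, and then add a few basic Shannon inequalities to bring in the remaining sets $\xs{15}$, $\xs{25}$ and the extra copies of $\xs{34}$.

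\emph{Step 1: the Ingleton part.} Since $h$ is linear, \eqref{eq:ing} gives
\[
I_h(X_1;X_2)\le I_h(X_1;X_2\mid X_3)+I_h(X_1;X_2\mid X_4)+I_h(X_3;X_4).
\]
The right-hand side is exactly the bracket in \eqref{eq:shannon-core}, so substituting yields
\[
h(\xs{123})+h(\xs{124})+h(\xs{34})+I_h(X_1;X_2)\le h(\xs{13})+h(\xs{23})+h(\xs{14})+h(\xs{24}).
\]
Writing $I_h(X_1;X_2)=h(X_1)+h(X_2)-h(\xs{12})$, this becomes
\[
h(\xs{123})+h(\xs{124})+h(\xs{34})+h(X_1)+h(X_2)\le h(\xs{12})+h(\xs{13})+h(\xs{14})+h(\xs{23})+h(\xs{24}).
\]

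\emph{Step 2: the Shannon part.} Next I would show that
\[
2h(\xs{34})+h(\xs{15})+h(\xs{25})\le h(X_1)+h(X_2)+2h(\xs{35})+2h(\xs{45}),
\]
which uses only polymatroid axioms.
\begin{itemize}
\item Subadditivity gives $h(\xs{15})\le h(X_1)+h(X_5)$ and $h(\xs{25})\le h(X_2)+h(X_5)$.
\item Submodularity and monotonicity give $h(\xs{35})+h(\xs{45})\ge h(\xs{345})+h(X_5)\ge h(\xs{34})+h(X_5)$.
\end{itemize}
Doubling the last inequality, the $2h(X_5)$ terms cancel against the two subadditivity bounds. Adding this to the inequality from Step~1 cancels $h(X_1)+h(X_2)$ and gives exactly \eqref{eq:ing-core}.

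\emph{Step 3: the numerical bound.} For an edge dominated $h$, the left side of \eqref{eq:ing-core} has total coefficient $1+1+3+1+1=7$ on the sets $U_1,\dots,U_5$. It is therefore at least $7\min_i h(U_i)$. The right side is a combination of nine edge values (counted with multiplicity) of central edges, so it is at most $9$. This gives $\min_i h(U_i)\le 9/7$.

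There is no real obstacle here. The only point needing care is choosing Step~2 so that the singleton terms $h(X_1),h(X_2)$ from $I_h(X_1;X_2)$ are absorbed exactly, with no leftover $h(X_5)$ terms. The pairing of the doubled $\xs{35},\xs{45}$ submodularity with the two subadditivity bounds does exactly this.
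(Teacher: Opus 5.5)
Your proposal is correct and follows essentially the same route as the paper. It combines Ingleton's inequality at $(X_1,X_2,X_3,X_4)$ with the identity \eqref{eq:shannon-core}, adds the same subadditivity bounds for $\xs{15},\xs{25}$ and twice the submodularity/monotonicity step through $\xs{345}$, and finishes with the same $7$ versus $9$ coefficient count.
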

\begin{proof}
The inequality follows immediately by adding up the following
inequalities:
\begin{align*}
  & 1 \times & h(\xs{123})+&h(\xs{124})+h(\xs{34})+h(X_1)+h(X_2)\\
  &          & \le\ & h(\xs{12})+h(\xs{13})+h(\xs{14})+h(\xs{23})+h(\xs{24})
  && \text{from \eqref{eq:shannon-core} and \eqref{eq:ing}}\\
  & 1 \times & h(\xs{15})\le\ & h(X_1)+h(X_5)
  && \text{from \eqref{eq:normalized}, \eqref{eq:submodular}}\\
  & 1 \times & h(\xs{25})\le\ & h(X_2)+h(X_5)
  && \text{from \eqref{eq:normalized}, \eqref{eq:submodular}}\\
  & 2 \times & h(\xs{345})+h(X_5)\le\ & h(\xs{35})+h(\xs{45})
  && \text{from \eqref{eq:submodular}}\\
  & 2 \times & h(\xs{34})\le\ & h(\xs{345})
  && \text{from \eqref{eq:monotone}}
\end{align*}
Each inequality needs to be multiplied by the factor on the left, then
added up.  The terms $h(X_1)$, $h(X_2)$, $2h(X_5)$ and $2h(\xs{345})$
occur on both sides and cancel, and what remains is
inequality~\eqref{eq:ing-core}.  The first line above uses Ingleton's
inequality~\eqref{eq:ing}, which is valid because $h$ is a linear
polymatroid.  All the others are basic Shannon inequalities, valid for
all polymatroids.  Our discussion also implies that the right-hand
side of \eqref{eq:ing-core} minus its left-hand side is
\begin{equation}\label{eq:ing-cert}
\Ing_h(X_1,X_2;X_3,X_4)+I_h(X_1;X_5)+I_h(X_2;X_5)+2I_h(X_3;X_4\mid X_5)+2H_h(X_5\mid\xs{34}).
\end{equation}

For the consequence, assume $h(e)\le1$ for every $e\in E_0$.  The
LHS of~\eqref{eq:ing-core} is
$h(U_1)+h(U_2)+3h(U_3)+h(U_4)+h(U_5)\ge7\min_i h(U_i)$, while the RHS
is  $\leq 1+1+1+1+1+2+2=9$.
\end{proof}

The bound $9/7$ is attained, by an explicit configuration of subspaces.

\begin{lemma}\label{lem:ingleton-tight}
  There exists an edge dominated, linear polymatroid
  $g_L \in \text{Lin}_X$ on the central graph $G_0$ such that
  $g_L(U_i)=9/7$ for $i=1,\ldots,5$.

Furthermore, the polymatroid $g_L$ satisfies:
\begin{equation}\label{eq:linear-singletons}
g_L(X_1)=g_L(X_2)=\frac47<\frac23,\qquad
g_L(X_3)=g_L(X_4)=\frac9{14}<\frac23,\qquad
g_L(X_5)=\frac57<\frac34 .
\end{equation}
\end{lemma}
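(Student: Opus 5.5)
The plan is to build $g_L$ explicitly from pieces we already understand: a nonnegative combination of step functions $h^K$, plus a multiple of the planar rank function of Section~\ref{sec:three:special} placed on the triple $(X_3,X_4,X_5)$. This triple is chosen on purpose. The normal bound \eqref{eq:normal-core} charges $h(\xs{34})$ with coefficient $5$ against $3h(\xs{35})+3h(\xs{45})$, while the linear bound \eqref{eq:ing-core} charges it only with coefficient $3$ against $2h(\xs{35})+2h(\xs{45})$. So the gain of linear over normal should come from making $h(\xs{34})$ large relative to the edges $\xs{35},\xs{45}$. This is exactly what the configuration $\varphi(X_3)=\{(1,0)\}$, $\varphi(X_4)=\{(0,1)\}$, $\varphi(X_5)=\{(1,1)\}$, $\varphi(X_1)=\varphi(X_2)=\emptyset$ does: its rank function $r$ is $1$ on each singleton of $\xs{345}$ and $2$ on each pair. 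The sum of a normal polymatroid (which is linear, via disjoint coordinates) and a rank function is again a rank function of a direct sum of spaces. Hence any $g_L=\sum_K m_K h^K + c\,r$ with $m_K,c\ge0$ lies in $\text{Lin}_X$.

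To pin down the coefficients, I would use the equality conditions read off from the certificate \eqref{eq:ing-cert}, together with tightness of edge domination. All seven central edges must have value $1$, all five $U_i$ must have value $9/7$, and every term of \eqref{eq:ing-cert} must vanish:
\begin{itemize}
\item $\Ing_h(X_1,X_2;X_3,X_4)=0$;
\item $I_h(X_1;X_5)=I_h(X_2;X_5)=0$;
\item $I_h(X_3;X_4\mid X_5)=0$ (automatic for $r$, and it restricts which $h^K$ may be used);
\item $H_h(X_5\mid \xs{34})=0$.
\end{itemize}
For the step functions these conditions exclude any $K$ meeting both $\xs{12}$ and $X_5$, and any $K$ containing $X_5$ but disjoint from $\xs{34}$. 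The candidates are therefore essentially the ones in \eqref{eq:tight-atoms}, except $\xs{35},\xs{45}$ may now be supplemented by $r$, possibly together with $\xs{345}$.

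Scaling everything by $14$, the target values are:
\begin{itemize}
\item singletons $8,8,9,9,10$ (from \eqref{eq:linear-singletons});
\item edges $14$;
\item $U_i$ equal to $18$.
\end{itemize}
This gives a small linear system in $c$ and the $m_K$. I would solve it, expecting a solution with $c>0$, roughly $c$ around $4$ on the $14$-scale, and check nonnegativity. Then I would verify all seven edge values and all five $U_i$ values directly, and read off the singleton values, confirming \eqref{eq:linear-singletons}.

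The main obstacle is that the system might force a negative coefficient, because the $h^K$ plus a single plane on $\xs{345}$ might not be flexible enough to also realize the required $I_h(X_1;X_2)>0$ that Ingleton tightness demands together with the $U_1,U_2$ values. If that happens, my fallback is to add a second planar configuration on $(X_1,X_2,\cdot)$, or a generic subspace arrangement. In the latter, $X_1,X_2$ share a $2$-dimensional subspace, $X_5\subseteq \Span(X_3\cup X_4)$ in general position with respect to $X_1,X_2$, and $\dim(X_1\cap X_3)=3$, etc. I would then compute ranks via a dimension count and confirm the values with \texttt{verify.py}.
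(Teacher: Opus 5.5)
\textbf{Verdict: genuine gap.} Your main ansatz cannot reach $9/7$, and your fallback does not construct a witness.

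\textbf{Why the main ansatz fails.} The failing step is your claim that $I_h(X_3;X_4\mid X_5)=0$ is automatic for $r$. For the plane on $(X_3,X_4,X_5)$,
\[
I_r(X_3;X_4\mid X_5)=r(\xs{35})+r(\xs{45})-r(\xs{345})-r(X_5)=2+2-2-1=1 .
\]
Each term of \eqref{eq:ing-cert} is nonnegative on every linear summand separately. An edge dominated witness with all $U_i$ equal to $9/7$ forces the whole certificate to vanish. Hence $c=0$, and you are left with a normal polymatroid, which Lemma~\ref{lem:normal-core} caps at $14/11$.

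The failure is structural, not a matter of choosing atoms. The function $r$ itself satisfies \eqref{eq:normal-core}: its left side is $1+1+5\cdot2+2\cdot1+2\cdot1=16$ and its right side is $2\cdot(1+1+1+1)+3\cdot2+3\cdot2=20$. So every nonnegative combination $\sum_K m_Kh^K+c\,r$ satisfies \eqref{eq:normal-core}, and once edge dominated it has $\min_i h(U_i)\le14/11<9/7$.

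Your heuristic also puts the non-normality in the wrong triple. For the paper's arrangement \eqref{eq:linear-witness}, the restriction to $\{X_3,X_4,X_5\}$ is normal, since $V_3\oplus V_4=W$ and $V_5=(V_5\cap V_3)+(V_5\cap V_4)$. Inequality \eqref{eq:normal:polymatroids} fails instead on $\{X_1,X_2,X_5\}$, where $I_r(X_1;X_5)=I_r(X_2;X_5)=0$ but $I_r(\xs{12};X_5)=14+10-18=6$.

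\textbf{Why the fallback is not a proof.} The intersection pattern it needs is far from generic: $\dim V_1\cap V_2=2$ and $\dim V_1\cap V_3=3$, together with $\dim V_5\cap V_3=\dim V_5\cap V_4=5$, $V_1\cap V_5=0$ and $V_1+V_2+V_3=W$. ``General position plus a dimension count'' therefore does not show that such an arrangement exists. Exhibiting one is the whole content of the lemma. The paper does this with explicit bases in $\R^{18}$ and checks every rank, including $V_1+V_2+V_3=W$ via the projection $\pi_4$.

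\textbf{How to repair your approach.} Your decomposition idea works if the plane is placed on $(X_1,X_2,X_5)$ and inside $X_3$. Let $\tau$ be the rank function of $X_1\mapsto e_1$, $X_2\mapsto e_2$, $X_5\mapsto e_1+e_2$, $X_3\mapsto\{e_1,e_2\}$, $X_4\mapsto\emptyset$ in $\R^2$. Let $\tau'$ be the same with $X_3$ and $X_4$ exchanged. Every term of \eqref{eq:ing-cert} vanishes on $\tau$, while $\tau$ violates \eqref{eq:normal-core} with $22>21$. One checks that
\[
g=\tfrac1{14}\bigl(2h^{\xs{1}}+2h^{\xs{2}}+h^{\xs{3}}+h^{\xs{4}}+h^{\xs{123}}+h^{\xs{124}}+3h^{\xs{35}}+3h^{\xs{45}}+2\tau+2\tau'\bigr)
\]
has value $1$ on all seven central edges and $9/7$ on all five $U_i$. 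Its singleton values are $4/7,4/7,9/14,9/14,5/7$. It is linear because $14g$ is an integer combination of rank functions. This gives a smaller witness than the paper's. In fact, the block of the paper's arrangement spanned by $a_1,a_7,a_8$ has rank function exactly $h^{\xs{123}}+\tau$.
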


\begin{proof}
  Consider the vector space $W=\R^{18}$ with basis
  $a_1,\ldots,a_9,b_1,\ldots,b_9$.  We define $g_L := \frac{1}{14}r$,
  where $r$ is the rank function of the following coverage
  function:\footnote{More formally, the coverage function $\varphi$ is
    $\varphi(X_3):=\set{a_1, \ldots, a_9}$,
    $\varphi(X_4):=\set{b_1, \ldots, b_9}$, etc: what we show in
    Eq.~\eqref{eq:linear-witness} is directly
    $V_i := \Span(\varphi(X_i))$.}
\begin{equation}\label{eq:linear-witness}
\begin{aligned}
V_3&=\langle a_1,\ldots,a_9\rangle,\qquad
V_4=\langle b_1,\ldots,b_9\rangle,\qquad
V_5=\langle a_1,\ldots,a_5,b_1,\ldots,b_5\rangle,\\
V_1&=\langle a_6,\ a_7,\ a_8,\ b_6,\ b_7,\ b_8,\
   a_9+a_2+b_3,\ b_9+a_3+b_2\rangle,\\
V_2&=\langle a_9,\ a_7+a_1,\ a_8+a_1,\ b_9,\ b_7+b_1,\ b_8+b_1, a_6+a_4+b_5,\ b_6+a_5+b_4\rangle.
\end{aligned}
\end{equation}
As in Section~\ref{sec:basic}, the rank function is
$r(S):=\dim\bigl(\sum_{X_i\in S}V_i\bigr)$.

Observe that $V_1,\ldots,V_5$ have dimensions $8$, $8$, $9$, $9$, $10$
respectively, proving the bounds~\eqref{eq:linear-singletons}.

  For the remaining values, note first that $W=V_3+ V_4$, so
  $r(\xs{34})=18$; and $V_5=(V_5\cap V_3)+(V_5\cap V_4)$ with
  both summands of dimension $5$, so
  $r(\xs{35})=r(\xs{45})=9+10-5=14$.  Reducing modulo $V_5$ sends the
  eight displayed generators of $V_1$ to
  $\bar a_6,\bar a_7,\bar a_8,\bar b_6,\bar b_7,\bar b_8,\bar a_9,\bar
  b_9$, a basis of $W/V_5$, so $V_1\cap V_5=0$ and
  $r(\xs{15})=8+10=18$; the generators of $V_2$ reduce to the same
  basis in a different order, so $r(\xs{25})=18$ likewise.  Next
\[
V_1\cap V_3=\langle a_6,a_7,a_8\rangle,\qquad
V_2\cap V_3=\langle a_9,\ a_1+a_7,\ a_1+a_8\rangle,
\]
both of dimension $3$, with the mirror statements for $V_4$ obtained by
exchanging $a_i\leftrightarrow b_i$; hence
$r(\xs{13})=r(\xs{14})=r(\xs{23})=r(\xs{24})=8+9-3=14$.  Also
$V_1\cap V_2=\langle a_7-a_8,\ b_7-b_8\rangle$ has dimension $2$, so
$r(\xs{12})=8+8-2=14$ and every central edge has $g_L$-value $1$.
Finally, let $\pi_4:W\to V_4$ be the projection defined by
$\pi_4(a_i)=0$ and $\pi_4(b_i)=b_i$. The projections of the
generators of $V_1$ and $V_2$ span $V_4$: they include
$b_3,b_5,b_6,b_7,b_8,b_9$, and the remaining basis vectors are
obtained from
\[
b_1=(b_7+b_1)-b_7,\qquad
b_2=(b_9+b_2)-b_9,\qquad
b_4=(b_6+b_4)-b_6.
\]
Since $\ker\pi_4=V_3$, it follows that $V_1+V_2+V_3=W$.
Exchanging $a_i$ and $b_i$ gives $V_1+V_2+V_4=W$.
Hence $r(\xs{123})=r(\xs{124})=18$, so
$r(U_i)=18$ and $g_L(U_i)=18/14=9/7$ for all five $i$.
\end{proof}

Since $9/7>14/11$, the configuration \eqref{eq:linear-witness} is
linear but not normal; we return to this in
Section~\ref{sec:linear-upper}.

The polymatroid $g_L$ is not a normal polymatroid.  This can be seen
by observing that its restriction to the set $\set{X_1,X_2,X_5}$
violates \eqref{eq:normal:polymatroids} at $K=\set{X_1,X_2,X_5}$.  We
check this inequality for the rank function:
\[
\begin{aligned}
&r(X_1)+r(X_2)+r(X_5)-r(\xs{12})-r(\xs{15})-r(\xs{25})+r(\xs{125})\\
&\qquad{}=8+8+10-14-18-18+18=-6<0,
\end{aligned}
\]
using $r(\xs{125})=18$, which follows from $r(\xs{15})=18=\dim W$ by
monotonicity.  This is the same computation that gave $-1$ for the
linear polymatroid in Section~\ref{sec:three:special}.

\subsection{Almost Entropic Polymatroids}

\label{subsec:zy-core}

\begin{lemma}\label{lem:zy-core}
  Every almost entropic polymatroid $h \in \bar \Gamma_X^*$ satisfies
\begin{align}
5h(\xs{123})&+5h(\xs{124})+11h(\xs{34})+4h(\xs{15})+4h(\xs{25}) \notag\\
&\le 6h(\xs{12})+4h(\xs{13})+4h(\xs{14})+4h(\xs{23})+4h(\xs{24})+8h(\xs{35})+8h(\xs{45}).
\label{eq:zy-core}
\end{align}
In particular, if $h$ is an edge dominated, almost entropic polymatroid on
the central graph $G_0$, then $\min_i h(U_i)\le38/29$.
\end{lemma}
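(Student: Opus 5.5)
The plan is to mimic the proof of Lemma~\ref{lem:ingleton}: start from identity~\eqref{eq:shannon-core}, bound its bracket from below using the Zhang--Yeung inequality~\eqref{eq:zy}, and then absorb the leftover singleton terms with basic Shannon inequalities. Write $B:=I_h(X_1;X_2\mid X_3)+I_h(X_1;X_2\mid X_4)+I_h(X_3;X_4)$ for the bracket and $S:=h(\xs{13})+h(\xs{14})+h(\xs{23})+h(\xs{24})$.

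\emph{Step 1: two instances of \eqref{eq:zy}.} Because \eqref{eq:zy} is not symmetric in $A,B$, I will add $\ZY_h(X_1,X_2;X_3,X_4)\ge0$ and $\ZY_h(X_1,X_2;X_4,X_3)\ge0$. The sum reads
\[
2I_h(X_1;X_2)\le 3B-I_h(X_3;X_4)+E,
\]
where
\[
E=I_h(X_3;X_2\mid X_1)+I_h(X_3;X_1\mid X_2)+I_h(X_4;X_2\mid X_1)+I_h(X_4;X_1\mid X_2).
\]
Expanding the four terms gives
\[
E=S+4h(\xs{12})-2h(\xs{123})-2h(\xs{124})-2h(X_1)-2h(X_2).
\]
Expanding $I_h(X_1;X_2)$ and $I_h(X_3;X_4)$ as well, I obtain the lower bound
\[
3B\ \ge\ 4h(X_1)+4h(X_2)-6h(\xs{12})-S+2h(\xs{123})+2h(\xs{124})+h(X_3)+h(X_4)-h(\xs{34}).
\]

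\emph{Step 2: combine with the identity.} Multiply \eqref{eq:shannon-core} by $3$, so that $3h(\xs{123})+3h(\xs{124})+3h(\xs{34})+3B=3S$, and insert the bound from Step~1. After collecting terms this yields
\[
5h(\xs{123})+5h(\xs{124})+2h(\xs{34})+h(X_3)+h(X_4)+4h(X_1)+4h(X_2)\le 6h(\xs{12})+4S.
\]

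\emph{Step 3: Shannon bookkeeping.} Next I cancel the singletons, using the same pattern as in Lemma~\ref{lem:ingleton}. I add $4\times\bigl(h(\xs{15})\le h(X_1)+h(X_5)\bigr)$ and $4\times\bigl(h(\xs{25})\le h(X_2)+h(X_5)\bigr)$. These cancel $4h(X_1)+4h(X_2)$ and leave $8h(X_5)$ on the right. I then add $8\times\bigl(h(\xs{345})+h(X_5)\le h(\xs{35})+h(\xs{45})\bigr)$ and $8\times\bigl(h(\xs{34})\le h(\xs{345})\bigr)$, which contribute $8h(\xs{34})$. Finally I add $h(\xs{34})\le h(X_3)+h(X_4)$ to cancel the remaining $h(X_3)+h(X_4)$. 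The total coefficient on $h(\xs{34})$ is then $2+8+1=11$, and the result is exactly \eqref{eq:zy-core}.

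\emph{Step 4: the consequence.} The left-hand coefficients sum to $5+5+11+4+4=29$, so the left-hand side of \eqref{eq:zy-core} is at least $29\min_i h(U_i)$. Under edge domination every term on the right is an edge value, so the right-hand side is at most $6+16+16=38$.

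The main obstacle is Step~1. One has to see that \eqref{eq:zy} must be applied in both orders and the two copies summed. The resulting surplus terms of \eqref{eq:zy-vs-ing} must then be expanded correctly, so that the unwanted pieces of $E$ reduce to $S$ and $h(\xs{12})$ terms and the singletons $h(X_1),h(X_2)$, all of which Steps~2--3 can absorb. Everything after that is mechanical cancellation, which I would double-check coefficient by coefficient or with the accompanying \texttt{verify.py} script.
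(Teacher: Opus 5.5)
Your proof is correct and takes the same route as the paper: you use the two Zhang--Yeung instances $(X_1,X_2,X_3,X_4)$ and $(X_1,X_2,X_4,X_3)$, together with the same five Shannon inequalities with multiplicities $1,4,4,8,8$. Because \eqref{eq:shannon-core} is an exact identity, your Steps~1--2 just reproduce the sum of the paper's two expanded Zhang--Yeung lines, so the detour through the identity (multiplied by $3$) changes nothing of substance.
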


\begin{proof}
Apply \eqref{eq:zy} twice, with $(X,Y,A,B)$ equal to
$(X_1,X_2,X_3,X_4)$ and to $(X_1,X_2,X_4,X_3)$; the two instances play
the symmetric roles that the single instance of \eqref{eq:ing} played
above, and their sum replaces $\Ing_h(X_1,X_2;X_3,X_4)$.  The
inequality follows by adding up the following inequalities:
\begin{align*}
  & 1 \times & & 4h(\xs{123})+h(\xs{124})+h(\xs{34})+2h(X_1)+2h(X_2)+h(X_3)
  && \text{from \eqref{eq:zy}}\\
  &          & \le\ & 3h(\xs{12})+3h(\xs{13})+3h(\xs{23})+h(\xs{14})+h(\xs{24})\\
  & 1 \times & & h(\xs{123})+4h(\xs{124})+h(\xs{34})+2h(X_1)+2h(X_2)+h(X_4)
  && \text{from \eqref{eq:zy}}\\
  &          & \le\ & 3h(\xs{12})+3h(\xs{14})+3h(\xs{24})+h(\xs{13})+h(\xs{23})\\
  & 1 \times & h(\xs{34})\le\ & h(X_3)+h(X_4)
  && \text{from \eqref{eq:normalized}, \eqref{eq:submodular}}\\
  & 4 \times & h(\xs{15})\le\ & h(X_1)+h(X_5)
  && \text{from \eqref{eq:normalized}, \eqref{eq:submodular}}\\
  & 4 \times & h(\xs{25})\le\ & h(X_2)+h(X_5)
  && \text{from \eqref{eq:normalized}, \eqref{eq:submodular}}\\
  & 8 \times & h(\xs{345})+h(X_5)\le\ & h(\xs{35})+h(\xs{45})
  && \text{from \eqref{eq:submodular}}\\
  & 8 \times & h(\xs{34})\le\ & h(\xs{345})
  && \text{from \eqref{eq:monotone}}
\end{align*}
Each inequality needs to be multiplied by the factor on the left, then
added up.  The terms $4h(X_1)$, $4h(X_2)$, $h(X_3)$, $h(X_4)$,
$8h(X_5)$ and $8h(\xs{345})$ occur on both sides and cancel, and what
remains is inequality~\eqref{eq:zy-core}.  Equivalently, the right-hand
side of \eqref{eq:zy-core} minus its left-hand side is
\begin{equation}\label{eq:zy-cert}
\begin{aligned}
&\ZY_h(X_1,X_2;X_3,X_4)+\ZY_h(X_1,X_2;X_4,X_3)+I_h(X_3;X_4)\\
&\qquad+4I_h(X_1;X_5)+4I_h(X_2;X_5)+8I_h(X_3;X_4\mid X_5)+8H_h(X_5\mid\xs{34}),
\end{aligned}
\end{equation}
the sum of the slacks of the seven inequalities above, with the same
multiplicities.  Only the first two use \eqref{eq:zy}; the other five
are Shannon inequalities.

Note the parallel with \eqref{eq:ing-cert}: the four Shannon terms
$H_h(X_5\mid\xs{34})$, $I_h(X_1;X_5)$, $I_h(X_2;X_5)$, $I_h(X_3;X_4\mid X_5)$ occur with
exactly four times their earlier weights, and the extra $I_h(X_3;X_4)$
together with the replacement of $\Ing$ by $\ZY$ is what forces the
coefficients $5,5,11$ and $6$ in place of $4,4,12$ and $4$.

For the consequence, assume $h(e)\le1$ for every $e\in E_0$.  The
coefficients on the left of \eqref{eq:zy-core} sum to $29$, so its
left-hand side is at least $29\min_i h(U_i)$; the coefficients on the
right sum to $38$, and every set there is a central edge, so its
right-hand side is at most $38$.
\end{proof}

The upper bound $38/29$ is not known to be attained.  The next lemma
gives a lower bound of $130/101$.

\begin{lemma}\label{lem:zy-tight}  Define the number 
  $a:=H_2(1/4)=2-\frac34\log_2 3$.  There exists an edge dominated,
  almost entropic polymatroid $g_E$ on the central graph $G_0$ such that
\begin{align}
g_E(e)&=1 \text{ for all edges } e\in E_0,\label{eq:core-edges}\\
g_E(U_1)=g_E(U_2)=g_E(U_3)&=\frac{130}{101},\qquad
g_E(U_4)=g_E(U_5)=\frac{122+10a}{101}>\frac{130}{101}.
\label{eq:core-large}
\end{align}

Furthermore, $g_E$ satisfies:
\begin{align}
g_E(X_1)=g_E(X_2)&=\frac{50+10a}{101}<\frac23,\qquad
g_E(X_3)=g_E(X_4)=\frac{65}{101}<\frac23,\notag\\
g_E(X_5)&=\frac{72}{101}<\frac34.
\label{eq:core-singletons}
\end{align}
\end{lemma}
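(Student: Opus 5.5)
The plan is to build $g_E$ as a nonnegative combination of two pieces. The first is a scaled copy of the Ingleton-violating entropic polymatroid $h_0$ of Section~\ref{sec:three:special}. The second is a linear (or normal) polymatroid with integer values that fixes up all remaining values. The validity argument is that $\bar\Gamma^*_X$ is a closed convex cone containing $\text{Lin}_X$ and hence $N_X$. Also, $h_0$ extends to five variables by letting $X_5$ be constant, which keeps it entropic. So any such combination is almost entropic, and only the values on singletons, edges and the sets $U_i$ need checking.

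\textbf{Step 1: the $h_0$ contribution.} The singleton values \eqref{eq:core-singletons} suggest the weight: $X_1,X_2$ carry $10a/101$, and $h_0(X_1)=h_0(X_2)=a$. So we take $g_E=\frac1{101}(10h_0+r)$, where $h_0$ is extended by a constant $X_5$. From the table of $R,S$ we compute the following values of $h_0$:
\begin{itemize}
\item $h_0(X_3)=h_0(X_4)=1$ and $h_0(X_5)=0$;
\item $h_0(\xs{12})=h_0(\xs{13})=h_0(\xs{14})=h_0(\xs{23})=h_0(\xs{24})=3/2$;
\item $h_0(\xs{35})=h_0(\xs{45})=1$;
\item $h_0(\xs{123})=h_0(\xs{124})=h_0(\xs{34})=2$;
\item $h_0(\xs{15})=h_0(\xs{25})=a$.
\end{itemize}
Multiplying by $10$, the piece $10h_0$ contributes $15$ to the five edges among $X_1,\dots,X_4$, $10$ to $\xs{35},\xs{45}$, $20$ to $U_1,U_2,U_3$, and $10a$ to $U_4,U_5$. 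This already explains the irrational term $10a$ in $g_E(U_4),g_E(U_5)$.

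\textbf{Step 2: the target for $r$.} The remainder $r$ must then be a polymatroid with the following values:
\begin{itemize}
\item $r(\xs{12})=r(\xs{13})=r(\xs{14})=r(\xs{23})=r(\xs{24})=86$ and $r(\xs{35})=r(\xs{45})=91$, so that every edge gets $g_E$-value $1$;
\item $r(U_1)=r(U_2)=r(U_3)=110$ and $r(U_4)=r(U_5)=122$;
\item $r(X_1)=r(X_2)=50$, $r(X_3)=r(X_4)=55$ and $r(X_5)=72$.
\end{itemize}
With these values, \eqref{eq:core-edges}, \eqref{eq:core-large} and \eqref{eq:core-singletons} all follow by adding the two contributions. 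The inequality $(122+10a)/101>130/101$ reduces to $a>0.8$, which holds since $a\approx0.811$.

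\textbf{Step 3: realizing $r$.} I would look for $r$ among nonnegative integer combinations of step functions $h^K$, possibly together with a subspace configuration in the style of \eqref{eq:linear-witness}. A purely normal $r$ might fail, because Lemma~\ref{lem:normal-core} constrains such profiles. Realizing $r$ is therefore a small integer linear program in the coefficients $m_K$ (or subspace dimensions), and I would solve it first with step functions only, using the tight atoms \eqref{eq:tight-atoms} and the pattern of $g_N$ as a starting guess. If it is infeasible, I would add a copy of the linear configuration of Lemma~\ref{lem:ingleton-tight}, suitably rescaled, to supply the non-normal excess on $\xs{15},\xs{25}$. The final verification is a finite computation of the listed values, in the same style as the proofs of Lemmas~\ref{lem:normal-tight} and~\ref{lem:ingleton-tight}.

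\textbf{Main obstacle.} The main obstacle is Step 3: choosing $r$ so that \emph{every} edge equals exactly $86$ or $91$ while all three of $U_1,U_2,U_3$ reach $110$. This is an exact, integral feasibility problem with no slack, and the split of $10h_0$ versus $r$ may need adjusting. If the weight $10$ does not lead to a feasible $r$, I would go back and re-tune it together with $r$.
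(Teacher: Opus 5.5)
Your Steps 1 and 2 coincide with the paper's proof. It also writes $101\,g_E=10h_0+r$ with $X_5$ constant in $h_0$, and your target values for $r$ are exactly those of its remainder: $50,55,72$ on singletons, $86,91$ on edges, and $110,122$ on the $U_i$. The gap is Step 3, which is the whole substance of the lemma: you never exhibit $r$.

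Moreover, both search spaces you name provably contain no solution. A normal $r$ is impossible, because your targets violate \eqref{eq:normal-core}. Its left-hand side is $110+110+5\cdot110+2\cdot122+2\cdot122=1258$, while its right-hand side is $2\cdot4\cdot86+3\cdot2\cdot91=1234$. Your fallback also fails. Write $r=c\,r_L+n$, with $r_L$ the rank function of \eqref{eq:linear-witness} and $n$ normal. Since $r_L$ violates \eqref{eq:normal-core} only by $198-196=2$ and $n$ satisfies it, you would need $c\ge12$. But $r(X_1)=50\ge c\,r_L(X_1)=8c$ forces $c\le 25/4$.

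So you need a genuinely different linear ingredient. A useful guide is that your targets make \eqref{eq:ing-core} tight, since both sides equal $794$. Hence every linear summand $q$ of $r$ must make each term of \eqref{eq:ing-cert} vanish:
$\Ing_q(X_1,X_2;X_3,X_4)=I_q(X_1;X_5)=I_q(X_2;X_5)=I_q(X_3;X_4\mid X_5)=H_q(X_5\mid\xs{34})=0$.

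The paper uses four tiny linear polymatroids of exactly this kind, on fresh bits:
$q_1=(0,0,0,R,R)$, $q_2=(0,R,0,S,S)$, $q_3=(R,R,0,(R,S),S)$, and $q_4=(R,S,0,(R,S),R\oplus S)$.
It averages each over $\Pi=\{\mathrm{id},(12),(34),(12)(34)\}$ to get $s_j$. This symmetrization is what collapses the constraints to the few symmetric values you listed, since each $\pi\in\Pi$ permutes the edges and the $U_i$ among themselves. It then takes $r=10s_1+24s_2+14s_3+24s_4$.

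With this explicit $r$, the rest of your argument goes through unchanged. That is, closure of $\bar\Gamma^*_X$ under nonnegative combinations gives almost-entropicity, and the strict inequalities follow from $4/5<a<1$.
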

\begin{proof}
  We will use the following inequalities below: $4/5<a<1$ and
  $3^5<2^8$.

  Let $R,S$ be independent fair bits, and let $h_0$ be the entropy
  function, in bits, of
\[
(X_1 := R\wedge S,\ X_2 := R\vee S,\ X_3 := R,\ X_4 := S,\ X_5 := 0).
\]
More precisely, $h_0$ is the entropy of the following probability
distribution:
\begin{align*}
  &
    \begin{array}{|cc||c|c|c|c|c|c} \cline{1-7}
R & S &  X_1 := R\wedge S&X_2:=R\vee S&X_3:=R&X_4:=S&X_5:=0&p \\ \cline{1-7}
0 & 0 & 0 & 0 & 0 & 0 & 0 & 1/4 \\
0 & 1 & 0 & 1 & 0 & 1 & 0 & 1/4 \\
1 & 0 & 0 & 1 & 1 & 0 & 0 & 1/4 \\
1 & 1 & 1 & 1 & 1 & 1 & 0 & 1/4 \\ \cline{1-7}
    \end{array}
\end{align*}
In particular,
$h_0(X_1)=h_0(X_2) = \frac{1}{4}\log_2 4 + \frac{3}{4}\log_2 (4/3) =
2-\frac{3}{4}\log_2 3 = a$, $h_0(X_3)=h_0(X_4)=1$ and $h_0(X_5)=0$.
Notice that the restriction of $h_0$ to $X_1, \ldots, X_4$ is
precisely the entropic, non-linear polymatroid defined in
Section~\ref{sec:three:special}.

Similarly, let $q_1,\ldots,q_4$ be the entropy functions of the
following five-tuples; the fair bits used in different independent
copies are always fresh ($R\oplus S$ denotes xor):
\[
\begin{array}{c|ccccc}
 &X_1&X_2&X_3&X_4&X_5\\\hline
q_1&0&0&0&R&R\\
q_2&0&R&0&S&S\\
q_3&R&R&0&(R,S)&S\\
q_4&R&S&0&(R,S)&R\oplus S
\end{array}
\]
Each $q_j$ is a linear polymatroid.  Let $\Pi$ consist of the
following four permutations of $\{1,\ldots,5\}$:
\[
\Pi=\{\mathrm{id},(12),(34),(12)(34)\}.
\]
For $\pi\in\Pi$ and $B\subseteq X$, write
$\pi(B):=\{X_{\pi(i)}:X_i\in B\}$.
Thus $q_j(\pi(B))$ evaluates $q_j$ on the set obtained by
relabeling the variables in $B$ according to $\pi$.
For example, if $\pi=(12)$ and $B=\xs{135}$, then
$\pi(B)=\xs{235}$. Define
\[
s_j(B)=\frac14\sum_{\pi\in\Pi}q_j(\pi(B)),\qquad
g_E=\frac{10h_0+10s_1+24s_2+14s_3+24s_4}{101}.
\]
Each $\pi\in\Pi$ maps central edges to central edges and permutes
the sets $U_1,\ldots,U_5$. For example, $(12)$ exchanges $U_4$
and $U_5$, while $(34)$ exchanges $U_1$ and $U_2$.
Consequently, averaging over these permutations preserves edge
domination and any common lower bound on the values at the $U_i$.
Since each $q_j\circ\pi$ and $h_0$ is an entropy function, $g_E$ is a
nonnegative combination of entropy vectors and hence almost entropic.
More explicitly,
\begin{equation}\label{eq:finite-entropy}
202g_E=20h_0+\sum_{\pi\in\Pi}
\bigl(5q_1\circ\pi+12q_2\circ\pi+7q_3\circ\pi+12q_4\circ\pi\bigr)
\end{equation}
is an actual finite entropy vector: concatenate the indicated integer
numbers of independent copies at each coordinate.

For transparency, all values claimed above are obtained from
the following entropy table:
\[
\begin{array}{c|ccccc}
B&h_0(B)&s_1(B)&s_2(B)&s_3(B)&s_4(B)\\\hline
X_1,X_2&a&0&1/2&1&1\\
X_3,X_4&1&1/2&1/2&1&1\\
X_5&0&1&1&1&1\\
\xs{12}&3/2&0&1&1&2\\
\xs{13},\xs{14},\xs{23},\xs{24}&3/2&1/2&1&3/2&3/2\\
\xs{35},\xs{45}&1&1&1&3/2&3/2\\
\xs{123},\xs{124}&2&1/2&3/2&3/2&2\\
\xs{34}&2&1&1&2&2\\
\xs{15},\xs{25}&a&1&3/2&2&2
\end{array}
\]
Substitution gives \eqref{eq:core-edges}--\eqref{eq:core-singletons};
every strict inequality there uses only $4/5<a<1$.
\end{proof}

This explicitly separates the entropic optimization on the central graph from its
Ingleton upper bound, since $130/101>9/7$; consistently with
Lemma~\ref{lem:zy-core}, $130/101<38/29$.
For the upper bound $38/29$ we verified with a linear programming
solver that it is tight for the relaxation from which we derived it:
$38/29$ is the exact value of $\max_h M(h)$ over the edge dominated
polymatroids on $G_0$ that satisfy every instance of the Zhang--Yeung
inequality~\eqref{eq:zy}.  That relaxation is a polyhedron containing
$\bar \Gamma_X^* \cap ED(G_0)$, so for $\Gamma_X^*$ or
$\bar \Gamma_X^*$ the value $38/29$ is only an upper bound, and we do
not know whether it is tight.  Nor can a linear programming solver
settle the question, because $\bar \Gamma_X^*$ is not a polyhedron:
$\max_{h \in \bar \Gamma_X^* \cap ED(G_0)} M(h)$ is not the optimum of
any linear program.

\subsection{General Polymatroids}

\label{subsec:subw-core}

\begin{lemma}\label{lem:subw-core}
  Every polymatroid $h \in \Gamma_X$ satisfies
\begin{align}
h(\xs{123})+h(\xs{124})+h(\xs{34})
&\le h(\xs{13})+h(\xs{23})+h(\xs{14})+h(\xs{24}).
\label{eq:subw-core}
\end{align}
In particular, if $h$ is an edge dominated polymatroid on the central
graph $G_0$ then $\min_i h(U_i)\le4/3$.
\end{lemma}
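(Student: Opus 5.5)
The inequality \eqref{eq:subw-core} is identity \eqref{eq:shannon-core} with the bracketed term dropped. So the plan is to invoke \eqref{eq:shannon-core} and check that the bracket is nonnegative for every polymatroid. The bracket is
\[
I_h(X_1;X_2\mid X_3)+I_h(X_1;X_2\mid X_4)+I_h(X_3;X_4),
\]
and each term is a (conditional) mutual information. By definition, $I_h(A;B\mid C)=h(AC)+h(BC)-h(ABC)-h(C)\ge0$ is exactly submodularity \eqref{eq:submodular} applied to $U=AC$ and $V=BC$. Here $U\cup V=ABC$, and $U\cap V=C$ because the singletons involved are distinct variables. The unconditional term $I_h(X_3;X_4)$ is the case $C=\emptyset$ and uses \eqref{eq:normalized}.

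So the bracket is $\ge0$, and moving it to the right-hand side of \eqref{eq:shannon-core} yields \eqref{eq:subw-core}. For completeness, I would verify \eqref{eq:shannon-core} itself by expanding the three terms:
\begin{align*}
I_h(X_1;X_2\mid X_3)&=h(\xs{13})+h(\xs{23})-h(\xs{123})-h(X_3),\\
I_h(X_1;X_2\mid X_4)&=h(\xs{14})+h(\xs{24})-h(\xs{124})-h(X_4),\\
I_h(X_3;X_4)&=h(X_3)+h(X_4)-h(\xs{34}).
\end{align*}
Summing these, the terms $h(X_3)$ and $h(X_4)$ cancel. Adding $h(\xs{123})+h(\xs{124})+h(\xs{34})$ to the sum then leaves exactly $h(\xs{13})+h(\xs{23})+h(\xs{14})+h(\xs{24})$.

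For the consequence, assume $h$ is edge dominated on $G_0$. The left-hand side of \eqref{eq:subw-core} is $h(U_1)+h(U_2)+h(U_3)\ge3\min_i h(U_i)$. The four sets on the right, $\xs{13},\xs{23},\xs{14},\xs{24}$, are central edges, so the right-hand side is at most $4$. Hence $\min_i h(U_i)\le4/3$. This bound does not even need $U_4$ or $U_5$.

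I expect no real obstacle here. The only care needed is to confirm that each bracket term is an instance of a basic Shannon inequality, and that the four right-hand sets all lie in $E_0$.
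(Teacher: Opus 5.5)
Your proof is correct and is essentially the paper's. The paper sums the three basic inequalities $h(\xs{123})+h(X_3)\le h(\xs{13})+h(\xs{23})$, $h(\xs{124})+h(X_4)\le h(\xs{14})+h(\xs{24})$ and $h(\xs{34})\le h(X_3)+h(X_4)$, which are exactly the nonnegativity of the three bracket terms you drop from \eqref{eq:shannon-core}, and it derives the $4/3$ consequence from $U_1,U_2,U_3$ just as you do.
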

\begin{proof}
The inequality follows immediately by adding up the following
inequalities:
\begin{align*}
  & 1 \times & h(\xs{123})+h(X_3)\le\ & h(\xs{13})+h(\xs{23})
  && \text{from \eqref{eq:submodular}}\\
  & 1 \times & h(\xs{124})+h(X_4)\le\ & h(\xs{14})+h(\xs{24})
  && \text{from \eqref{eq:submodular}}\\
  & 1 \times & h(\xs{34})\le\ & h(X_3)+h(X_4)
  && \text{from \eqref{eq:normalized}, \eqref{eq:submodular}}
\end{align*}

For the consequence, assume $h(e)\le1$ for every $e\in E_0$.  The LHS
of~\eqref{eq:subw-core} is $h(U_1)+h(U_2)+h(U_3)\ge3\min_i h(U_i)$,
while the RHS is $\le 1+1+1+1=4$ for any edge dominated polymatroid.
\end{proof}

The bound $4/3$ is attained, by an explicit polymatroid.

\begin{lemma}\label{lem:subw-tight}
  There exists an edge dominated polymatroid $g_P \in \Gamma_X$ on the
  central graph $G_0$ such that $g_P(U_i)=4/3$ for $i=1,\ldots,5$.

Furthermore, the polymatroid $g_P$ satisfies $g_P(X_i)=2/3$ for
$i=1,\ldots,5$, and $g_P(B)=4/3$ for every $B\subseteq X$ that is contained in
no central edge.
\end{lemma}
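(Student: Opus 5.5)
The natural candidate is $g_P:=\frac13 f_{G_0}$, where $f_{G_0}$ is the graph-induced function~\eqref{eq:function:f:g} of Section~\ref{sec:three:special}, instantiated at the central graph $G_0=K_5-\set{15,25,34}$. Explicitly, $g_P(\emptyset)=0$; $g_P(B)=2/3$ for singletons; $g_P(B)=1$ for $B\in E_0$; and $g_P(B)=4/3$ otherwise. The last case covers every $B$ with $|B|\ge2$ that is not a central edge, i.e.\ exactly the sets contained in no central edge. The claims of the lemma then mostly read off directly. Every central edge has value $1$, so $g_P$ is edge dominated. Each singleton has value $2/3$. Each $U_i$ is either a non-edge pair ($\xs{34},\xs{15},\xs{25}$) or a triangle ($\xs{123},\xs{124}$), so it is contained in no central edge and has value $4/3$.

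The only real work is to check that $f_G$ is a polymatroid for an arbitrary graph $G$, which the paper asserts without proof. Normalization is immediate. For monotonicity, the values $0,2,3,4$ are nondecreasing in the ``level'' $\emptyset\subset$ singleton $\subset$ edge $\subset$ other. One only needs that a superset of a set at a given level is at the same or a higher level. A superset of a singleton has size $\ge1$. A proper superset of an edge has size $\ge3$, so it has value $4$.

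Submodularity, $f(U)+f(V)\ge f(U\cup V)+f(U\cap V)$, is the step that needs care. I would do a case split on $U\cap V$.

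If $U\subseteq V$ or $V\subseteq U$, submodularity holds with equality. So assume both $U\setminus V$ and $V\setminus U$ are nonempty. Then $f(U\cup V)\le4$.

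If $U\cap V=\emptyset$, then $U$ and $V$ are nonempty, so $f(U)+f(V)\ge2+2=4\ge f(U\cup V)+0$.

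If $|U\cap V|=1$, then $|U|,|V|\ge2$, so $f(U)+f(V)\ge3+3=6=4+2$.

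If $|U\cap V|\ge2$, then $|U|,|V|\ge3$, so both values are $4$. The right-hand side is at most $4+4$.

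This exhausts the cases, so $f_G$, and hence its positive multiple $g_P$, lies in $\Gamma_X$.

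I expect no serious obstacle: the only subtle point is making sure the case $|U\cap V|=1$ uses that both $U,V$ strictly contain the intersection, giving value at least $3$ each.
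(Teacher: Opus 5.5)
Your proposal is correct and uses the same witness as the paper, $g_P=\frac13 f_{G_0}$, reading off the edge, singleton and $U_i$ values in the same way. Your case split on $U\subseteq V$ and on $|U\cap V|$ also supplies the monotonicity and submodularity check for $f_G$, which the paper only asserts (``one can check''), and that argument is sound.
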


\begin{proof}
  Recall from Eq.~\eqref{eq:function:f:g} in
  Section~\ref{sec:three:special} the polymatroid induced by a graph,
  and apply it to the central graph: set
  $g_P(U) \defeq \frac{1}{3}f_{G_0}(U)$ for all $U \subseteq X$.

  Observe that $g_P(e)=1$ for every edge $e \in E_0$, so $g_P$ is edge
  dominated, and that $g_P(X_i)=2/3$.  Since every central edge has
  two elements, the sets contained in some central edge are exactly
  $\varnothing$, the five singletons and the seven elements of $E_0$,
  so $g_P(B)=4/3$ precisely when $B$ is contained in no central edge.
  By \eqref{eq:the:sets:u} no $U_i$ is contained in a central edge.
  We also observe that $g_P(U_i)=4/3$ for $i=1,\ldots,5$.  This proves the
  lemma.
\end{proof}

\section{The Big Graph}
\label{sec:big-graph}

We now use the central bounds of Theorem~\ref{thm:central} to
prove the width separations on the big graph $G$. The scaffolding
introduced in Section~\ref{sec:graph} connects the values
$M(h)=\min_i h(U_i)$ to the values of bags in tree decompositions.

For the upper bounds, Lemma~\ref{lem:decomposition} constructs,
for every edge dominated polymatroid $h$, a tree decomposition
showing that
\[
\hwd_h(G)\le\max\{6,5+M(h)\}.
\]
Thus each central upper bound gives an upper bound on the
corresponding width of $G$.

For the lower bounds, Lemma~\ref{lem:bramble} identifies a bag
in every tree decomposition that contains some $U_i$ and meets
every edge joining an anchor to a connector.
For adaptive width, this forces a bag with at least $12$ vertices,
giving the lower bound $6$ under the uniform modular polymatroid
$h(B)=|B|/2$. For the other four widths, we extend each central
witness $g_0$ to an edge dominated polymatroid $F_{g_0}$ of the
same class on $G$. The extension ensures that the scaffolding
contributes $5$ to the value of the selected bag, so
Lemma~\ref{lem:witness-bound} gives
\[
\hwd_{F_{g_0}}(G)\ge5+M(g_0).
\]
Together, these bounds yield the separations stated in
Theorem~\ref{thm:main}.

Throughout this section we denote by $N[x]$ the \emph{closed}
neighborhood of a node $x$, that is, $\set{x}$ together with all
neighbors of $x$.
For every $a\in A_i$ and $1\le k\le r_i$, set
$K_{a,k}=\{a,p_{i,k}\}$.

\subsection{The Decomposition Lemma}

Recall from Section~\ref{sec:core-bounds} that
$M(h) = \min(h(U_1), h(U_2), h(U_3), h(U_4), h(U_5))$
(Eq.~\eqref{eq:mh}).

\begin{lemma}\label{lem:decomposition}
For every edge-dominated polymatroid $h$ on $G$,
\[
  \hwd_h(G)\le\max(6,\ 5+M(h)).
\]
\end{lemma}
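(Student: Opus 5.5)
The plan is to build, for each edge dominated polymatroid $h$ on $G$, an explicit tree decomposition that depends only on an index $i^*$ with $h(U_{i^*})=M(h)$. There are five cases, $i^*\in\{1,\dots,5\}$, reducible via the symmetry $(12)(34)$ of $G_0$, which also permutes the anchor and connector groups. Every bag will be bounded by one of two tools. The first is subadditivity, $h(B\cup C)\le h(B)+h(C)$. The second is the fractional edge cover inequality~\eqref{eq:cover} together with $h(e)\le 1$. A basic fact we use repeatedly is $h(A)\le 5$. This holds because $K_{2,2,2,2,2}$ has a perfect matching between distinct groups (pair $A_1$ with $A_2$ and $A_3$, and so on, around a $5$-cycle of groups).

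\textbf{Step 1 (root bag).} The root is the bag $R=A\cup U_{i^*}$, whose value is at most $h(A)+h(U_{i^*})\le 5+M(h)$. This is the only bag allowed to exceed $6$.

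\textbf{Step 2 (connector leaves).} Each connector $p=p_{j,k}$ receives the leaf bag $N[p]=\{p\}\cup A_j\cup U_j$. This bag is covered by the edges from $p$ to its at most $5$ neighbours, so its value is at most $5$. Each such leaf must hang off a bag containing $A_j\cup U_j$. For $j=i^*$ that bag is $R$.

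\textbf{Step 3 (the remaining sets $U_j$, $j\neq i^*$).} We still need a tree of bags that covers the central edges and contains, for each $j\neq i^*$, some bag $\supseteq A_j\cup U_j$. These bags must be glued to $R$ so that every anchor and every central vertex induces a connected subtree. Adding the cliques $U_j$ for $j\ne i^*$ to $G_0$ yields a chordal graph: $K_5$ minus the pair or triangle $U_{i^*}$. For example, when $i^*=3$ the result is $K_5-\{34\}$, with maximal cliques $\xs{1245}$ and $\xs{1235}$. I would use bags of the form
\[
S\cup\bigcup_{j:\,U_j\subseteq S}A_j ,
\]
where $S$ ranges over these maximal cliques. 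Such a bag is bounded by covering the anchors through a matching between distinct groups and covering $S$ by at most two central edges. For instance, $\xs{1245}\cup A_2\cup A_4\cup A_5$ is covered by $\xs{12},\xs{45}$ and three anchor edges, giving value at most $5$. The anchors must additionally be routed to $R$, so the bags on the path between $R$ and such a bag must carry the needed anchor groups.

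\textbf{Main obstacle.} The hard step is making the anchor connectivity compatible with the value bound $6$. A bag containing all of $A$ together with a $4$-subset of $X$ could cost up to $7$. I would therefore first try a path $R - R' - (\text{clique bags})$, where $R'=A\cup(\text{a central edge inside } U_{i^*}\cup\text{neighbours})$ has value at most $5+1=6$. From $R'$ one drops anchor groups one at a time, so that each further bag contains only those anchor groups whose $U_j$ lies in the current clique. Whenever this fails for a particular $i^*$, I would instead split the anchor groups between two children of $R$, one per maximal clique. I would then check each of the five cases by hand, or with the accompanying script, against the running-intersection property and the bound $\max(6,5+M(h))$.
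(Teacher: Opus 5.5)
\textbf{There is a genuine gap.} Your root bag $R=A\cup U_{i^*}$ is sound, and so are the connector leaves $N[p]$ and the anchor bound via a cross-group matching. They agree with the paper. But the heart of the lemma, a tree decomposition in which every other bag has value at most $6$, is never produced. Step~3 ends in a search plan (``whenever this fails \ldots\ check each of the five cases''), and the concrete designs you offer do not work.

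\emph{Cases $i^*\in\{1,2\}$.} Adding the cliques $U_j$, $j\neq i^*$, to $G_0$ gives all of $K_5$, not $K_5$ minus $U_{i^*}$. The triangles $\xs{123},\xs{124}$ already lie in $G_0$, and $U_3,U_4,U_5$ are exactly its three non-edges. Your only clique bag is then $X\cup A$. For $F_{g_N}$, where all five $U_i$ tie, this bag has value $18/11+5=73/11>69/11=\max(6,5+M(F_{g_N}))$.

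\emph{Case $i^*=3$.} Your clique bags are $Q_1=\xs{1235}\cup A_1\cup A_4\cup A_5$ and $Q_2=\xs{1245}\cup A_2\cup A_4\cup A_5$. In any tree decomposition containing $R,Q_1,Q_2$, the node lying on all three paths between them must contain
\[
(R\cap Q_1)\cup(R\cap Q_2)\cup(Q_1\cap Q_2)=X\cup A_1\cup A_2\cup A_4\cup A_5 .
\]
Take the modular $h(B)=|B|/2$, for which $M(h)=1$ and $i^*=3$ is a minimizer. This bag then has value $13/2>6$, and the cases $i^*=4,5$ fail in the same way. Making $Q_1,Q_2$ siblings below $R$ breaks connectivity, since $Q_1\cap Q_2\supseteq\xs{125}\not\subseteq R$.

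\emph{The intermediate bag $R'\supseteq A$.} Every vertex of $U_{i^*}$ lies on a central edge not inside $R$. So if the later central bags hang below $R'$, then $R'$ must contain $U_{i^*}$. Once $R'$ also holds a further central vertex, it has value $13/2$ for $h=|B|/2$ with $i^*=3$.

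\textbf{The missing idea} is to pay for each new central edge by discarding one anchor group, and to discard $A_{i^*}$ first. Once the connectors of group $i^*$ hang off $R$, the group $A_{i^*}$ is needed nowhere else. The construction then runs as follows.
\begin{itemize}
\item Follow $R$ by $C_2=(A\setminus A_{i^*})\cup U_{i^*}\cup U_j$, with $j=2$ if $i^*=1$ and $j=1$ otherwise. Then $U_{i^*}\cup U_j$ is $\xs{1234}$ (covered by $\xs{13},\xs{24}$) or $\xs{1235}$ (covered by $\xs{12},\xs{35}$), so $h(C_2)\le 4+2$. Hang the group-$j$ connectors on $C_2$.
\item Take $C_3=(A\setminus(A_{i^*}\cup A_j))\cup X$. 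Since $X$ is covered by $\xs{12},\xs{35},\xs{45}$, $h(C_3)\le 3+3$. Hang the remaining connectors on $C_3$.
\end{itemize}
Along the path $R,C_2,C_3$, central vertices are only added and anchors only removed, so connectivity is automatic. Your matching argument gives the anchor bounds $4$ and $3$ just as well as the fractional edge cover. The point is to settle group $j$ while the central part still costs only two edges. Then the one bag holding all of $X$ carries just three anchor groups, whereas your clique-based layout forces $X$ to coexist with four.
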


By Theorem~\ref{thm:central}, $\max_h M(h)\in [1,4/3]$ when $h$
ranges over any of the classes of polymatroids considered in this
paper.  Therefore, the dominant term for $\hwd_h$ is $5+M(h)$, a
quantity coming from the central graph $G_0$, for which we have
precise bounds from Theorem~\ref{thm:central}.  The contribution of
the rest of the big graph $G$ to $\hwd_h$ is at most $6$ and we will
ignore it when we compute the upper bounds of the various notions of
width.

\begin{proof} [Proof of Lemma~\ref{lem:decomposition}]
  It suffices to describe one tree decomposition $T$ for which
  $\max_{t\in V(T)}h(B_t) \leq \max(6,\ 5+M(h))$.  We show $T$ in
  Figure~\ref{fig:td-generic} and describe it here.  We will refer to
  the components of the big graph $G$ as defined in
  Section~\ref{sec:graph}: anchor nodes $A$, and connector nodes $p$.
  Choose $i\in\set{1,\ldots,5}$ minimizing $h(U_i)$, and take $j=2$ if $i=1$,
  otherwise $j=1$. Construct the following three  bags, call them
  \emph{main bags}, and connect them in a chain:
\[
C_1=A\cup U_i,\qquad
C_2=(A\setminus A_i)\cup(U_i\cup U_j),\qquad
C_3=(A\setminus(A_i\cup A_j))\cup X.
\]
For each connector $p_{i,\ell}$, $\ell=1,\ldots,r_i$, create a bag
$N[p_{i,\ell}]$ consisting of its closed neighborhood, and connect it
to $C_1$.  Similarly, for each connector in group $j$ create a bag
$N[p_{j,\ell}]$ and attach it to $C_2$. For all remaining connector
nodes $p$, create a bag $N[p]$ and attach it to $C_3$.  The resulting
tree is sketched in Figure~\ref{fig:td-generic}.  The complete tree
for the case $i=1$ is shown in Figure~\ref{fig:td-concrete}: it has
$20$ bags (ignore the function $F_E$ for
now).  This is a correct tree decomposition, because all neighbors of
$p_{k,\ell}$ are included in $A_k \cup U_k$, which, by
construction, belongs to the appropriate main bag $C_1, C_2$, or $C_3$
respectively: $A_i\cup U_i\subseteq C_1$, $A_j\cup U_j\subseteq C_2$,
and $A_k\cup U_k\subseteq C_3$ for $k\notin\{i,j\}$.  The first main
bag covers all anchor edges and the last covers all central
edges. Along the main path central vertices are only added and anchors
only removed, so all vertex occurrences are connected.

Recall that the anchor nodes consist of five sets:
$A = A_1 \cup \cdots \cup A_5$, and any two of them, $A_k$ and $A_{k'}$, induce
a complete bipartite graph, $K_{2,2}$, with $4$ edges.  Consider the
union of any $2 \le m\le 5$ of the anchor groups.  Their induced
subgraph has a fractional edge cover of weight $m$, because there are
$2m$ vertices, and each vertex has exactly $2(m-1)$ neighbors.  If we
give every edge a weight of $1/[2(m-1)]$, then every vertex is
covered.  There are $\binom{m}{2}\cdot 4 = 2m(m-1)$ edges in this
induced subgraph, so the total weight is
$2m(m-1)\cdot\frac{1}{2(m-1)}=m$.  By \eqref{eq:cover} with $m=5,4,3$,
for any edge dominated polymatroid $h$, the anchor parts of
$C_1, C_2, C_3$ have values at most $5,4,3$, more precisely
$h(A) \leq 5$, $h(A\setminus A_i)\leq 4$ and
$h(A\setminus(A_i \cup A_j))\leq 3$.

For $i=1,2,3$, the set $U_i\cup U_j=\xs{1234}$ is covered by $\xs{13},\xs{24}$;
for $i=4,5$, the set $U_i\cup U_j=\xs{1235}$ is covered by $\xs{12},\xs{35}$.
The whole central graph is covered by $\xs{12},\xs{35},\xs{45}$. Consequently
\[
h(C_1)\le5+h(U_i),\qquad h(C_2)\le6,\qquad h(C_3)\le6.
\]

It remains to compute $h(N[p])$ for every connector node $p$.  Each
connector has degree at most five, and its incident edges cover all
its neighbors, $N[p]$. Thus every connector bag $N[p]$ has
$h(N[p]) \leq 5$.  This completes the proof of
Lemma~\ref{lem:decomposition}.
\end{proof}

\begin{figure}[t]
\centering
\begin{tikzpicture}[y=1cm,x=1cm]
\node[mainbag] (C1) at (0,0)
  {$C_1=A\cup U_i$\\[1pt]
   \textcolor{black!70}{\scriptsize anchors $\le5$, central part $U_i$}\\[1pt]
   $h(C_1)\le5+h(U_i)$};
\node[mainbag] (C2) at (0,-2.45)
  {$C_2=(A\setminus A_i)\cup(U_i\cup U_j)$\\[1pt]
   \textcolor{black!70}{\scriptsize anchors $\le4$, central part in two central edges}\\[1pt]
   $h(C_2)\le4+2=6$};
\node[mainbag] (C3) at (0,-4.9)
  {$C_3=\bigl(A\setminus(A_i\cup A_j)\bigr)\cup X$\\[1pt]
   \textcolor{black!70}{\scriptsize anchors $\le3$, central part in three central edges}\\[1pt]
   $h(C_3)\le3+3=6$};
\draw[tlink] (C1) -- (C2);
\draw[tlink] (C2) -- (C3);
\def\lx{6.3}
\node[leafbag] (a1) at (\lx, 0.52) {$N[p_{i,1}]$};
\node          (a2) at (\lx, 0.02) {$\vdots$};
\node[leafbag] (a3) at (\lx,-0.52) {$N[p_{i,r_i}]$};
\node[leafbag] (b1) at (\lx,-1.93) {$N[p_{j,1}]$};
\node          (b2) at (\lx,-2.43) {$\vdots$};
\node[leafbag] (b3) at (\lx,-2.97) {$N[p_{j,r_j}]$};
\node[leafbag] (c1) at (\lx,-4.38) {$N[p_{k,1}]$};
\node          (c2) at (\lx,-4.88) {$\vdots$};
\node[leafbag] (c3) at (\lx,-5.42) {$N[p_{k,r_k}]$};
\foreach \s/\t in {C1/a1,C1/a3,C2/b1,C2/b3,C3/c1,C3/c3}
  \draw[tlink] (\s.east) -- (\t.west);
\node[bnote,anchor=west] at (7.45, 0.00) {the $r_i$ connectors\\of group $i$};
\node[bnote,anchor=west] at (7.45,-2.45) {the $r_j$ connectors\\of group $j$};
\node[bnote,anchor=west] at (7.45,-4.90) {the connectors of\\the other three groups};
\end{tikzpicture}
\caption{The tree decomposition of Lemma~\ref{lem:decomposition}:
a path of three main bags with the $17$ connector leaves hung off it.
Here $i$ minimizes $h(U_i)$ and $j=2$ if $i=1$, else $j=1$.
Every leaf is $N[p]=\{p\}\cup A_k\cup U_k$ for a connector
$p=p_{k,l}$; its own incident edges cover it, so
$h(N[p])\le\deg(p)\le5$.
Along the path central vertices are only added and anchors only removed,
which is what makes all vertex occurrences connected; $C_1$ carries all
anchor edges and $C_3$ all central edges.
Only the bound for $C_1$ can exceed $6$, and it does so by
$h(U_i)-1$.}
\label{fig:td-generic}
\end{figure}

\begin{figure}[tbp]
\centering
\begin{tikzpicture}[y=1cm,x=1cm]
\node[maxbag] (C1) at (0,0)
  {$C_1=A\cup\xs{123}$ \hfill\textcolor{black!60}{\scriptsize13 vertices}\\[1pt]
   \textcolor{black!70}{\scriptsize anchors $5$; central part $U_1=\xs{123}$}\\[1pt]
   $F_E(C_1)=5+\frac{130}{101}=\frac{635}{101}$};
\node[mainbag] (C2) at (0,-2.6)
  {$C_2=(A\setminus A_1)\cup\xs{1234}$ \hfill\textcolor{black!60}{\scriptsize12 vertices}\\[1pt]
   \textcolor{black!70}{\scriptsize anchors $4$; $\xs{1234}$ covered by $\xs{13},\xs{24}$}\\[1pt]
   $F_E(C_2)=4+\frac{154}{101}=\frac{558}{101}$};
\node[mainbag] (C3) at (0,-6.3)
  {$C_3=\bigl(A\setminus(A_1\cup A_2)\bigr)\cup X$ \hfill\textcolor{black!60}{\scriptsize11 vertices}\\[1pt]
   \textcolor{black!70}{\scriptsize anchors $3$; $X$ covered by $\xs{12},\xs{35},\xs{45}$}\\[1pt]
   $F_E(C_3)=3+\frac{154}{101}=\frac{457}{101}$};
\draw[tlink] (C1) -- (C2);
\draw[tlink] (C2) -- (C3);
\def\lx{6.4}
\foreach \k/\y in {1/0.44,2/0.00,3/-0.44}
  \node[leafbag=grpI] (g_E1\k) at (\lx,\y) {$N[p_{1,\k}]$};
\foreach \k/\y in {1/-2.16,2/-2.60,3/-3.04}
  \node[leafbag=grpII] (g_E2\k) at (\lx,\y) {$N[p_{2,\k}]$};
\foreach \k/\y in {1/-4.28,2/-4.72,3/-5.16}
  \node[leafbag=grpIII] (g_E3\k) at (\lx,\y) {$N[p_{3,\k}]$};
\foreach \k/\y in {1/-5.72,2/-6.16,3/-6.60,4/-7.04}
  \node[leafbag=grpIV] (g_E4\k) at (\lx,\y) {$N[p_{4,\k}]$};
\foreach \k/\y in {1/-7.60,2/-8.04,3/-8.48,4/-8.92}
  \node[leafbag=grpV] (g_E5\k) at (\lx,\y) {$N[p_{5,\k}]$};
\foreach \t in {g_E11,g_E12,g_E13} \draw[tlink] (C1.east) -- (\t.west);
\foreach \t in {g_E21,g_E22,g_E23} \draw[tlink] (C2.east) -- (\t.west);
\foreach \t in {g_E31,g_E32,g_E33,g_E41,g_E42,g_E43,g_E44,g_E51,g_E52,g_E53,g_E54}
  \draw[tlink] (C3.east) -- (\t.west);
\node[bnote,anchor=north west,text width=110mm] at (-3.1,-9.7)
 {Leaf bags:
  $N[p_{1,k}]=\{p_{1,k}\}\cup A_1\cup\xs{123}$,
  $N[p_{2,k}]=\{p_{2,k}\}\cup A_2\cup\xs{124}$,
  $N[p_{3,k}]=\{p_{3,k}\}\cup A_3\cup\xs{34}$,
  $N[p_{4,k}]=\{p_{4,k}\}\cup A_4\cup\xs{15}$,
  $N[p_{5,k}]=\{p_{5,k}\}\cup A_5\cup\xs{25}$.
  Every leaf has $F_E=\tfrac{231}{101}$ or $\tfrac{223+10a}{101}$,
  both below $\tfrac{7}{3}$.};
\end{tikzpicture}
\caption{The complete tree decomposition from
  Figure~\ref{fig:td-generic}, assuming $i=1$ (hence $j=2$), with the
  entropic witness $F_E$ of \eqref{eq:witness} evaluated on all $20$
  bags.  Leaf colors are the anchor-group colors of
  Figure~\ref{fig:graph32}.  The maximum is attained at the single bag
  $C_1$, giving $\hwd_{F_E}(G)=635/101$; every other bag is below $6$.
  For $i\in\{4,5\}$ the shape is the same, with $U_i$ as the central
  part of $C_1$, central part $\xs{1235}$ in $C_2$ covered by
  $\xs{12},\xs{35}$ instead of $\xs{13},\xs{24}$, and the leaf fans
  redistributed among the three hosts.}
\label{fig:td-concrete}
\end{figure}

\subsection{The Bramble Lemma}

For the lower bounds, we need to show that every tree decomposition has
a single bag that captures both a central contribution and a contribution
from the anchors and connectors. More precisely, the bag should contain
some $U_i$ and meet every anchor--connector pair $K_{a,k}$. We enforce
these requirements using a \emph{bramble}: a family of connected vertex
sets that pairwise intersect or are joined by an edge. We say that two
such sets \emph{touch}. Every tree decomposition has a bag meeting all
members of a bramble (see Bellenbaum and Diestel~\cite[proof of Theorem~5]{bellenbaum2002two}).
We recall the short argument below. Our family consists
of the anchor--connector pairs together with the complements $X\setminus e$
of the central edges. Meeting all these complements forces the central
part of the bag to contain some $U_i$.

\begin{lemma}\label{lem:bramble}
Every tree decomposition of $G$ has a bag $B$ such that $B\cap X$
contains some $U_i$ and $B$ meets every $K_{a,k}$.
\end{lemma}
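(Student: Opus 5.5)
We take as bramble $\mathcal B$ the family consisting of the $2\cdot 17=34$ anchor--connector pairs $K_{a,k}=\{a,p_{i,k}\}$ (with $a\in A_i$, $1\le k\le r_i$), together with the seven sets $X\setminus e$ for $e\in E_0$. The plan has three steps. First, show that $\mathcal B$ is a bramble: every member is connected, and any two members touch. Second, invoke the standard fact (Bellenbaum--Diestel) that every tree decomposition has a bag meeting all members of a bramble. Third, translate ``the bag meets every $X\setminus e$'' into ``the bag contains some $U_i$''.

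\textbf{Connectivity.} Each $K_{a,k}$ is an edge of $G$, since connectors are adjacent to both anchors of their group. Each $X\setminus e$ has three vertices, and we check the seven cases directly:
\begin{itemize}
\item $X\setminus\xs{12}=\xs{345}$ is connected via $\xs{35},\xs{45}$;
\item $X\setminus\xs{13}=\xs{245}$ is connected via $\xs{24},\xs{45}$;
\item $X\setminus\xs{14}=\xs{235}$ is connected via $\xs{23},\xs{35}$;
\item $X\setminus\xs{23}=\xs{145}$ is connected via $\xs{14},\xs{45}$;
\item $X\setminus\xs{24}=\xs{135}$ is connected via $\xs{13},\xs{35}$;
\item $X\setminus\xs{35}=\xs{124}$ is a triangle;
\item $X\setminus\xs{45}=\xs{123}$ is a triangle.
\end{itemize}

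\textbf{Touching.} There are three kinds of pairs.
\begin{itemize}
\item Two anchor--connector pairs $K_{a,k}$ and $K_{a',k'}$. If the anchors are in distinct groups, $aa'$ is an edge. If they are in the same group $i$, then $p_{i,k}$ is adjacent to $a'$ (and if also $k=k'$ the sets intersect).
\item Two central complements. Each has size $3$ inside the $5$-element set $X$, so any two intersect.
\item A mixed pair $K_{a,k}$ with $a\in A_i$ and $X\setminus e$. The connector $p_{i,k}$ is adjacent to all of $U_i$, so it suffices that $U_i\not\subseteq e$. This holds because no $U_i$ is contained in a central edge, so $U_i\cap(X\setminus e)\neq\emptyset$.
\end{itemize}
I would also recall the short proof of the Bellenbaum--Diestel fact to keep the paper self-contained. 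Orient each tree edge $st$ toward the side whose subtree contains some bramble member disjoint from $B_s\cap B_t$. Two touching connected sets cannot lie on opposite sides, so each edge receives at most one orientation. Following the orientations leads to a node $t$ with no outgoing edge, and then $B_t$ meets every member.

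\textbf{From complements to $U_i$.} Let $S=B\cap X$. Meeting every $X\setminus e$ means $S\not\subseteq e$ for every $e\in E_0$. So $S$ is contained in no central edge, and by the characterization stated with \eqref{eq:the:sets:u}, $S$ contains a minimal such set, namely some $U_i$. More concretely: if $S$ has size at most $2$ and lies in no edge, then $S$ is a non-edge pair, which is $U_3$, $U_4$ or $U_5$. Otherwise $S$ contains a set of size $3$ all of whose pairs are edges, which must be one of the triangles $U_1$ or $U_2$. Note that $S$ cannot be empty or a singleton, since those are contained in edges.

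\textbf{Main obstacle.} The only delicate point is the mixed touching case, which depends on no $U_i$ lying inside a central edge. The rest is bookkeeping.
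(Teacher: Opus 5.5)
Your proposal is correct and follows the paper's proof. It uses the same bramble $\mathcal B$, the same three touching cases (with the mixed case resting on $U_i\not\subseteq e$), and the same translation via minimality of the $U_i$. The only difference is that you recall the bramble--bag fact through the edge-orientation argument, whereas the paper shows that the subtrees $T_S=\{t: B_t\cap S\neq\varnothing\}$ pairwise intersect and applies the Helly property; either works.

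One small repair to your ``more concretely'' remark: a set $S$ with $|S|\ge 3$ need not contain a triangle (e.g.\ $S=\xs{345}$). The right split is whether $S$ contains a non-adjacent pair (then $S\supseteq U_3$, $U_4$ or $U_5$) or is a clique of $G_0$ (then $|S|\ge 3$ and $S\supseteq U_1$ or $U_2$). Your primary minimal-set argument is unaffected.
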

\begin{proof}
Consider the family
\[
\mathcal B=\{X\setminus e:e\in E_0\}
\cup\{K_{a,k}:a\in A_i,\ 1\le i\le5,\ 1\le k\le r_i\}.
\]
We first verify that $\mathcal B$ is a bramble. Each central complement
induces a path or a triangle, and each $K_{a,k}$ is an edge, so every
member is connected. There are three types of pairs to consider.
Two central complements intersect because each contains three of the
five central vertices. Next, consider two anchor--connector pairs
$K_{a,k}$ and $K_{a',\ell}$. If $a=a'$, they intersect at their common
anchor. If the anchors belong to different groups, the edge $\{a,a'\}$
joins the two pairs. If the anchors are distinct but belong to the same
group $A_i$, the edge $\{a,p_{i,\ell}\}$ joins them.
Finally, consider a central complement $X\setminus e$ and a pair
$K_{a,k}$ with $a\in A_i$. No $U_i$ is contained in a central edge,
as noted in Section~\ref{sec:graph}, so there is a vertex
$x\in U_i\setminus e$. The edge $\{x,p_{i,k}\}$ joins the complement
to the pair. Thus all members of $\mathcal B$ pairwise touch.

Now fix any tree decomposition $(T,(B_t)_{t\in V(T)})$ of $G$.
For each $S\in\mathcal B$, let
\[
T_S=\{t\in V(T):B_t\cap S\ne\varnothing\}.
\]
This is a nonempty connected subtree of $T$ as the bags containing each
vertex $v\in S$ form a nonempty connected subtree, and the subtrees
for adjacent vertices of $S$ intersect in a bag containing their edge.
Since $S$ is connected, the union of these vertex subtrees is connected.
Moreover, if two members $S,S'$ of $\mathcal B$ intersect at a vertex,
any bag containing that vertex belongs to both $T_S$ and $T_{S'}$.
If instead an edge joins $S$ to $S'$, a bag containing that edge belongs
to both subtrees. Hence the subtrees $T_S$ pairwise intersect.
The Helly property for subtrees of a tree states that a finite family
of pairwise intersecting subtrees has a common node. Applying it here
gives a node $t$ whose bag $B=B_t$ meets every member of $\mathcal B$.

In particular, for every $e\in E_0$ we have
\[
B\cap(X\setminus e)\ne\varnothing
\quad\Longleftrightarrow\quad
B\cap X\not\subseteq e.
\]
Thus the central part $B\cap X$ is contained in no central edge.
The sets $U_1,\ldots,U_5$ are exactly the minimal subsets of $X$ with
this property, as shown in Section~\ref{sec:graph}, so $B\cap X$
contains some $U_i$. The bag $B$ also meets every $K_{a,k}$, since
these pairs are members of $\mathcal B$.
\end{proof}

\subsection{The Exact Adaptive Width}

\begin{lemma}\label{lem:exact-adw}
The $32$-vertex graph $G$ satisfies $\adw(G)=6$.
\end{lemma}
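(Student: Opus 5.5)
We prove the two inequalities $\adw(G)\le6$ and $\adw(G)\ge6$ separately.

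\emph{Upper bound.} Let $h$ be an edge dominated modular polymatroid on $G$. Its restriction to $X$ is an edge dominated modular polymatroid on $G_0$, because every central edge is an edge of $G$. Lemma~\ref{lem:modular-core} then gives $M(h)\le1$. Lemma~\ref{lem:decomposition} yields $\hwd_h(G)\le\max(6,5+M(h))\le6$. Taking the supremum over all such $h$ gives $\adw(G)\le6$.

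\emph{Lower bound.} Take the uniform modular polymatroid $h(B)=|B|/2$. Every edge of $G$ has value $1$, so $h$ is edge dominated, and it suffices to show that every tree decomposition of $G$ has a bag with at least $12$ vertices. Fix a tree decomposition and let $B$ be the bag given by Lemma~\ref{lem:bramble}. Then $B\cap X\supseteq U_i$ for some $i$, and $B$ meets every pair $K_{a,k}$. We count $|B\setminus X|$ by examining each anchor group $A_k$. If $A_k\subseteq B$, then $A_k$ contributes $2$ vertices. Otherwise some anchor $a\in A_k$ is missing from $B$. For $B$ to meet each $K_{a,\ell}=\{a,p_{k,\ell}\}$, all $r_k\ge3$ connectors $p_{k,\ell}$ must lie in $B$. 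Hence each group contributes at least $\min(2,r_k)=2$ vertices from $A_k\cup\{p_{k,\ell}\}$. These sets are pairwise disjoint across groups, so $|B\setminus X|\ge10$. Since every $U_i$ has at least two elements, $|B\cap X|\ge2$, and therefore $|B|\ge12$ and $h(B)\ge6$. Thus $\hwd_h(G)\ge6$, so $\adw(G)\ge6$.

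The only delicate point is the per-group count: the vertices counted for different groups must be distinct, and this holds because the anchor groups and connector sets are disjoint by construction. Everything else follows directly from the earlier lemmas, so this lemma is essentially a bookkeeping step.
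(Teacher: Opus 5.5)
Your proof is correct and follows the paper's own argument essentially step for step. The upper bound restricts $h$ to $X$ and combines Lemma~\ref{lem:modular-core} with Lemma~\ref{lem:decomposition}. The lower bound uses $h(B)=|B|/2$ and the bramble bag of Lemma~\ref{lem:bramble}, counting at least two vertices per anchor group plus $|B\cap X|\ge2$ to get $|B|\ge12$.
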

\begin{proof}
For the upper bound, let $h$ be an edge dominated modular polymatroid
on $G$.  Its restriction to the nodes $X$ of the central graph $G_0$
is again modular and edge dominated.  Lemma~\ref{lem:modular-core}
gives $\min_i h(U_i)\le1$, while Lemma~\ref{lem:decomposition} implies
\[
\hwd_h(G)\le\max\{6,5+1\}=6.
\]
Taking the supremum proves $\adw(G)\le6$.

For the lower bound, let $h(B)=|B|/2$ for every $B\subseteq V(G)$.
This is a modular polymatroid, and it is edge dominated since every
edge has value $1$.  Lemma~\ref{lem:bramble} gives a bag $B$ whose
central part contains some $U_i$ and which meets every
$K_{a,k}$.  Thus $|B\cap X|\ge2$.  For each anchor
group $A_i$, either $B$ contains both anchors, or an omitted anchor
forces $B$ to contain all $r_i\ge3$ connectors of that group.  Hence
$B$ contains at least two vertices from each of the five groups of
anchors and connectors.  Consequently,
\[
|B|\ge2+5\cdot2=12,\qquad h(B)\ge6.
\]
Therefore $\adw(G)\ge\hwd_h(G)\ge6$.  This proves the lemma.
\end{proof}

\subsection{Extending a Central Witness to the Big Graph}

Given any function $g_0:2^X\to\Rp$ on the central graph, extend it
to all vertices of $G$ by
\begin{equation}\label{eq:witness}
F_{g_0}(B)=g_0(B\cap X)+
\sum_{i=1}^5\frac1{2r_i}\sum_{a\in A_i}\sum_{k=1}^{r_i}
\mathbf1_{\{B\cap K_{a,k}\ne\varnothing\}}.
\end{equation}

We notice that the sum we added to $g_0$ is a normal polymatroid,
because it is a positive linear combination of step functions
(Section~\ref{sec:basic}).  This implies that, if $g_0$ is a normal, or
linear, or almost entropic, or general polymatroid, then so is $F_{g_0}$.
Each of the four lower bounds below is obtained by substituting for
$g_0$ the corresponding witness on the central graph from
Section~\ref{sec:core-bounds}.  We record conditions for \eqref{eq:witness} to be edge dominated
and a lower bound on its value on a bag of every tree decomposition.  Note that
$F_{g_0}(B)=g_0(B)$ for $B\subseteq X$, since every $K_{a,k}$ is
disjoint from $X$.

\begin{lemma}\label{lem:feasible}
Suppose that $g_0(e)\le1$ for every $e\in E_0$, that $g_0(X_i)\le2/3$
for $i=1,\ldots,4$, and that $g_0(X_5)\le3/4$.  Then $F_{g_0}$ is edge
dominated on $G$.
\end{lemma}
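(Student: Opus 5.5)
The plan is to check edge domination separately for the three kinds of edges of $G$: central edges, anchor edges, and connector edges. For an edge $e$, we write $F_{g_0}(e)=g_0(e\cap X)+S(e)$. Here $S(e)$ is the scaffolding sum in \eqref{eq:witness}, namely the weighted number of pairs $K_{a,k}$ that $e$ meets, each pair of group $i$ weighted $\frac1{2r_i}$. The key bookkeeping fact is the weight attached to a single vertex.
\begin{itemize}
\item An anchor $a\in A_i$ lies in exactly the $r_i$ pairs $K_{a,1},\ldots,K_{a,r_i}$, so it contributes $r_i\cdot\frac1{2r_i}=\frac12$.
\item A connector $p_{i,k}$ lies in exactly the two pairs $K_{a,k}$ with $a\in A_i$, so it contributes $\frac2{2r_i}=\frac1{r_i}$.
\end{itemize}

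\textbf{Central edges.} If $e\in E_0$, then $e$ meets no $K_{a,k}$. Hence $F_{g_0}(e)=g_0(e)\le1$ by hypothesis.

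\textbf{Anchor edges.} If $e=\{a,a'\}$ with $a,a'$ in distinct groups, the pairs containing $a$ and those containing $a'$ are disjoint. Since $e\cap X=\varnothing$, we get $F_{g_0}(e)=\frac12+\frac12=1$.

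\textbf{Connector--anchor edges.} If $e=\{p_{i,k},a\}$ with $a\in A_i$, the pairs met by $e$ are the $r_i$ pairs $K_{a,\ell}$ together with $K_{a',k}$ for the other anchor $a'\in A_i$. That is $r_i+1$ pairs. The value is therefore $\frac{r_i+1}{2r_i}=\frac12+\frac1{2r_i}\le\frac12+\frac16<1$.

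\textbf{Connector--central edges.} If $e=\{p_{i,k},x\}$ with $x\in U_i$, then $F_{g_0}(e)=g_0(x)+\frac1{r_i}$. This is the only case that uses the singleton hypotheses, and the choice $(r_i)=(3,3,3,4,4)$ is exactly what makes it work, so I will verify it group by group.
\begin{itemize}
\item For $i\in\{1,2,3\}$ we have $r_i=3$ and $U_i\subseteq\{X_1,X_2,X_3,X_4\}$. Then $g_0(x)\le\frac23$, and the value is at most $\frac23+\frac13=1$.
\item For $i\in\{4,5\}$ we have $r_i=4$ and $U_i=\xs{15}$ or $\xs{25}$. If $x=X_5$, then $g_0(X_5)+\frac14\le\frac34+\frac14=1$. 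If $x$ is $X_1$ or $X_2$, then the value is at most $\frac23+\frac14<1$.
\end{itemize}

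These four cases exhaust $E(G)$ by the construction in Section~\ref{sec:graph}, so $F_{g_0}$ is edge dominated. The only delicate step is the connector--central case. In that case the weights $\frac1{r_i}$ must be matched precisely against the singleton bounds $\frac23$ and $\frac34$.
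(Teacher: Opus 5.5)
Your proof is correct and follows essentially the same route as the paper: the same per-vertex bookkeeping (an anchor contributes $\frac12$, a connector $\frac1{r_i}$) and the same four edge types, with the connector--central case split by $r_i$. The only differences are cosmetic. You explicitly account for the shared pair $K_{a,k}$ in the anchor--connector case, and for $i\in\{4,5\}$ you use the sharper bound $\frac23$ on $X_1,X_2$, whereas the paper simply bounds every $x\in\xs{125}$ by $\frac34$.
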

\begin{proof}
Central edges have value $g_0(e)\le1$.  Each anchor meets
all $r_i$ coverage sets belonging to it, for total value $1/2$;
an anchor--anchor edge therefore has value one. A connector in group
$i$ meets two coverage sets, one for each anchor, for value $1/r_i$.
Consequently an anchor--connector edge has value
$1/2+1/(2r_i)\le1$.

A central--connector edge $\{x,p_{i,k}\}$ has value $g_0(x)+1/r_i$,
where $x\in U_i$.  For $i=1,2,3$ we have $r_i=3$ and
$U_i\subseteq\xs{1234}$, so this is at most $2/3+1/3=1$; for $i=4,5$ we
have $r_i=4$ and $U_i\subseteq\xs{125}$, so it is at most
$3/4+1/4=1$.  These are all edge types.
\end{proof}

\begin{lemma}\label{lem:witness-bound}
For every monotone $g_0:2^X\to\Rp$, every tree decomposition of $G$
has a bag $B$ such that
\[
F_{g_0}(B)\ge\min_i g_0(U_i)+5.
\]
\end{lemma}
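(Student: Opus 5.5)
The plan is to apply Lemma~\ref{lem:bramble} to the given tree decomposition and evaluate $F_{g_0}$ on the bag it produces. Fix an arbitrary tree decomposition of $G$. Lemma~\ref{lem:bramble} provides a bag $B$ with two properties: $B\cap X\supseteq U_i$ for some $i$, and $B$ meets every anchor--connector pair $K_{a,k}$. We then bound the two summands of \eqref{eq:witness} separately.

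The central summand is $g_0(B\cap X)$. Because $g_0$ is monotone and $U_i\subseteq B\cap X$, we get $g_0(B\cap X)\ge g_0(U_i)\ge\min_{i'}g_0(U_{i'})$.

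The scaffolding summand needs no case analysis. Since $B$ meets every $K_{a,k}$, each indicator $\mathbf 1_{\{B\cap K_{a,k}\neq\varnothing\}}$ equals $1$. The double sum therefore becomes
\[
\sum_{i=1}^5\frac{1}{2r_i}\cdot|A_i|\cdot r_i=\sum_{i=1}^5\frac{2r_i}{2r_i}=5 .
\]
Adding the two summands gives $F_{g_0}(B)\ge\min_i g_0(U_i)+5$, which is the claim.

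I expect no real obstacle, because the combinatorial work (the bramble and Helly argument) is already contained in Lemma~\ref{lem:bramble}. What remains is to check two small points. First, monotonicity is the only property of $g_0$ that is used. Second, the weights $1/(2r_i)$ in \eqref{eq:witness} are exactly the ones that make each anchor group contribute $1$ once every pair is hit.
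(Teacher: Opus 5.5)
Your proposal is correct and follows the paper's proof: you take the bag given by Lemma~\ref{lem:bramble}, bound the central summand by monotonicity of $g_0$, and observe that every indicator in \eqref{eq:witness} equals $1$. Your per-group count (each group contributes $2r_i\cdot\frac{1}{2r_i}=1$) is the paper's count of ten anchors contributing $1/2$ each, grouped differently.
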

\begin{proof}
Let $B$ be a bag given by Lemma~\ref{lem:bramble}.
Meeting every $K_{a,k}$ makes every coverage term in
\eqref{eq:witness} attain its full value. There are ten anchors, each
contributing $1/2$, for a total of five.  Since $g_0$ is monotone,
$g_0(B\cap X)\ge g_0(U_i)\ge\min_i g_0(U_i)$, and the displayed bound
follows.
\end{proof}

\subsection{The Exact Normal Width}

\begin{lemma}\label{lem:exact-normw}
The $32$-vertex graph $G$ satisfies $\normw(G)=69/11$.
\end{lemma}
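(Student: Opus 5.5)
The proof splits into an upper bound and a matching lower bound, both reducing to the central-graph bounds of Theorem~\ref{thm:central} through the scaffolding lemmas.

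\emph{Upper bound.} Let $h$ be an edge dominated normal polymatroid on $G$. Its restriction to $X$ is again a nonnegative combination of step functions: $h^K$ restricts to $h^{K\cap X}$, and $h^\emptyset=0$. The restriction is also edge dominated on $G_0$, since $G_0$ is an induced subgraph of $G$. Lemma~\ref{lem:normal-core} then gives $M(h)\le 14/11$, and Lemma~\ref{lem:decomposition} gives
\[
\hwd_h(G)\le\max\{6,5+14/11\}=69/11 .
\]
Taking the supremum over $h$ yields $\normw(G)\le 69/11$.

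\emph{Lower bound.} Take the central witness $g_N$ of Lemma~\ref{lem:normal-tight} and form $F_{g_N}$ by \eqref{eq:witness}. The added sum is a nonnegative combination of step functions on $V(G)$. The term $g_N(B\cap X)$ is also normal on $V(G)$, because $h^K(B\cap X)=h^K(B)$ for $K\subseteq X$. Hence $F_{g_N}$ is a normal polymatroid. For edge domination, Lemma~\ref{lem:normal-tight} gives $g_N(e)\le1$ on $E_0$ and the singleton bounds \eqref{eq:normal-singletons}: $6/11,7/11<2/3$ for $X_1,\dots,X_4$, and $8/11<3/4$ for $X_5$. These are exactly the hypotheses of Lemma~\ref{lem:feasible}, so $F_{g_N}$ is edge dominated on $G$. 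Since $g_N$ is monotone, Lemma~\ref{lem:witness-bound} shows that every tree decomposition has a bag $B$ with $F_{g_N}(B)\ge 5+\min_i g_N(U_i)=5+14/11=69/11$. Therefore $\normw(G)\ge\hwd_{F_{g_N}}(G)\ge 69/11$.

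The only point needing care is confirming that normality is preserved under both operations: restriction to $X$ in the upper bound, and pullback along $B\mapsto B\cap X$ in the lower bound. Both follow directly from the step-function description, so no step is a real obstacle.
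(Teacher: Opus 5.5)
Your proposal is correct and follows the paper's proof essentially step for step. The upper bound restricts to $X$ and applies Lemma~\ref{lem:normal-core} and Lemma~\ref{lem:decomposition}; the lower bound uses the extension $F_{g_N}$ with Lemmas~\ref{lem:feasible} and~\ref{lem:witness-bound}. The only cosmetic difference is that you justify normality of the restriction via step functions ($h^K\mapsto h^{K\cap X}$) rather than by deleting variables from a coverage function, which is equivalent.
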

\begin{proof}
  For the upper bound, let $h$ be an edge-dominated normal polymatroid
  on $G$.  Its restriction to the nodes $X$ of the central graph $G_0$
  is again normal, since deleting the variables outside $X$ from a
  coverage function leaves a coverage function, and it is edge
  dominated on $(X,E_0)$.  So Theorem~\ref{thm:central} (more
  precisely, Lemma~\ref{lem:normal-core}) gives
  $\min_i h(U_i)\le14/11$, while Lemma~\ref{lem:decomposition} implies
\[
\hwd_h(G)\le\max\left\{6,\ 5+\frac{14}{11}\right\}=\frac{69}{11},
\]
since $69/11>6$.  Taking the supremum proves $\normw(G)\le69/11$.

For the lower bound, let $F_N:=F_{g_N}$ be the extension
\eqref{eq:witness} of the normal witness $g_N$ of
\eqref{eq:normal-witness}.
This is normal.  Indeed, $g_N$ is a nonnegative combination of step
functions on $X$ by \eqref{eq:normal-witness}, and for $K\subseteq X$
the extension $B\mapsto h^K(B\cap X)$ is again the step function $h^K$,
now on $V(G)$; each term
$\mathbf1_{\{B\cap K_{a,k}\ne\varnothing\}}$ is the step function
$h^{K_{a,k}}$; and a nonnegative combination of step functions is
normal.

$F_N$ is edge dominated by Lemma~\ref{lem:feasible}: central edges have
value $g_N(e)\le1$ by Lemma~\ref{lem:normal-tight}, and by
\eqref{eq:normal-singletons} the singletons satisfy $g_N(X_i)\le7/11<2/3$
for $i=1,\ldots,4$ and $g_N(X_5)=8/11<3/4$.

Lemma~\ref{lem:witness-bound} now gives a bag $B$ with
\[
F_N(B)\ \ge\ \min_i g_N(U_i)+5=\frac{14}{11}+5=\frac{69}{11},
\]
and $\normw(G)\ge\hwd_{F_N}(G)\ge69/11$.
\end{proof}

Comparing with Lemma~\ref{lem:exact-adw},
\[
\normw(G)-\adw(G)=\frac{69}{11}-6=\frac3{11}>0,
\]
so $G$ separates adaptive width from normal width.
The normal witness $g_N$ also violates the modular identity
\eqref{eq:modular-core}: its left-hand side is
$
g_N(U_3)+g_N(U_4)+g_N(U_5)=\frac{42}{11},
$
while its right-hand side is
$
g_N(\xs{12})+g_N(\xs{35})+g_N(\xs{45})
=\frac{32}{11}.
$

\subsection{The Exact Linear Width}
\label{sec:linear-upper}

\begin{lemma}\label{lem:exact-linw}
The $32$-vertex graph $G$ satisfies $\linw(G)=44/7$.
\end{lemma}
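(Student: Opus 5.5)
The proof follows the template of Lemma~\ref{lem:exact-normw}, with the normal ingredients replaced by their linear counterparts from Section~\ref{subsec:shannon-core}.

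\emph{Upper bound.} Let $h$ be an edge dominated linear polymatroid on $G$. First we check that its restriction to $X$ is again linear. For a rank function, restricting the coverage function $\varphi$ to $X$ gives the rank function of the restricted coverage function. Restriction also commutes with nonnegative scaling and with limits, so the restriction of $h$ lies in $\text{Lin}_X$. The restriction is clearly edge dominated on $G_0$. Lemma~\ref{lem:ingleton} then gives $M(h)=\min_i h(U_i)\le 9/7$. Lemma~\ref{lem:decomposition} yields
\[
\hwd_h(G)\le\max\{6,\,5+9/7\}=44/7,
\]
since $44/7>6$. Taking the supremum over $h$ gives $\linw(G)\le 44/7$.

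\emph{Lower bound.} Let $F_L:=F_{g_L}$ be the extension \eqref{eq:witness} of the linear witness $g_L$ from Lemma~\ref{lem:ingleton-tight}. We need to show that $F_L$ is linear, and this is the only step requiring real care. The paper remarks that linear plus normal is linear, since linear polymatroids form a convex cone. To make this explicit, take the direct sum of vector spaces: $g_L=\frac1{14}r$, where $r$ is realized in $\R^{18}$. The step functions $h^{K_{a,k}}$ are realized by a fresh basis vector $e_{a,k}$ placed in the coverage sets of both $a$ and $p_{i,k}$, so that $\dim\Span$ counts exactly the pairs met. Central vertices receive their $V_i$, and the remaining vertices receive vectors from the other summand. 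The weights $\frac1{2r_i}$ are rational, so we scale everything to a common denominator, taking integer multiplicities of each fresh vector via repeated independent copies. This gives a single rank function times a constant, which equals $F_L$. Here we use that the rank of a direct sum is the sum of the ranks, and that $B\mapsto g_L(B\cap X)$ is realized by assigning empty coverage sets to non-central vertices.

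\emph{Conclusion.} Edge domination of $F_L$ follows from Lemma~\ref{lem:feasible}. Every central edge has value $g_L(e)=1$, and by \eqref{eq:linear-singletons} we have $g_L(X_i)\le 9/14<2/3$ for $i\le 4$ and $g_L(X_5)=5/7<3/4$. The function $g_L$ is monotone, so Lemma~\ref{lem:witness-bound} gives, in every tree decomposition, a bag $B$ with
\[
F_L(B)\ge 9/7+5=44/7.
\]
Hence $\linw(G)\ge\hwd_{F_L}(G)\ge 44/7$, which matches the upper bound.
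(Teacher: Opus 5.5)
Your proposal is correct and follows the paper's proof essentially step for step: the upper bound comes from restricting to $X$ and applying Lemma~\ref{lem:ingleton} and Lemma~\ref{lem:decomposition}, and the lower bound from the extension $F_{g_L}$ together with Lemma~\ref{lem:feasible} and Lemma~\ref{lem:witness-bound}. Your direct-sum realization of $F_L$ is the paper's identity $168\,F_L=12\,r+28\sum_{i\le3}(\cdots)+21\sum_{i\ge4}(\cdots)$; note that clearing denominators also requires $12$ copies of the central configuration, not only repeated copies of the fresh vectors $e_{a,k}$.
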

\begin{proof}
  For the upper bound, let $h\in\text{Lin}_{32}$ be edge
  dominated on $G$. Its restriction to the nodes $X$ of the central
  graph $G_0$ belongs to $\text{Lin}_X$ and is edge dominated
  on $(X,E_0)$. Lemma~\ref{lem:ingleton} gives $\min_i h(U_i)\le9/7$.
  Lemma~\ref{lem:decomposition} implies
\[
\hwd_h(G)\le\max\left\{6,\ 5+\frac97\right\}=\frac{44}{7},
\]
since $44/7>6$.  Taking the supremum proves $\linw(G)\le44/7$.

For the lower bound, let $F_L:=F_{g_L}$ be the
extension~\eqref{eq:witness} of the linear witness $g_L=r/14$ of
Lemma~\ref{lem:ingleton-tight}.  This is linear.  Indeed $g_L$ is a
linear polymatroid on $X$, and extending it by $B\mapsto g_L(B\cap X)$
keeps it one, assigning the empty set of vectors to every vertex
outside $X$; each coverage term is normal, hence linear; and clearing
denominators,
\[
168\,F_L=12\,r+28\sum_{i\le3}\sum_{a\in A_i}\sum_k
\mathbf1_{\{\,\cdot\,\cap K_{a,k}\ne\varnothing\}}
+21\sum_{i\ge4}\sum_{a\in A_i}\sum_k
\mathbf1_{\{\,\cdot\,\cap K_{a,k}\ne\varnothing\}}
\]
is a nonnegative \emph{integer} combination of rank functions, hence
itself the rank function of the direct sum of the corresponding vector
configurations.  So $F_L=\frac1{168}r'$ with $r'$ a rank function, and
$F_L\in\text{Lin}_{32}$.

$F_L$ is edge dominated by Lemma~\ref{lem:feasible}: central edges have
value $g_L(e)\le1$ by Lemma~\ref{lem:ingleton-tight}, and by
\eqref{eq:linear-singletons} the singletons satisfy $g_L(X_i)\le9/14<2/3$
for $i=1,\ldots,4$ and $g_L(X_5)=5/7<3/4$.

Lemma~\ref{lem:witness-bound} now gives a bag $B$ with
\[
F_L(B)\ \ge\ \min_i g_L(U_i)+5=\frac97+5=\frac{44}{7},
\]
giving $\linw(G)\ge\hwd_{F_L}(G)\ge44/7$.
\end{proof}

Comparing with Lemma~\ref{lem:exact-normw},
\[
\linw(G)-\normw(G)=\frac{44}{7}-\frac{69}{11}=\frac{484-483}{77}
=\frac1{77}>0,
\]
so $G$ separates normal width from linear width.  The linear witness
also violates the covering inequality \eqref{eq:normal-core}: the
configuration \eqref{eq:linear-witness} has
$r(\xs{123})+r(\xs{124})+5r(\xs{34})+2r(\xs{15})+2r(\xs{25})=198$ while
$2r(\xs{13})+2r(\xs{14})+2r(\xs{23})+2r(\xs{24})+3r(\xs{35})+3r(\xs{45})=196$.

\subsection{The Entropic Width}

\begin{lemma}\label{lem:entw-bounds}
The $32$-vertex graph $G$ satisfies
\[
\frac{635}{101}\le\entw(G)\le\frac{183}{29}.
\]
\end{lemma}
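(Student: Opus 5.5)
The proof follows the same template as Lemmas~\ref{lem:exact-normw} and~\ref{lem:exact-linw}, with the central bounds for almost entropic polymatroids supplied by Lemmas~\ref{lem:zy-core} and~\ref{lem:zy-tight}.

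For the upper bound, let $h\in\bar\Gamma^*_{32}$ be edge dominated on $G$. Its restriction to $X$ is almost entropic. Indeed, marginalizing a joint distribution to the variables in $X$ gives an entropic function on $X$, and restriction commutes with limits, so the closure is preserved. The restriction is also edge dominated on $(X,E_0)$, since $G_0$ is an induced subgraph of $G$. Lemma~\ref{lem:zy-core} then gives $M(h)=\min_i h(U_i)\le 38/29$. Lemma~\ref{lem:decomposition} yields
\[
\hwd_h(G)\le\max\{6,\ 5+38/29\}=183/29,
\]
because $38/29>1$. Taking the supremum over $h$ gives $\entw(G)\le183/29$.

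For the lower bound, I take $F_E:=F_{g_E}$, the extension~\eqref{eq:witness} of the central witness $g_E$ from Lemma~\ref{lem:zy-tight}. The first step is to check that $F_E$ is almost entropic. The map $B\mapsto g_E(B\cap X)$ is almost entropic: assign constant (deterministic) random variables to every vertex outside $X$, apply this to each entropic function approximating $g_E$, and pass to the limit. Each coverage term $\mathbf 1_{\{B\cap K_{a,k}\neq\varnothing\}}$ is the entropy of a single fair bit shared by the two vertices of $K_{a,k}$, with all other vertices constant, so it is entropic. A nonnegative combination of almost entropic functions is again almost entropic: for rational coefficients take independent copies, for real coefficients take limits, and use that $\bar\Gamma^*_n$ is a closed convex cone. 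Hence $F_E\in\bar\Gamma^*_{32}$.

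The second step is edge domination, which follows from Lemma~\ref{lem:feasible}. By~\eqref{eq:core-edges} the central edges have value~$1$. By~\eqref{eq:core-singletons} we have $g_E(X_i)\le 2/3$ for $i\le4$ and $g_E(X_5)=72/101<3/4$. The one nontrivial singleton bound is $(50+10a)/101<2/3$, which reduces to $a<1$.

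The third step is the bag bound. Since $g_E$ is monotone, Lemma~\ref{lem:witness-bound} shows that every tree decomposition has a bag $B$ with $F_E(B)\ge \min_i g_E(U_i)+5$. By~\eqref{eq:core-large}, $\min_i g_E(U_i)=130/101$; the relevant comparison is $(122+10a)/101>130/101$, which holds because $a>4/5$. Therefore $F_E(B)\ge 635/101$, and $\entw(G)\ge\hwd_{F_E}(G)\ge635/101$.

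The only step needing care is membership of $F_E$ in $\bar\Gamma^*$ rather than $\Gamma^*$. I would handle it as in~\eqref{eq:finite-entropy}: multiply by $202$, which makes every coefficient an integer, including $202/(2r_i)$ for $r_i\in\{3,4\}$ once combined with $g_E$'s own denominators. Alternatively, one can appeal directly to the fact that $\bar\Gamma^*$ is a closed convex cone; this suffices because $\entw$ is defined over $\bar\Gamma^*_n$.
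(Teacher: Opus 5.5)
Your proof is correct and follows the paper's argument essentially step by step. For the upper bound you restrict to $X$ and apply Lemma~\ref{lem:zy-core} and Lemma~\ref{lem:decomposition}; for the lower bound you use the extension $F_{g_E}$ together with Lemmas~\ref{lem:feasible} and~\ref{lem:witness-bound}.

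One slip in your last paragraph: multiplying by $202$ does not clear the denominators $2r_i\in\{6,8\}$, since $202/6$ and $202/8$ are not integers. The paper multiplies by $2424=\mathrm{lcm}(202,24)$ instead, which gives $404$ or $303$ shared fair bits per $K_{a,k}$. This does not affect correctness, because your closed-convex-cone argument already shows that $F_E$ is almost entropic.
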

\begin{proof}
For the upper bound, the restriction to $X$ of any edge dominated
almost entropic function on $G$ is almost entropic on $X$ and edge
dominated on $(X,E_0)$, so Lemma~\ref{lem:zy-core} applies.
Lemma~\ref{lem:decomposition} then gives
\[
\hwd_h(G)\le\max\left\{6,\ 5+\frac{38}{29}\right\}
=\frac{183}{29}.
\]
Therefore $\entw(G)\le183/29$.

For the lower bound, let $g_E$ be the almost entropic polymatroid on the central graph
constructed in Lemma~\ref{lem:zy-tight}, and let $F_E:=F_{g_E}$ be its
extension \eqref{eq:witness} to $G$.
Every coverage term is the entropy profile of a random variable
copied to the two vertices in $K_{a,k}$, with constant values at all
other vertices. Choose these variables independently of one another
and of the central graph. Thus $F_E$ is almost entropic.
There is also a direct finite realization after multiplying by $2424$:
take twelve independent copies of the vector in
\eqref{eq:finite-entropy}, and for each $K_{a,k}$ share
$2424/(2r_i)$ fresh independent fair bits between its two vertices.
These numbers are $404$ for $r_i=3$ and $303$ for $r_i=4$.
The resulting entropy vector is exactly $2424F_E$.

By \eqref{eq:core-edges} and \eqref{eq:core-singletons} the function
$g_E$ satisfies the hypotheses of Lemma~\ref{lem:feasible}, so $F_E$ is
edge dominated; and $\min_i g_E(U_i)=130/101$ by \eqref{eq:core-large},
so Lemma~\ref{lem:witness-bound} produces a bag $B$ with
$F_E(B)\ge130/101+5=635/101$.  Hence
$\entw(G)\ge\hwd_{F_E}(G)\ge635/101$. In fact
Lemma~\ref{lem:decomposition} shows $\hwd_{F_E}(G)=635/101$, because
$\min_i F_E(U_i)=130/101$ and $5+130/101=635/101>6$.
\end{proof}

Comparing with Lemma~\ref{lem:exact-linw},
\[
\entw(G)-\linw(G)\ge
\frac{635}{101}-\frac{44}{7}
=\frac1{707}>0,
\]
so $G$ separates linear width from entropic width.
The almost entropic witness $g_E$ also violates the linear rank
inequality \eqref{eq:ing-core}. Indeed, its left-hand side is
\[
g_E(\xs{123})+g_E(\xs{124})+3g_E(\xs{34})
+g_E(\xs{15})+g_E(\xs{25})
=\frac{894+20a}{101}>9,
\]
where the strict inequality follows from $a>4/5$.
Its right-hand side is $9$, since $g_E(e)=1$ for every central
edge.

\subsection{The Exact Submodular Width}

\begin{lemma}\label{lem:exact-subw}
The $32$-vertex graph $G$ satisfies $\subw(G)=19/3$.
\end{lemma}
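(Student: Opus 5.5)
The proof follows the same two-sided template as Lemmas~\ref{lem:exact-normw}, \ref{lem:exact-linw} and~\ref{lem:entw-bounds}.

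\emph{Upper bound.} Let $h\in\Gamma_{32}$ be edge dominated on $G$. Its restriction to $X$ is a polymatroid that is edge dominated on $(X,E_0)$, so Lemma~\ref{lem:subw-core} gives $\min_i h(U_i)\le 4/3$. Lemma~\ref{lem:decomposition} then yields
\[
\hwd_h(G)\le\max\{6,\,5+4/3\}=19/3.
\]
Taking the supremum gives $\subw(G)\le 19/3$.

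\emph{Lower bound.} Let $F_P:=F_{g_P}$ be the extension \eqref{eq:witness} of the witness $g_P=\tfrac13 f_{G_0}$ from Lemma~\ref{lem:subw-tight}.

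First, $F_P$ is a polymatroid. The map $B\mapsto g_P(B\cap X)$ is a polymatroid on $V(G)$, since normalization, monotonicity and submodularity are preserved under $B\mapsto B\cap X$ because intersection distributes over union and intersection. The added coverage sum is a nonnegative combination of step functions, hence normal, and sums of polymatroids are polymatroids.

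Next, $F_P$ is edge dominated, but Lemma~\ref{lem:feasible} does not apply directly. Its hypothesis asks for $g_0(X_5)\le 3/4$, which holds since $2/3<3/4$, and for $g_0(X_i)\le 2/3$ for $i\le 4$, which holds with equality. The central edges have value exactly $1$. So the hypotheses are met and Lemma~\ref{lem:feasible} gives edge domination after all. The only point to watch is that the central--connector edges for $i=1,2,3$ are tight: $2/3+1/3=1$.

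Finally, $g_P$ is monotone, so Lemma~\ref{lem:witness-bound} gives, in every tree decomposition, a bag $B$ with
\[
F_P(B)\ge \min_i g_P(U_i)+5 = 4/3+5 = 19/3.
\]
Hence $\subw(G)\ge\hwd_{F_P}(G)\ge 19/3$.

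The main step to get right is the lower bound. Specifically, one must confirm that extending $g_P$ by $B\mapsto g_P(B\cap X)$ preserves submodularity, and that the tight singleton values $2/3$ still meet the edge-domination hypotheses. Both reduce to the checks above, so the proof should be short.
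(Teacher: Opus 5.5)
Your proof is correct and follows the paper's argument: Lemma~\ref{lem:subw-core} together with Lemma~\ref{lem:decomposition} for the upper bound, and the extension $F_{g_P}$ of $g_P=\tfrac13 f_{G_0}$ via Lemmas~\ref{lem:feasible} and~\ref{lem:witness-bound} for the lower bound. Your opening remark that Lemma~\ref{lem:feasible} ``does not apply directly'' is a slip, and you correct it in the next sentences: its hypotheses do hold, with $g_P(X_i)=2/3$ for $i\le 4$ and $g_P(X_5)=2/3<3/4$.
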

\begin{proof}
  For the upper bound, Lemma~\ref{lem:subw-core} and
  Lemma~\ref{lem:decomposition} give
  $\hwd_h(G)\le\max\{6,5+4/3\}=19/3$ for every edge-dominated
  polymatroid $h$, proving $\subw(G)\le19/3$.

For the lower bound, let $g_P$ be the edge dominated polymatroid
of Lemma~\ref{lem:subw-tight}: it satisfies
$g_P(e)=1$ for $e\in E_0$, and $g_P(B)=4/3$ whenever $B$ is contained
in no central edge, so $g_P(U_i)=4/3$ for every $i$.

Let $F_P:=F_{g_P}$ be the extension \eqref{eq:witness} of $g_P$.
This is a polymatroid, being a nonnegative combination of $g_P$ and
the coverage functions $B\mapsto\mathbf1_{\{B\cap K\ne\varnothing\}}$,
and it is edge dominated by Lemma~\ref{lem:feasible}, since
$g_P(e)=1$ for $e\in E_0$ and $g_P(X_i)=2/3$ for every $i$.

Lemma~\ref{lem:witness-bound} now gives a bag $B$ with
\[
F_P(B)\ \ge\ \min_i g_P(U_i)+5=\frac43+5=\frac{19}{3}.
\]
This proves the matching lower bound.
\end{proof}

Comparing with Lemma~\ref{lem:entw-bounds},
\[
\subw(G)-\entw(G)\ge
\frac{19}{3}-\frac{183}{29}=\frac2{87}>0,
\]
so $G$ separates entropic width from submodular width.
The polymatroid witness $g_P$ also violates
inequality \eqref{eq:zy-core}. Its left-hand side is
$
5g_P(\xs{123})+5g_P(\xs{124})+11g_P(\xs{34})
+4g_P(\xs{15})+4g_P(\xs{25})
=\frac{116}{3}>38.
$
Its right-hand side is $38$, since $g_P(e)=1$ for every central
edge.

\subsection{Proof of the Separation Theorem}

\begin{proof}[Proof of Theorem~\ref{thm:main}]
Let $G$ be the $32$-vertex graph defined in
Section~\ref{sec:graph}.
Lemmas~\ref{lem:exact-adw}, \ref{lem:exact-normw},
\ref{lem:exact-linw}, \ref{lem:entw-bounds}, and
\ref{lem:exact-subw} give, respectively,
\[
\begin{gathered}
\adw(G)=6,\qquad
\normw(G)=\frac{69}{11},\qquad
\linw(G)=\frac{44}{7},\\
\frac{635}{101}\le\entw(G)\le\frac{183}{29},\qquad
\subw(G)=\frac{19}{3}.
\end{gathered}
\]
The strict separations follow from
\[
\begin{aligned}
\frac{69}{11}-6&=\frac3{11}>0,&
\frac{44}{7}-\frac{69}{11}&=\frac1{77}>0,\\
\frac{635}{101}-\frac{44}{7}&=\frac1{707}>0,&
\frac{19}{3}-\frac{183}{29}&=\frac2{87}>0.
\end{aligned}
\]
This proves the theorem.
\end{proof}

\paragraph{Computer-aided Validation.}
This proves the separation for the $32$-vertex graph specified here.
The proof is algebraic and combinatorial; the companion script
\texttt{verify.py} independently checks the entropy
table, edge feasibility, every displayed decomposition, the
bramble incidences, every bag value drawn in
Figure~\ref{fig:td-concrete}, all three information-inequality certificates
\eqref{eq:ing-cert}, \eqref{eq:zy-cert} and \eqref{eq:shannon-core},
the equivalence of \eqref{eq:ing}--\eqref{eq:zy} with their usual
formulations, the atom-by-atom check behind \eqref{eq:normal-core}
together with the normal witness \eqref{eq:normal-witness}, every rank
of the subspace arrangement \eqref{eq:linear-witness}, and the exact
normal-, linear- and submodular-width witnesses.  It also parses the
Ti\emph{k}Z source of Figure~\ref{fig:graph32} and checks that the
edge set actually drawn there is exactly $E(G)$, and that the pentagon
claim made in its caption holds.  Finally, it confirms by linear
programming that the bounds $14/11$, $9/7$ and $4/3$ cannot be
improved: they are the exact optima of $\max_h\min_i h(U_i)$ over the
edge dominated normal polymatroids on $G_0$, over the edge dominated
polymatroids satisfying every instance of \eqref{eq:ing}, and over all
edge dominated polymatroids, respectively, and the witnesses $g_N$,
$g_L$ and $g_P$ attain them.  It also confirms that $38/29$ cannot be
improved over the Zhang--Yeung relaxation; as discussed after
Lemma~\ref{lem:zy-tight}, that leaves the entropic value
undetermined.

\section{Conclusions}
\label{sec:conclusions}

We have separated the notions of adaptive, normal, linear, entropic, and
submodular width by using a single graph $G$ with 32 vertices.  We leave
two open problems.  The first is to study the connection between the
various notions of width and fine-grained complexity lower bounds.
The currently known fine-grained complexity lower bounds
in~\cite{DBLP:conf/icalp/FanKZ23} use a \emph{clique embedding}, which
is a further restriction of a normal polymatroid.  For patterns with
higher complexity, Bringmann and
Gorbachev~\cite{DBLP:conf/stoc/BringmannG25} use a reduction from the
3SUM conjecture, which appears to have similarities with the
linear width.  The question we raise is if there exists a general rule
connecting the notions of width with fine-grained complexity
reductions.

Second, we leave open the question of identifying ``natural'' classes
of graphs where the submodular width coincides with simpler notions of
width.  If matched with appropriate fine-grained reductions, these
restricted classes are the most likely candidates for patterns for which
the exact complexity can be characterized, i.e. the lower and upper
bounds coincide.

\section*{Acknowledgements}

The work of Lanzinger and Merkl was partially supported by the Vienna Science and Technology Fund (WWTF) [10.47379/ICT2201].
Suciu was partially supported by NSF 2507117 and NSF SHF 2312195.

\bibliographystyle{alpha}
\bibliography{bib}

\end{document}